\numberwithin{equation}{section}
\numberwithin{figure}{section}
\theoremstyle{plain}
\newtheorem{theorem}{Theorem}[section]
\newtheorem{lemma}[theorem]{Lemma}
\newtheorem{conjecture}[theorem]{Conjecture}
\newtheorem{corollary}[theorem]{Corollary}
\newtheorem{proposition}[theorem]{Proposition}
\newtheorem{claim}[theorem]{Claim}
\theoremstyle{definition}
\newtheorem{remark}[theorem]{Remark}
\newtheorem{example}[theorem]{Example}
\newtheorem{problem}[theorem]{Problem}
\newcommand{\const}{\mathrm{const}}
\newcommand{\norm}[1]{\left|\!\left|#1\right|\!\right|}
\newcommand{\nnorm}[1]{\left|\!\left|\!\left|#1\right|\!\right|\!\right|}
\newcommand{\ad}{\mathrm{ad}}
\newcommand{\supp}{\mathrm{supp}}
\newcommand{\vol}{\mathrm{vol}}
\newcommand{\Diam}{\mathrm{Diam}}
\newcommand{\moduli}{\mathcal{M}}
\newcommand{\widim}{\mathrm{Widim}}
\newcommand{\dist}{\mathrm{dist}}
\newcommand{\mmdim}{\mathrm{dim}_{\mathrm{M}}}
\newcommand{\tnorm}[1]{\left|\!\left|#1\right|\!\right|_{\mathrm{Tau}}}
\begin{document}

\title[Large dynamics of Yang--Mills theory]
{Large dynamics of Yang--Mills theory: mean dimension formula} 

\author[M. Tsukamoto]{Masaki Tsukamoto}

\subjclass[2010]{58D27, 53C07, 37B99}

\keywords{Yang--Mills gauge theory, mean dimension, metric mean dimension}

\thanks{This paper was supported by Grant-in-Aid for Young Scientists (B) 
25870334 from JSPS}

\date{\today}

\maketitle

\begin{abstract}
This paper studies the Yang--Mills ASD equation over the cylinder as a non-linear evolution equation.
We consider a dynamical system consisting of bounded orbits of this evolution equation.
This system contains many chaotic orbits, and moreover it becomes an infinite dimensional and 
infinite entropy system.
We study the mean dimension of this huge dynamical system.
Mean dimension is a topological invariant of dynamical systems introduced by Gromov.
We prove the exact formula of the mean dimension by 
developing a new technique based on the metric mean dimension theory of Lindenstrauss--Weiss.
\end{abstract}

\section{Introduction}  \label{section: introduction}

\subsection{Main result}  \label{subsection: main result}

This paper explores a large chaotic dynamics of \textbf{Yang--Mills gauge theory}.
Yang--Mills theory is the study of special connections (Yang--Mills connections, ASD connections and 
its perturbations) on principal fiber bundles over manifolds.
Its origin is quantum physics, and it has been intensively
studied in differential/algebraic geometry, low-dimensional topology and representation theory.
Many astonishing results have been obtained for more than 30 years.
But its \textit{dynamical} aspect has been largely neglected.
The purpose of the paper is to reveal a new rich dynamical structure of Yang--Mills theory.

Traditionally most researchers in Yang--Mills theory have been interested in 
highly concentrated special connections called \textbf{instantons}.
Probably this is a reason why dynamical aspect of the theory has not attract their attentions for a long time.
When we look at only concentrated solutions, we don't need a dynamical point of view.
Dynamics appears only when we are interested in a very long term phenomena.
For example, calculating geodesics on Riemannian manifolds is the simplest problem in calculus of variations.
But when we look at very long geodesics (i.e. geodesic flow), we face a complicated dynamical problem.

To explain our viewpoint more concretely,
we recall a familiar picture of instanton Floer homology
(Floer \cite{Floer} and Donaldson \cite{Donaldson}).
Let $Y$ be a closed oriented Riemannian 3-manifold, and we consider the cylinder 
$\mathbb{R}\times Y$ with the product metric. We denote its $\mathbb{R}$-coordinate by $t$.
Let $E$ be a principal $SU(2)$ bundle over $\mathbb{R}\times Y$.
A connection $A$ on $E$ is said to be \textbf{anti-self-dual} (ASD) if its curvature $F_A$ is 
anti-self-dual with respect to the Hodge star operation:
\[ * F_A = -F_A.\]
It is a crucial point in Floer theory that this equation can be expressed as a non-linear \textit{evolution equation}. 
Suppose $A$ is expressed in the temporal gauge, i.e. it has no $dt$-part.
Then the ASD equation becomes
\begin{equation}  \label{eq: ASD equation in the temporal gauge}
 \frac{\partial A(t)}{\partial t} = -*_3 F(A(t)), 
\end{equation}
where $A(t)$ is the restriction of $A$ to the section $\{t\}\times Y$. 
Fixed points of the equation (\ref{eq: ASD equation in the temporal gauge}) are flat connections, and 
connecting orbits between fixed points correspond to instantons.
Floer homology is constructed by using these objects.
Generators of Floer chain complex are flat connections, and the differentials involve
instanton counting.
Therefore we can say that Floer homology uses some dynamics of the evolution equation
(\ref{eq: ASD equation in the temporal gauge}).

But the equation (\ref{eq: ASD equation in the temporal gauge}) also contains more 
complicated dynamical objects other than fixed points and connecting orbits.
Firstly the equation (\ref{eq: ASD equation in the temporal gauge}) admits many periodic orbits. 
Periodic points of period $T>0$ correspond to instantons over $(\mathbb{R}/T\mathbb{Z})\times Y$, and
a lot of such solutions can be constructed by using the gluing theorem of Taubes \cite{Taubes}.
Secondly, and more importantly, 
the above evolution equation contains many chaotic orbits similar to ones in the \textbf{Bernoulli shift} $\{0,1\}^{\mathbb{Z}}$.
This can be shown by using \textit{infinite gluing technique} \cite{Tsukamoto-Nagoya-gluing, Tsukamoto-MPAG} as follows.
Pick up two sufficiently concentrated instantons $A_0$ and $A_1$ over the Euclidean space $\mathbb{R}^4$.
We consider the gluing of infinitely many copies of $A_0$ and $A_1$ over $\mathbb{R}\times Y$.
Take a point $x=(x_n)_n$ in the Bernoulli shift $\{0,1\}^{\mathbb{Z}}$.
For each $n\in \mathbb{Z}$ we glue $A_0$ or $A_1$ in a neighborhood of $\{t=n\}$ 
depending on whether $x_n=0$ or $x_n=1$.
Then, in a rough expression, the resulting ASD connection $A_x$ looks like 
\[ A_x = \cdots \sharp A_{x_{-1}}\sharp A_{x_0} \sharp A_{x_1} \sharp \cdots.\]
The dynamical behavior of $A_x$ imitates
that of the point $x$ in the Bernoulli shift, and it is generically chaotic.

Indeed the dynamics of (\ref{eq: ASD equation in the temporal gauge}) is much more complicated than 
the Bernoulli shift. 
Suppose $A_0$ and $A_1$ admit non-trivial deformation.
Then each $A_{x_n}$ can be deformed. 
So the ASD connection $A_x$ has infinitely many deformation parameters.
This means that the equation (\ref{eq: ASD equation in the temporal gauge}) contains a dynamics 
like $[0,1]^{\mathbb{Z}}$ (the shift action on the \textbf{Hilbert cube}).
$[0,1]^{\mathbb{Z}}$ is an \textit{infinite dimensional} dynamical system of \textit{infinite topological entropy}. 
So this is much larger than the Bernoulli shift.

We have explained that the ASD equation (\ref{eq: ASD equation in the temporal gauge}) 
contains a huge dynamics. 
The purpose of the paper is to develop this unexplored aspect of gauge theory. 
One motivation of this study comes from the work of Gromov \cite{Gromov}.
He introduced a new topological invariant of dynamical systems called \textbf{mean dimension}.
This provides a non-trivial information for infinite dimensional and infinite entropy systems.
For example the $\mathbb{Z}$-action on the Hilbert cube $[0,1]^{\mathbb{Z}}$ has mean dimension $1$.
Mean dimension has been attracting 
researchers in several areas such as topological dynamics 
\cite{Lindenstrauss--Weiss, Lindenstrauss, Gutman, Gutman 2, Lindenstrauss--Tsukamoto, Gutman--Tsukamoto}, 
function theory \cite{Costa, Matsuo--Tsukamoto Brody curve, Tsukamoto-nagoya} 
and operator algebra \cite{Li--Liang, Elliott--Niu}.
We review the definition of mean dimension in Section \ref{subsection: review of mean dimension}.

While the idea of mean dimension is related to various subjects,
Gromov's original motivation is geometric. 
When we study geometric PDE (holomorphic/harmonic maps, complex/minimal subvarieties, etc.)
in a non-compact manifold without any asymptotic boundary condition, we often encounter 
a very large dynamical system (as we have seen above).
Gromov proposed the study of such large dynamical systems from the viewpoint of mean dimension.
Very little has been known in this direction yet.
But here we report one progress of this program in the case of Yang--Mills theory:
We get the exact formula of the mean dimension.
Probably our method can be also applied to other equations.
We will discuss this point again in the end of this subsection.

From now on we concentrate on the simplest case: the 3-manifold $Y$ is the sphere 
$S^3 = \{x_1^2+x_2^2+x_3^2+x_4^2=1\}$ with the standard metric.
Set $X := \mathbb{R}\times S^3$.
The important point is that $X$ (endowed with the product metric) 
is an anti-self-dual manifold  
with a uniformly positive scalar curvature.
Here the anti-self-duality means that the Weyl conformal curvature of $X$ is ASD.
This metrical condition will be used via a certain Weitzenb\"{o}ck formula.
Let $E = X\times SU(2)$ be the product principal $SU(2)$ bundle.
All principal $SU(2)$ bundles over $X$ are isomorphic to the product bundle $E$.
Let $A$ be a connection on $E$.
Its curvature $F_A$ is a 2-form valued in the adjoint bundle $\ad E = X\times su(2)$.
Hence for each point $p\in X$ we can identify $(F_A)_p$ as a linear map 
\begin{equation*} 
 (F_A)_p: \Lambda^2(T_pX)\to su(2).
\end{equation*}
Let $|(F_A)_p|_{\mathrm{op}}$ be its operator norm, and set 
$\norm{F_A}_{\mathrm{op}} := \sup_{p\in X}|(F_A)_p|_{\mathrm{op}}$.

Let $d$ be a non-negative real number.
We define $\moduli_d$ as the space of the gauge equivalence classes of ASD connections $A$ on $E$ satisfying 
\begin{equation} \label{eq: Brody condition}
  \norm{F_A}_{\mathrm{op}} \leq d.
\end{equation} 
This condition (\ref{eq: Brody condition}) means that we consider only 
\textit{bounded} orbits of the evolution equation (\ref{eq: ASD equation in the temporal gauge}).
The space $\moduli_d$ is endowed with the topology of $C^\infty$ convergence over compact subsets:
The sequence $[A_n]$ in $\moduli_d$ converges to $[A]$ if and only if there exist gauge transformations 
$g_n$ satisfying $g_n(A_n)\to A$ in $C^\infty$ over every compact subset of $X$.
The space $\moduli_d$ is compact and metrizable by the Uhlenbeck compactness 
(Uhlenbeck \cite{Uhlenbeck}, Wehrheim \cite{Wehrheim}).

We introduce a continuous action of $\mathbb{R}$ on $\moduli_d$.
This corresponds to the natural time-shift $A(t)\mapsto A(t+s)$ in the 
evolution equation (\ref{eq: ASD equation in the temporal gauge}).
$\mathbb{R}$ acts on $X = \mathbb{R}\times S^3$ by the shift on the $\mathbb{R}$-factor
: $\mathbb{R}\times X\to X$, $(s,(t,\theta))\mapsto (t+s,\theta)$.
This lifts to the action on $E = X\times SU(2)$ by $\mathbb{R}\times E\to E$, 
$(s,(t,\theta, u))\mapsto (t+s,\theta,u)$.
Then $\mathbb{R}$ acts on $\moduli_d$ by 
\begin{equation} \label{eq: action on the moduli}
 \mathbb{R}\times \moduli_d \to \moduli_d, \quad (s,[A])\mapsto [s^*(A)],
\end{equation}
where $s^*(A)$ is the pull-back of $A$ by $s:E\to E$.
We study the dynamics of this action.
This means that we are interested in the asymptotic behavior (as $t\to \pm \infty$) of bounded orbits 
of the evolution equation (\ref{eq: ASD equation in the temporal gauge}).

It is known that $\moduli_d$ for $d<1$ is the one-point space consisting only of the flat connection
(Tsukamoto \cite{Tsukamoto sharp lower bound}).
So this is uninteresting.
But when $d>1$, $\moduli_d$ becomes an infinite dimensional and 
infinite topological entropy system (Matsuo--Tsukamoto \cite{Matsuo--Tsukamoto 2}).
So this is a relevant object of mean dimension theory.
We denote the mean dimension of the action (\ref{eq: action on the moduli})
by $\dim(\moduli_d:\mathbb{R})$.
The mean dimension $\dim(\moduli_d:\mathbb{R})$ is a non-negative real number.
Its rough intuitive meaning is as follows.
Suppose we try to store on computer the orbits of $\moduli_d$ over the time $-T<t< T$ 
up to an error $\varepsilon>0$.
How many memory (/bit) do we need?
It can be estimated by the mean dimension (more precisely \textbf{metric mean dimension}): 
We need at least 
\[ |\log_2 \varepsilon|\, (2T) \dim(\moduli_d:\mathbb{R}) + o(T) \quad (T\to +\infty).\]
This is one expression of a fundamental theorem of Lindenstrauss--Weiss \cite{Lindenstrauss--Weiss}.
See Theorem \ref{thm: metric mean dimension} and discussions around it for more precise explanations.

Our main result is the formula of the mean dimension $\dim(\moduli_d:\mathbb{R})$.
Our formula involves an \textbf{energy density} $\rho(d)$ introduced by Matsuo--Tsukamoto 
\cite{Matsuo--Tsukamoto}.
For $[A]\in \moduli_d$ we define the energy density $\rho(A)$ by 
\begin{equation} \label{eq: energy density}
 \rho(A) := \lim_{T\to +\infty}\left(\frac{1}{8\pi^2 T}\sup_{t\in \mathbb{R}}
  \int_{(t,t+T)\times S^3} |F_A|^2 d\vol\right).
\end{equation}
This limit always exists (Section \ref{subsection: energy density}).
We denote by $\rho(d)$ the supremum of $\rho(A)$ over $[A]\in \moduli_d$.
The energy density $\rho(d)$ is always non-negative and finite.
It is positive for $d>1$ and goes to infinity as $d\to +\infty$
(\cite{Matsuo--Tsukamoto 2}).

The main task of the paper is to prove the upper bound estimate on the mean dimension:
\begin{theorem} \label{thm: main theorem}
\[ \dim(\moduli_d:\mathbb{R})\leq 8\rho(d).\]
\end{theorem}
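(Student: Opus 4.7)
\medskip
\noindent\textbf{Proof plan.} The plan is to pass through metric mean dimension. Fix a metric $\dist$ on $\moduli_d$ inducing its given $C^\infty$-topology on compact subsets, for instance a weighted sum $\dist([A],[B])=\sum_{n\geq 1} 2^{-n}\min(1,\dist_n([A],[B]))$ where $\dist_n$ measures $L^2$-closeness after Uhlenbeck gauge-fixing on $[-n,n]\times S^3$. By the Lindenstrauss--Weiss inequality, $\dim(\moduli_d:\mathbb{R})\leq \mmdim(\moduli_d,\dist)$, so it suffices to show $\mmdim(\moduli_d,\dist)\leq 8\rho(d)$. Unwinding the definition, this reduces to bounding, for each small $\varepsilon>0$ and large $T$, the $\varepsilon$-covering number of $\moduli_d$ with respect to the Bowen metric $\dist_T([A],[B]):=\max_{0\leq s\leq T}\dist(s\cdot[A],s\cdot[B])$.

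For this covering estimate I would localize to a finite cylinder. Choose $R=R(\varepsilon)$ large enough that the restriction of a representative to $I_T:=[-R,T+R]\times S^3$ modulo gauge determines a $\dist_T$-ball of radius $\varepsilon$ up to a controlled error; then it is enough to bound the $\varepsilon$-covering number (in the restricted metric) of $\moduli_d|_{I_T}$. By the definition of the energy density in \eqref{eq: energy density}, every $[A]\in\moduli_d$ satisfies
\[\int_{I_T}|F_A|^2\,d\vol \leq 8\pi^2\bigl(T+2R\bigr)\rho(d)+o(T).\]
Hence $\moduli_d|_{I_T}$ lies in a neighborhood of a union of ASD moduli spaces of local charge $k\leq T\rho(d)+o(T)$. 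The elliptic deformation complex on the positively curved ASD cylinder $X$ (whose $H^2$ vanishes via a Weitzenb\"ock argument using positive scalar curvature and anti-self-duality of the Weyl tensor) has formal dimension $8k+O(1)\leq 8T\rho(d)+o(T)$.

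Since a compact smoothly bounded $D$-dimensional set admits an $\varepsilon$-net of size $\leq C\varepsilon^{-D}$, one obtains an $\varepsilon$-cover of $\moduli_d|_{I_T}$ of cardinality at most $\varepsilon^{-8T\rho(d)-o(T)}$. Taking logarithms, dividing by $T$, sending $T\to\infty$, then dividing by $|\log\varepsilon|$ and sending $\varepsilon\to 0$, yields $\mmdim(\moduli_d,\dist)\leq 8\rho(d)$, as required.

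\medskip
\noindent\textbf{Main obstacle.} The genuinely hard part is the finite-dimensional approximation step. The pointwise bound $\norm{F_A}_{\mathrm{op}}\leq d$ controls no global quantity directly, and $\moduli_d|_{I_T}$ is in general singular, with bubbled ideal boundary, so a naive ``virtual dimension times $\varepsilon^{-1}$'' count is only heuristic. A genuine effective estimate requires a controlled decomposition of each $A|_{I_T}$ into concentrated instanton pieces glued over a diffuse background, uniform Uhlenbeck gauges patching those pieces, and a parameter count sharp enough to produce the coefficient $8$. Converting the formal dimension $8k$ into an actual $\varepsilon$-cover with exponent $8k+o(k)$ uniformly in the concentration and gluing data is where the new technique alluded to in the introduction must enter, and the positivity of the scalar curvature together with anti-self-duality of the Weyl tensor on $X=\mathbb{R}\times S^3$ are exactly what enable the count to close up with the sharp constant.
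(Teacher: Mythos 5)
Your plan correctly identifies the master strategy of the paper, namely passing to metric mean dimension via Lindenstrauss--Weiss and using the Atiyah--Hitchin--Singer index formula to identify $8c_2$ as the formal dimension that produces the coefficient $8$. But what you wrote is a plan, not a proof: your own ``Main obstacle'' paragraph is an accurate inventory of what is missing, and that missing content is essentially the entire body of the paper. Let me be precise about where the proposal stops working as written, and what the paper does instead.

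First, the step ``$\moduli_d|_{I_T}$ lies in a neighborhood of a union of ASD moduli spaces of local charge $k$'' followed by ``a compact smoothly bounded $D$-dimensional set admits an $\varepsilon$-net of size $\leq C\varepsilon^{-D}$'' is not an argument, because the constant $C$ in such a bound depends on the diameter, the local Lipschitz geometry, and the singular set of the space being covered, and $\moduli_d|_{I_T}$ is not compact, not a manifold, and has bubbling and gauge-patching pathologies growing with $T$. To make the covering number come out as $(\const/\varepsilon)^{8k+O(1)}$ with a $\const$ \emph{independent of $T$ and $k$}, one needs a parameterization with uniform elliptic constants, and this is exactly where the new ideas enter. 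The paper sidesteps the restricted moduli space entirely: it maps a piece of $\moduli_d$ into the neighborhood $V_r(A')$ of a genuine finite-energy instanton $A'$ on the full cylinder $X$ (Proposition~\ref{prop: instanton approximation}, proved via Taubes norms and Green-kernel decay), and then counts an $\varepsilon$-separated set in $V_{r_0}(A')$ by parameterizing it with a ball in $H^{1,W}_{A'}$ (Proposition~\ref{prop: quantitative deformation theory}). The index formula gives $\dim H^{1,W}_{A'}=8c_2(A')+3$ and Example~\ref{example: separated set of Banach ball} gives the $(\const/\varepsilon)^{\dim}$ count, but only because the weighted norms yield $D$-uniform estimates.

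Second, the localization step is not as innocent as ``restrict to $I_T$ and lose an $o(T)$.'' Metric mean dimension tolerates splitting $\moduli_d$ into at most $e^{CT}$ pieces (since $\log n$ gets divided by $T$ and then by $|\log\varepsilon|\to\infty$), and a genuine decomposition of this controlled cardinality is needed before instanton approximation can be applied uniformly. The paper (Proposition~\ref{prop: decomposition}) achieves this via a good/bad dichotomy of integer intervals $(n,n+1)$ according to whether $\norm{F_A}_{L^\infty(n,n+1)}\geq\nu$, yielding $\leq 2^{2T+O(1)}$ combinatorial patterns, and then a nontrivial gauge-transformation-gluing lemma (distinguishing overlaps on ``good'' intervals, where irreducibility gives rigidity, from ``bad'' intervals, where one must record an $SU(2)$ gluing parameter) that covers each pattern by a bounded number of pieces. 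Nothing in your proposal plays this role. Finally, the $T$-uniformity of the deformation-theory constants hinges on the weight $W_A(t)\sim e^{\alpha|t-G'(A)|}$ adapted to the good/bad structure: without it, the key inequality $\norm{u}_{L^\infty}\lesssim\nnorm{d_A u}_2$ of Lemma~\ref{lemma: estimate on Omega^0} fails, the continuity method in Lemma~\ref{lemma: continuity method} loses its a priori bound, and the constant $C_0$ in the separated-set count would blow up with $c_2(A')$, destroying the final estimate. In short, the positivity of scalar curvature and anti-self-duality of $W^+$ (via the Weitzenb\"ock formula) do enter, but they are a small part; the new content your plan omits is the good/bad decomposition, the $e^{O(T)}$ cover of $\moduli_d$, the Taubes-norm instanton approximation, and the weighted deformation theory with $T$-uniform constants.
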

The lower bound on the mean dimension was already proved by 
Matsuo--Tsukamoto \cite[Theorem 1.2]{Matsuo--Tsukamoto 2}.
Let $\mathcal{D}\subset [0,+\infty)$ be the set of left-discontinuous points of 
the function $\rho(d)$:
\[ \mathcal{D} = \{d\in [0,+\infty)|\, \lim_{\varepsilon\to +0}\rho(d-\varepsilon) \neq \rho(d)\}.\]
This set is at most countable because $\rho(d)$ is a monotone function.
From \cite[Theorem 1.2]{Matsuo--Tsukamoto 2} (see also Remark \ref{remark: local mean dimension} below)
\begin{equation} \label{eq: lower bound on the mean dimension}
   \dim(\moduli_d:\mathbb{R})\geq 8\rho(d),\quad (d\in [0,+\infty)\setminus \mathcal{D}).
\end{equation}
Therefore we get:
\begin{corollary} \label{cor: main corollary}
For $d\in [0,+\infty)\setminus \mathcal{D}$,
\[ \dim(\moduli_d:\mathbb{R}) = 8\rho(d).\]
\end{corollary}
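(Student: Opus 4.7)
The plan is simply to concatenate two complementary inequalities that are already on the table. The upper bound $\dim(\moduli_d:\mathbb{R}) \leq 8\rho(d)$ is exactly Theorem \ref{thm: main theorem}, the main technical result of the paper; it will be proved via the metric mean dimension machinery of Lindenstrauss--Weiss, and essentially all of the real work sits there. The matching lower bound $\dim(\moduli_d:\mathbb{R}) \geq 8\rho(d)$ for $d \in [0,+\infty)\setminus \mathcal{D}$ is the inequality (\ref{eq: lower bound on the mean dimension}), already established by Matsuo--Tsukamoto \cite[Theorem 1.2]{Matsuo--Tsukamoto 2} through the infinite gluing of instantons sketched in the introduction. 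Combining the two yields the asserted equality at every $d$ outside $\mathcal{D}$.

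I would then take a moment to explain why $\mathcal{D}$ appears at all. The gluing-based lower bound is intrinsically of ``approximation-from-below'' type: for each $d' < d$ it produces an embedded high-dimensional cube inside $\moduli_d$ whose width per unit time is controlled by $\rho(d')$. Letting $d' \to d^-$ therefore delivers $8\lim_{\varepsilon \to +0}\rho(d-\varepsilon)$ as a lower bound, and this coincides with $8\rho(d)$ precisely at the left-continuity points of $\rho$, i.e.\ for $d \notin \mathcal{D}$. Monotonicity and finiteness of $\rho$ on $[0,+\infty)$ ensure that $\mathcal{D}$ is at most countable, so the restriction is harmless for almost every parameter.

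From the perspective of the corollary itself, then, the argument is pure bookkeeping: the only anticipated obstacle is the one hidden inside Theorem \ref{thm: main theorem}, and at this level one simply needs to invoke that theorem together with (\ref{eq: lower bound on the mean dimension}), noting the elementary monotone-function remark that pins down the exceptional set $\mathcal{D}$.
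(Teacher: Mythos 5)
Your proposal is correct and matches the paper's argument exactly: Corollary \ref{cor: main corollary} is obtained by combining the upper bound of Theorem \ref{thm: main theorem} with the lower bound (\ref{eq: lower bound on the mean dimension}) imported from Matsuo--Tsukamoto, the exceptional set $\mathcal{D}$ and its countability coming along with that cited result (see Remark \ref{remark: local mean dimension}). Your added paragraph on why $\mathcal{D}$ arises is a plausible gloss but is not something the paper establishes; it simply inherits the restriction from the cited theorem.
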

Since $\mathcal{D}$ is at most countable, we get the formula of the mean dimension 
$\dim(\moduli_d:\mathbb{R})$ for almost every $d\geq 0$.
This formula can be seen as a dynamical analogue of the pioneering work of 
Atiyah--Hitchin--Singer \cite[Theorem 6.1]{Atiyah--Hitchin--Singer}.
Here we briefly recall their result.
Let $A$ be an irreducible ASD connection on a principal $SU(2)$ bundle $P$ over 
a \textit{compact} ASD 4-manifold $M$ of positive scalar curvature.
Atiyah--Hitchin--Singer calculated the number of the deformation parameters of $A$ by using the 
Atiyah--Singer index theorem. 
The answer is given by  
\[ 8c_2(P) - 3(1-b_1(M)) \> \text{ where } c_2(P) = \frac{1}{8\pi^2}\int_M |F_A|^2d\vol.\]
Corollary \ref{cor: main corollary} is clearly analogous to this dimension formula.
The energy density (\ref{eq: energy density}) is an ``averaged'' second Chern number.

\begin{remark} \label{remark: local mean dimension}
\cite[Theorem 1.2]{Matsuo--Tsukamoto 2} asserts 
\[ \dim_{loc}(\moduli_d:\mathbb{R}) = 8\rho(d), \quad (d\in [0,+\infty)\setminus \mathcal{D}).\]
Here $\dim_{loc}(\moduli_d:\mathbb{R})$ is the \textbf{local mean dimension} of $\moduli_d$.
Local mean dimension is a variant of mean dimension, and it is always a lower bound on the original 
mean dimension. Therefore we get (\ref{eq: lower bound on the mean dimension}).
\end{remark}

Corollary \ref{cor: main corollary} is  the second success of 
non-trivial calculation of mean dimension in geometric analysis.
The first one was found by Matsuo--Tsukamoto \cite[Corollary 1.2]{Matsuo--Tsukamoto Brody curve}.
They proved the formula of the mean dimension of the system of Lipschitz holomorphic curves in the Riemann sphere.
In the case of holomorphic curves the Nevanlinna theory provides a very simple method for obtaining the upper bound on mean dimension
(\cite{Tsukamoto-nagoya}).
So the difficult part of \cite[Corollay 1.2]{Matsuo--Tsukamoto Brody curve} is the proof of the lower bound.
But, in the Yang--Mills case, the upper bound (Theorem \ref{thm: main theorem}) is 
also difficult because we don't have a ``Nevanlinna theory'' for ASD equation.
We need to develop a entirely new technique to obtain the upper bound, and this is the main task of the paper.
The outline of the proof is explained in Section \ref{subsection: ideas of the proof}.
Here we emphasize a key idea of the proof; using \textbf{metric mean dimension}.
Metric mean dimension is a notion introduced by Lindenstrauss--Weiss \cite{Lindenstrauss--Weiss}.
It is a bridge between topological entropy theory and mean dimension theory.
We review its definition in Section \ref{subsection: review of mean dimension}.
In this paper we show that metric mean dimension is a very flexible tool for obtaining a good upper bound 
on mean dimension.
Probably no one has expected that metric mean dimension is useful in geometric analysis.
So this is the most important point of the paper.
Hopefully this idea has a potential to 
be applied to many other problems.
For example, Gromov \cite[Chapter 4]{Gromov} studied a dynamical system consisting of 
complex subvarieties in $\mathbb{C}^n$.
He proved an upper bound on the mean dimension \cite[p. 408, Corollary]{Gromov}.
But his estimate is very crude. 
So he proposed the problem of proving a better bound
\cite[p. 409, Remarks and open questions (a)]{Gromov}.
It seems difficult to reach a good estimate by improving Gromov's argument directly.
Metric mean dimension might shed a new light on this problem.

\subsection{Application to dynamical embedding problem} \label{subsection: application to dynamical embedding problem}

Here we discuss one application of Theorem \ref{thm: main theorem}
in order to illustrate a dynamical importance of mean dimension.
In this subsection we restrict the $\mathbb{R}$-action 
(\ref{eq: action on the moduli}) to the subgroup $\mathbb{Z}\subset \mathbb{R}$, 
and we consider $\moduli_d$ as a space endowed with a continuous action of $\mathbb{Z}$.
The mean dimension $\dim(\moduli_d:\mathbb{Z})$ of this $\mathbb{Z}$-action is equal to 
$\dim(\moduli_d:\mathbb{R})$. So we get (Theorem \ref{thm: main theorem})
\[ \dim(\moduli_d:\mathbb{Z})\leq 8\rho(d).\]

Let $D$ be a natural number, and let $([0,1]^D)^{\mathbb{Z}}$ be the $\mathbb{Z}$-shift on the $D$-dimensional cube
(i.e. the ``$D$-dimensional version'' of the Hilbert cube).
$\mathbb{Z}$ naturally acts on this space, and
its mean dimension is $D$.
The following \textit{embedding problem} is a long-standing question in topological dynamics.
\begin{problem}
Let $M$ be a $\mathbb{Z}$-system, i.e. a compact metric space endowed with a continuous action of $\mathbb{Z}$.
Decide whether there exists a $\mathbb{Z}$-equivariant topological embedding from $M$ into the shift 
$([0,1]^D)^{\mathbb{Z}}$.
\end{problem}

This problem goes back to the Ph.D. thesis of Jaworski \cite{Jaworski} in 1974.
But here we skip the history and present only a current development.
If we can equivariantly embed $M$ into $([0,1]^D)^{\mathbb{Z}}$ then the mean dimension $\dim(M:\mathbb{Z})$ is 
less than or equal to $D$.
Lindenstrauss--Tsukamoto \cite{Lindenstrauss--Tsukamoto} conjectured that the following partial converse 
holds.
\begin{conjecture} \label{conjecture: embedding}
Let $M_n$ $(n\geq 1)$ be the space of periodic points of period $n$ in $M$.
Suppose 
\[ \dim(M:\mathbb{Z}) < \frac{D}{2}, \quad \frac{\dim M_n}{n} < \frac{D}{2} \quad (\forall n\geq 1).\]
Then we can embed $M$ into $([0,1]^D)^{\mathbb{Z}}$ equivariantly.
\end{conjecture}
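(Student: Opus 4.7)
The plan is to attack the conjecture via the Baire-category approach to equivariant embedding developed by Lindenstrauss and refined by Gutman, Lindenstrauss--Tsukamoto, and Gutman--Tsukamoto. Let $T\colon M\to M$ denote the generator of the $\mathbb{Z}$-action. For each $f\in C(M,[0,1]^D)$ form the $\mathbb{Z}$-equivariant continuous map
\[ I_f : M \to ([0,1]^D)^{\mathbb{Z}}, \qquad I_f(x) = \bigl(f(T^n x)\bigr)_{n\in\mathbb{Z}}. \]
Since $M$ is compact, $I_f$ is a topological embedding iff it is injective, so it suffices to show that
\[ \mathcal{E} := \{f\in C(M,[0,1]^D)\mid I_f \text{ is injective}\} \]
is non-empty. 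The natural path is to prove $\mathcal{E}$ is residual in the Banach space $C(M,[0,1]^D)$. Writing $\mathcal{E} = \bigcap_{k\geq 1} U_k$, where $U_k$ is the set of $f$ separating every pair $x,y$ with $\dist(x,y)\geq 1/k$ somewhere along the orbit, each $U_k$ is open, so the real work reduces to proving each $U_k$ is dense.

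Density splits naturally into periodic and aperiodic behaviour. For each fixed $n\geq 1$ the hypothesis $\dim M_n < nD/2$ supplies a classical Menger--N\"obeling transversality perturbation of $f|_{M_n}$: a generic map $M_n \to ([0,1]^D)^n$ is an embedding as soon as $2\dim M_n < nD$, which matches our hypothesis precisely. Since each $M_n$ is closed in $M$ and there are only countably many $n$, Tietze extension together with a Baire intersection packages all these perturbations into a single residual set. This takes care of separating periodic points from each other and, modulo aperiodic trouble, from other orbits. The serious difficulty lies in the aperiodic part, where the mean-dimension hypothesis $\dim(M:\mathbb{Z}) < D/2$ is the only dimensional input available.

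For the aperiodic part I would exploit Gromov's characterization
\[ \dim(M:\mathbb{Z}) = \lim_{\varepsilon\to 0}\; \limsup_{N\to\infty} \frac{\widim_\varepsilon(M,\dist_N)}{N}, \qquad \dist_N(x,y):=\max_{0\leq n<N}\dist(T^n x,T^n y). \]
The hypothesis yields, for every scale $\varepsilon>0$ and all sufficiently large $N$, a simplicial complex $K$ of dimension $<ND/2$ together with an $\varepsilon$-map $M\to K$ in the metric $\dist_N$. One then chooses equivariant \emph{markers}---continuous functions whose zero set is a sparse $\mathbb{Z}$-invariant cross-section of the aperiodic part with return time at least $N$---and uses them to glue local general-position embeddings $K\hookrightarrow([0,1]^D)^N$ into a global equivariant perturbation $f'$ of $f$ that lives in $U_k$. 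The numerical coincidence $2\cdot(ND/2) = ND = \dim([0,1]^D)^N$ is exactly what makes the general-position injection possible, and the factor $1/2$ in the hypothesis is the analogue of the Menger--N\"obeling threshold.

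The principal obstacle will be the marker step in the absence of any a priori structural hypothesis on $M$. Gutman--Tsukamoto settled the conjecture for systems enjoying the \emph{marker property}, but systems exhibiting rich almost-periodic behaviour without genuine periodicity can obstruct the direct construction of markers. Closing this gap is the heart of the open problem. A plausible route is to decompose $M$ into three pieces: a periodic part controlled by the $\dim M_n$ bound from paragraph two, a marker-property factor handled by the existing machinery, and an exceptional almost-periodic remainder whose own mean dimension is strictly less than $D/2$ and for which a bespoke construction tailored to its (Kronecker-like) structure must be designed. Assembling these three pieces into a single equivariant embedding, with mean-dimension and topological-dimension budgets balancing consistently across the decomposition, is the main technical challenge that must be overcome to settle the conjecture.
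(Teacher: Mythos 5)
This statement is Conjecture \ref{conjecture: embedding} of the paper, which the paper explicitly describes as ``widely open'' and does not prove; it is quoted only as motivation for the weaker embedding theorem of Gutman--Tsukamoto that is actually applied to $\moduli_d$. So there is no proof in the paper against which to measure your attempt, and the only question is whether your proposal constitutes a proof on its own. It does not. You correctly identify the standard Baire-category framework (residuality of the set of $f$ with $I_f$ injective, openness of the sets $U_k$, the Menger--N\"obeling threshold $2\dim < D$ as the source of the factor $1/2$), and you correctly identify where the known machinery stops: the construction of markers for the aperiodic part. But you then state explicitly that ``closing this gap is the heart of the open problem'' and that the three-piece decomposition you sketch is ``the main technical challenge that must be overcome.'' A proof proposal whose central step is an acknowledged open problem is a research program, not a proof.

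Two further points. First, even the periodic part is not as routine as your second paragraph suggests: a generic map $M_n\to([0,1]^D)^n$ being an embedding separates period-$n$ points from one another, but one must also separate periodic points from aperiodic points whose orbit segments shadow periodic behaviour for arbitrarily long times, and the perturbations used for different $n$ and for the aperiodic part must be made compatible; this interaction is precisely where the known proofs (Lindenstrauss, Gutman, Gutman--Tsukamoto) require either the marker property or an aperiodic extension. Second, the paper's own route around the difficulty is instructive: rather than proving the conjecture, it invokes \cite[Corollary 1.8]{Gutman--Tsukamoto}, which embeds not $M$ but the product $M\times Z$ with $Z$ an aperiodic subshift --- the factor $Z$ manufactures the markers that your proposal cannot produce for a general $M$. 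If your goal is to obtain Corollary \ref{cor: embedding of moduli}, that is the argument to use; if your goal is the conjecture itself, the gap you have named must actually be closed, and nothing in your proposal does so.
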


Roughly speaking, we conjectured that mean dimension and periodic points are the only essential obstructions to the embedding.
This conjecture itself is widely open, but Gutman--Tsukamoto \cite{Gutman--Tsukamoto}
found that we can solve the problem if we sightly extend the system $M$ by using an aperiodic symbolic subshift.
Let $\{1,2,\dots,l\}^{\mathbb{Z}}$ be the symbolic shift, and let $Z\subset \{1,2,\dots,l\}^{\mathbb{Z}}$
be a subsystem without periodic points. 
We consider the product system $M\times Z$, which naturally admits a $\mathbb{Z}$-action and becomes an extension of the original 
system $M$.
The mean dimension of $M\times Z$ is equal to the mean dimension of $M$.
From \cite[Corollary 1.8]{Gutman--Tsukamoto}, we get:
\begin{theorem}
If the mean dimension $\dim(M:\mathbb{Z})$ is strictly smaller than $D/2$, then
we can embed the product system $M\times Z$ into $([0,1]^D)^{\mathbb{Z}}$ equivariantly.
\end{theorem}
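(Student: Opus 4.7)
The plan is to apply the general equivariant embedding theorem of Gutman--Tsukamoto \cite{Gutman--Tsukamoto}, and to verify that the product system $M\times Z$ satisfies its hypotheses. The core reduction is to construct, for every $\varepsilon>0$, an equivariant continuous $\varepsilon$-embedding $M\times Z\to ([0,1]^D)^{\mathbb{Z}}$; a Baire-category/diagonal argument in the space of equivariant continuous maps then upgrades such approximate embeddings into an honest topological embedding.

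First I would use the hypothesis $\dim(M:\mathbb{Z})<D/2$ to construct finite-dimensional windowed approximations of $M$: for all sufficiently large $N$, the restriction of $M$ to a time window of length $N$ factors, up to metric error $\varepsilon$ in the induced Bowen-type metric, through a simplicial complex of dimension strictly less than $DN/2$. This is essentially the definition of mean dimension combined with standard polyhedral approximation of compact metric spaces.

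Next I would exploit the aperiodicity of $Z$ to obtain Rokhlin-type clopen tower partitions of $M\times Z$ with arbitrarily long levels. Since $Z\subset\{1,\dots,l\}^{\mathbb{Z}}$ has no periodic points, one can locate clopen subsets whose $\mathbb{Z}$-translates cut orbits into blocks of controlled length whose boundaries are detectable purely from the $Z$-coordinate. This tag structure is exactly what the bare system $M$ may lack, and is the reason the embedding problem becomes tractable for the extension $M\times Z$ even in regimes where $M$ alone has many periodic points that could otherwise obstruct the embedding.

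Finally, within each tower level I would apply a classical Menger--N\"obeling type embedding: a compact metric space of dimension strictly less than $Dn/2$ embeds generically into $[0,1]^{Dn}$. The main obstacle is to glue these block-wise embeddings into a \emph{single} continuous equivariant map into $([0,1]^D)^{\mathbb{Z}}$. The key point is that the aperiodic tag simultaneously marks the window boundaries and distinguishes different $\mathbb{Z}$-shifts of the same $M$-orbit, so that a generic equivariant map respecting the tag structure becomes injective on $M\times Z$. Organising this genericity step into a Baire-category argument on a suitable Polish space of equivariant maps, while carrying a quantitative ``signal'' that separates dimensionally-distant points and tracks the approximation error as $\varepsilon\to 0$, is the principal technical hurdle; this is where the strict inequality $\dim(M:\mathbb{Z})<D/2$ (rather than $\leq D$) is essential, since the factor $1/2$ reflects the generic-position dimension doubling in the Menger--N\"obeling argument.
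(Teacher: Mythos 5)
The paper gives no internal proof of this theorem; the ``proof'' is the one-line citation ``From \cite[Corollary 1.8]{Gutman--Tsukamoto}, we get:'' -- the statement is a direct specialization of that corollary, quoted essentially verbatim. Your proposal does something different: it attempts to sketch the internal proof of the Gutman--Tsukamoto result itself. That outline is broadly faithful to the actual strategy -- (i) $\dim(M:\mathbb{Z})<D/2$ yields windowed finite-dimensional factor maps into polyhedra of dimension less than $DN/2$; (ii) aperiodicity of $Z$ supplies Rokhlin-type clopen towers serving as markers that the bare system $M$ may lack; (iii) a Baire-category argument over equivariant maps into $([0,1]^D)^{\mathbb{Z}}$, using Menger--N\"obeling genericity block-wise, produces an actual embedding, and the factor $1/2$ is indeed the $2n+1$ cost of Menger--N\"obeling. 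However, the step you flag as ``the principal technical hurdle'' is in fact the entire content of the cited paper: combining perturbations on different tower levels into a single equivariant map, controlling what happens near Rokhlin-tower boundaries where blocks do not align, and proving density (not merely existence) of $\varepsilon$-embeddings so the $G_\delta$ intersection is nonempty. Compressing that into one paragraph leaves the proposal incomplete as a free-standing proof, although it is accurate as a guide to the cited result. Note also that your opening sentence proposes ``to verify that $M\times Z$ satisfies its hypotheses'' -- but the only hypothesis of Corollary 1.8 is precisely $\dim(M:\mathbb{Z})<D/2$, which is given, so the paper's one-line citation already constitutes a complete proof at the level of rigor the paper intends.
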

Here the condition $\dim(M:\mathbb{Z})<D/2$ is known to be optimal (\cite[Proposition 4.2]{Gutman--Tsukamoto}).
By applying this theorem to $\moduli_d$, we get the following corollary.
\begin{corollary} \label{cor: embedding of moduli}
Suppose $\rho(d)< D/16$. Then $\moduli_d\times Z$ can be $\mathbb{Z}$-equivariantly embedded into $([0,1]^D)^{\mathbb{Z}}$.
\end{corollary}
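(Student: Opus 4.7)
The plan is to combine Theorem \ref{thm: main theorem} with the embedding theorem stated immediately preceding the corollary. First, I would invoke the observation recorded earlier in this subsection that the mean dimension is unchanged when we restrict the $\mathbb{R}$-action on $\moduli_d$ to its subgroup $\mathbb{Z}$; that is, $\dim(\moduli_d:\mathbb{Z}) = \dim(\moduli_d:\mathbb{R})$. Combined with Theorem \ref{thm: main theorem} this immediately yields
\[ \dim(\moduli_d:\mathbb{Z}) \leq 8\rho(d). \]

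Next, I would use the hypothesis $\rho(d) < D/16$, which rearranges to $8\rho(d) < D/2$, so that $\dim(\moduli_d:\mathbb{Z}) < D/2$. Before feeding this into the embedding theorem, I would check the standing hypothesis that $\moduli_d$ is a compact metric space with continuous $\mathbb{Z}$-action: compactness and metrizability of $\moduli_d$ are the content of Uhlenbeck compactness cited in Section \ref{subsection: main result}, and continuity of the time-shift action is clear from the $C^\infty$-on-compacts topology. Applying the embedding theorem with $M = \moduli_d$ then produces a $\mathbb{Z}$-equivariant topological embedding $\moduli_d \times Z \hookrightarrow ([0,1]^D)^{\mathbb{Z}}$, as required.

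There is no genuine obstacle here: the entire mathematical content has already been packaged into Theorem \ref{thm: main theorem} and the Gutman--Tsukamoto embedding theorem, and the corollary is a one-line combination. The only substantive work remaining in the paper is the proof of Theorem \ref{thm: main theorem} itself; Corollary \ref{cor: embedding of moduli} is included purely to advertise a dynamical consequence of that estimate.
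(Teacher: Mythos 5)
Your proposal is correct and is exactly the argument the paper intends: combine the inequality $\dim(\moduli_d:\mathbb{Z})=\dim(\moduli_d:\mathbb{R})\leq 8\rho(d)$ from Theorem \ref{thm: main theorem} with the Gutman--Tsukamoto embedding theorem, noting $8\rho(d)<D/2$. Nothing is missing, and your sanity check on the standing hypotheses (compact metrizable via Uhlenbeck compactness, continuous $\mathbb{Z}$-action) is appropriate even if the paper leaves it implicit.
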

This is a manifestation that the energy density $\rho(d)$ properly controls the size of $\moduli_d$.
If Conjecture \ref{conjecture: embedding} is proved, then we will be able to show that $\moduli_d$ itself can be embedded into 
$([0,1]^D)^{\mathbb{Z}}$ under the same condition $\rho(d)<D/16$.
Here it is worth to point out that we have no idea how to construct concretely the embedding given in Corollary 
\ref{cor: embedding of moduli}.
The above is a pure existence theorem.
It is very interesting to find an explicit construction of such an embedding because
it will give a new way to obtain an upper bound on the mean dimension;
if $\moduli_d\times Z$ can be equivariantly embedded into $([0,1]^D)^{\mathbb{Z}}$, then 
we get $\dim(\moduli_d:\mathbb{Z}) \leq D$.

\subsection{Ideas of the proof} \label{subsection: ideas of the proof}

In this subsection we explain a rough strategy of the proof of Theorem \ref{thm: main theorem}.
Our argument here is intuitive and non-rigorous. 

The most important idea is the use of metric mean dimension as we explained in 
the end of Section \ref{subsection: main result}.
Metric mean dimension is always an upper bound on mean dimension (Theorem \ref{thm: metric mean dimension}).
So we want to estimate the metric mean dimension of $\moduli_d$.
Intuitively this means that we estimate how many memory (/bit) we need in order to 
store on computer the orbits of $\moduli_d$ over the time $-T<t<T$ up to an error $\varepsilon>0$.
We want to know its asymptotics as $T\to \infty$ and $\varepsilon\to 0$.
Our argument has the following three steps.

\textbf{Step 1: Decomposition of $\moduli_d$}. 
We decompose the space $\moduli_d$ into appropriately small pieces:
\[ \moduli_d = U_1 \cup\dots \cup U_n.\]
We try to memorize each $U_i$ separately.
This is an advantage of metric mean dimension over original mean dimension.
Mean dimension does not behave smoothly for a decomposition of a space.
Metric mean dimension is flexible for such a decomposition if we appropriately control the number $n$ of the pieces.
So we can \textit{localize} the argument by using metric mean dimension.

\textbf{Step 2: Instanton approximation}.
The above $U_i$ are infinite dimensional in general.
We construct their finite dimensional approximations by
using the technique of \textbf{instanton approximation}. 
Instanton approximation is an analogue of the famous Runge theorem in complex analysis;
for any meromorphic function in $\mathbb{C}$ and any compact subset $K\subset \mathbb{C}$ 
we can construct a rational function which approximates the given 
function over $K$.
In the same spirit, for any ASD connection $A$ on $E$ and any compact subset $K\subset X$, 
we can construct an instanton (finite energy ASD connection)
which approximates $A$ over $K$.
Instanton approximation technique was first introduced by Taubes \cite{Taubes path} and Donaldson \cite{Donaldson approximation}, 
and it was used by Matsuo--Tsukamoto \cite{Matsuo--Tsukamoto} in the context of mean dimension.
Here we apply this technique to our present situation.
For each $U_i$ we construct a map 
\[ U_i \to V_i, \quad [A]\mapsto [A'], \]
such that $A'$ is an instanton which approximates $A$ over $-T<t<T$.
We can control the energy of $A'$ so that $V_i$ becomes a finite dimensional space.
$V_i$ is a good approximation of $U_i$ over $-T<t<T$.
So we only need to memorize $V_i$ instead of $U_i$.

\textbf{Step 3: Quantitative deformation theory}.
We investigate $V_i$ by constructing a deformation theory of instantons.
Instanton deformation theory is a quite standard subject, but our main emphasis is on its quantitative aspect.
We need to develop a deformation theory with estimates independent of 
several parameters (e.g. second Chern number, etc.).
A key ingredient is a decomposition of $\mathbb{R}$ into ``good intervals'' and ``bad intervals''.
(Indeed this decomposition will be also important in Step 1.)
We fix a sufficiently small number $\nu>0$.
Take an ASD connection $A$ on $E$, and let $n\in \mathbb{Z}$.
If the $L^\infty$-norm of the curvature $F_A$ over $n<t<n+1$ 
is greater than or equal to $\nu$, then we call the interval $(n,n+1)$ good.
Otherwise we call it bad.
If $A$ is an instanton, then there are only finitely many good intervals.
The meaning of this good/bad dichotomy is as follows.
If $(n,n+1)$ is good, then for any gauge transformation $g$ of $E$ over $n<t<n+1$ we have 
\[ \min_{\pm} \norm{g \pm 1}_{L^\infty((n,n+1)\times S^3)} 
    \leq \const(\nu) \cdot \norm{d_A g}_{L^2_{2,A}((n,n+1)\times S^3)}.\]
(See Lemma \ref{lemma: non-degeneracy estimate}.)
This means that we have a good control of gauge transformations over good intervals.
If $(n,n+1)$ is bad, then $A$ is close to a trivial flat connection (which is reducible) over $n<t<n+1$.
So we lose the above control of gauge transformations there.
This apparently causes a difficulty.
But if $A$ is close to a trivial flat connection, then its structure is simple.
So $A$ has little information over bad intervals.
(This means that bad intervals are ``not so bad''.)
We need to analyze these two different behaviors separately.
This can be done by introducing appropriate weighted norms, and 
we alway have to care effects of the weight on our estimates.

Our quantitative deformation theory tells us how many memory we need in order to memorize $V_i$.
Then we combine this with the results in the previous steps, 
and we can get the desired estimate on the metric mean dimension.

\textbf{Organization of the paper}:
In Section \ref{subsection: review of mean dimension} we explain the basic definitions of mean dimension 
and metric mean dimension.
In Section \ref{subsection: energy density} we prepare a lemma on the energy density $\rho(d)$.
In Section \ref{subsection: notations} we explain some notations which are used in the rest of the paper.

In Section \ref{subsection: setting} we introduce weighted norms which reflect the good/bad decomposition 
structure.
In Section \ref{subsection: main propositions} we state three main propositions 
(Propositions \ref{prop: decomposition}, \ref{prop: instanton approximation} and \ref{prop: quantitative deformation theory})
and prove Theorem \ref{thm: main theorem} by assuming them.
Propositions \ref{prop: decomposition}, \ref{prop: instanton approximation} and \ref{prop: quantitative deformation theory}
correspond to the above three steps respectively, and their proofs occupy the rest of the paper.

In Section \ref{section: decomposition of moduli_d} we prove Proposition \ref{prop: decomposition}.
In Section \ref{section: instanton approximation} we prepare several estimates on instanton approximation and prove 
Proposition \ref{prop: instanton approximation}.
In Section \ref{section: quantitative deformation theory} we develop a quantitative study of instanton deformation theory 
in detail and prove Proposition \ref{prop: quantitative deformation theory}.

\textbf{Acknowledgement}.
I wish to thank Professor Kenji Fukaya and Professor Elon Lindenstrauss.
I came up with the idea of using metric mean dimension through conversations with them.

\section{Some preliminaries}

\subsection{Review of mean dimension} \label{subsection: review of mean dimension}

In this subsection we review the basic facts on the mean dimension theory.
For the details, see Gromov \cite{Gromov} and Lindenstrauss--Weiss \cite{Lindenstrauss--Weiss}.

Let $(M,\dist)$ be a compact metric space. Here $\dist$ is a distance function of $M$.
We introduce some metric invariants of $(M,\dist)$.
Let $N$ be a topological space.
For $\varepsilon>0$, a continuous map $f:M\to N$ is called an $\varepsilon$-\textbf{embedding} 
if $\mathrm{Diam} f^{-1}(y) < \varepsilon$ for all $y\in N$.
We define the $\varepsilon$-\textbf{width dimension} $\widim_\varepsilon(M,\dist)$ 
as the minimum integer $n\geq 0$ such that 
there exist an $n$-dimensional finite polyhedron $P$ and an $\varepsilon$-embedding $f:M\to P$.
The covering dimension $\dim M$ is obtained by 
\[ \dim M  = \lim_{\varepsilon\to 0}\widim_\varepsilon(M,\dist).\]
For $\varepsilon>0$ we set 
\begin{equation*}
   \begin{split}
   &\#(M,\dist,\varepsilon) = \min\{\, |\alpha|\, |\, \text{$\alpha$ is an open covering of $M$ with 
   $\Diam U<\varepsilon$ for all $U\in \alpha$}\}, \\
   &\#_{\mathrm{sep}}(M,\dist,\varepsilon) = \max\{n\geq 1|\, \exists x_1,\dots,x_n\in M \text{ with }
   \dist(x_i,x_j)>\varepsilon \> (i\neq j)\}.
   \end{split}
\end{equation*} 
These are almost equivalent to each other:
For $0<\delta<\varepsilon/2$ 
\[ \#_{\mathrm{sep}}(M,\dist,\varepsilon) \leq \#(M,\dist,\varepsilon) 
   \leq \#_{\mathrm{sep}}(M,\dist,\delta).\]
The next lemma will be useful.
\begin{lemma} \label{lemma: separated set}
Let $(M,\dist)$ and $(N,\dist')$ be metric spaces.
Let $\varepsilon>0$ and $\delta>0$.
Suppose there exists a map (not necessarily continuous) $f:M\to N$ satisfying 
\[ \dist'(f(x),f(y)) \leq \delta \Rightarrow \dist(x,y) \leq \varepsilon.\]
Then $\#_{\mathrm{sep}}(M,\dist,\varepsilon)\leq \#_{\mathrm{sep}}(N,\dist',\delta)$.
\end{lemma}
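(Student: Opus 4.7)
The plan is to argue by contrapositive on the hypothesized implication and then transport a maximal separated set in $M$ across $f$ to produce a separated set of the same cardinality in $N$.

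First I would rephrase the hypothesis in contrapositive form: if $\dist(x,y) > \varepsilon$, then $\dist'(f(x),f(y)) > \delta$. Next, let $n := \#_{\mathrm{sep}}(M,\dist,\varepsilon)$ and pick points $x_1,\dots,x_n \in M$ with $\dist(x_i,x_j) > \varepsilon$ for all $i \neq j$, which exist by the definition of $\#_{\mathrm{sep}}$. Applying the contrapositive pairwise yields $\dist'(f(x_i),f(x_j)) > \delta$ for $i \neq j$; in particular the points $f(x_1),\dots,f(x_n)$ are pairwise distinct, so they form a $\delta$-separated subset of $N$ of cardinality exactly $n$. By the definition of $\#_{\mathrm{sep}}(N,\dist',\delta)$, this gives $n \leq \#_{\mathrm{sep}}(N,\dist',\delta)$, which is the desired inequality. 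No step here is an obstacle: the content is purely the observation that $f$ is automatically injective on an $\varepsilon$-separated set and sends it to a $\delta$-separated set.
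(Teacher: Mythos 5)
Your argument is correct and is exactly the standard one the paper leaves implicit (the paper's proof is simply ``Obvious.''): pass to the contrapositive and observe that $f$ sends any $\varepsilon$-separated set in $M$ to a $\delta$-separated set of the same cardinality in $N$. Nothing to add.
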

\begin{proof}
Obvious. 
\end{proof}
The following example is important.
This was used by Li--Liang \cite[Lemma 7.4]{Li--Liang}.
\begin{example} \label{example: separated set of Banach ball}
Let $(V,\norm{\cdot})$ be an $n$-dimensional Banach space over $\mathbb{R}$.
Let $B_r(V)$ be the closed $r$-ball of $V$ around the origin. For any $\varepsilon>0$
\[ \#_{\mathrm{sep}}(B_r(V),\norm{\cdot},\varepsilon) \leq \left(\frac{\varepsilon+2r}{\varepsilon}\right)^n.\]
\end{example}
\begin{proof}
Let $\mu$ be the translation invariant measure (i.e. Haar measure) on $V$ normalized so that 
$\mu(B_1(V))=1$.
Then for any $r>0$ we have 
$\mu(B_r(V)) = r^n$.
Choose $\{x_1,\dots,x_N\}\subset B_r(V)$ with $\norm{x_i-x_j}>\varepsilon$ for $i\neq j$.
Let $B_i$ be the closed $\varepsilon/2$-ball centered at $x_i$.
These $B_i$ are disjoint and their union is contained in $B_{r+\varepsilon/2}(V)$.
Hence 
\[  N(\varepsilon/2)^n = \mu\left(\bigcup_{i=1}^N B_i\right) \leq \mu(B_{r+\varepsilon/2}(V)) = (r+\varepsilon/2)^n.\]
\end{proof}

Suppose the Lie group $\mathbb{R}$ continuously acts on a compact metric space $(M,\dist)$.
For a subset $\Omega\subset \mathbb{R}$ we define a new distance $\dist_\Omega$ on $M$ by 
\[ \dist_\Omega(x,y) := \sup_{t\in \Omega} \dist(t.x,t.y).\]
We define the mean dimension $\dim(M:\mathbb{R})$ by
\[ \dim(M:\mathbb{R}) := \lim_{\varepsilon\to 0}
    \left(\lim_{T\to +\infty} \frac{\widim_\varepsilon(M,\dist_{(-T,T)})}{2T}\right).\]
This is independent of the choice of a distance function $\dist$.
So the mean dimension is a topological invariant.
If $\dim M <+\infty$, then the mean dimension $\dim(M:\mathbb{R})$ is zero.

Next we introduce metric mean dimension
(Lindenstrauss--Weiss \cite[Section 4]{Lindenstrauss--Weiss}). 
For $\varepsilon>0$ we define $S(M,\dist,\varepsilon)$ by 
\[ S(M,\dist,\varepsilon) = \lim_{T\to +\infty} \frac{\log \#(M,\dist_{(-T,T)},\varepsilon)}{2T}.\]
This is the entropy of $M$ ``at the scale $\varepsilon$''.
The above limit always exists because of the natural subadditivity:
\[ \#(M,\dist_{\Omega_1\cup\Omega_2},\varepsilon) \leq \#(M,\dist_{\Omega_1},\varepsilon)
   + \#(M,\dist_{\Omega_2},\varepsilon), \quad (\Omega_1,\Omega_2\subset \mathbb{R}).\]
The topological entropy of $M$ is defined by 
$h_{\mathrm{top}}(M:\mathbb{R}) = \lim_{\varepsilon\to 0}S(M,\dist,\varepsilon)$.
We define the metric mean dimension $\mmdim(M,\dist:\mathbb{R})$ by 
\begin{equation}\label{eq: definition of meric mean dimension}
   \mmdim(M,\dist:\mathbb{R}) := \liminf_{\varepsilon\to 0}\frac{S(M,\dist,\varepsilon)}{|\log\varepsilon|}.
\end{equation}
The metric mean dimension $\mmdim(M,\dist:\mathbb{R})$ depends on the choice of a distance.
If the topological entropy is finite, then the metric mean dimension is zero.
Lindenstrauss--Weiss \cite[Theorem 4.2]{Lindenstrauss--Weiss} proved the following fundamental theorem.
\begin{theorem} \label{thm: metric mean dimension}
Metric mean dimension is always an upper bound on mean dimension:
\[ \dim(M:\mathbb{R}) \leq \mmdim(M,\dist:\mathbb{R}).\]
In particular if the topological entropy is finite, then the mean dimension is zero.
\end{theorem}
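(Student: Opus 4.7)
The plan is to deduce Theorem \ref{thm: metric mean dimension} from a purely topological inequality comparing width dimension with covering numbers at a fixed scale, and then to lift that inequality to the dynamical setting by routine limiting arguments. The dynamics enters only through the definition of $\dist_{(-T,T)}$ and the subadditivity of $\log\#(M,\dist_\Omega,\varepsilon)$ in $\Omega$ already noted in the excerpt.

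The heart of the proof is a static inequality of the form
\[
\widim_\varepsilon(N,\dist) \leq C\cdot\frac{\log\#(N,\dist,\delta)}{|\log\delta|},
\]
valid for any compact metric space $(N,\dist)$ and any $0<\delta<\varepsilon$, with $C$ a universal constant. The starting point is the classical nerve construction: for any finite open cover $\{U_1,\dots,U_m\}$ of $N$ of mesh $<\delta$, a subordinate partition of unity produces a continuous map $f\colon N\to\Delta^{m-1}$ into the standard simplex whose point-preimages have diameter $<\delta$. This alone yields only the crude bound $\widim_\delta(N,\dist)\leq m-1$, which grows linearly in $\#(N,\dist,\delta)$ rather than logarithmically; applied to $\dist_{(-T,T)}$ it would blow up exponentially in $T$ and be useless. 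The subtle step is to project $f$ onto a low-dimensional subcomplex of $\Delta^{m-1}$ while preserving the $\varepsilon$-embedding property. I would carry this out via a multiscale general-position argument: organize the cover into $\log(\varepsilon/\delta)$ dyadic refinement levels and at each level push the image off a high-dimensional skeleton, collapsing the effective target dimension down to order $\log m/|\log\delta|$. Example \ref{example: separated set of Banach ball} shows this order of magnitude is sharp.

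Granting the static inequality, I would take $(N,\dist)=(M,\dist_{(-T,T)})$, divide by $2T$, and let $T\to\infty$:
\[
\lim_{T\to\infty}\frac{\widim_\varepsilon(M,\dist_{(-T,T)})}{2T} \leq C\cdot\frac{S(M,\dist,\delta)}{|\log\delta|}.
\]
The left side does not depend on $\delta$, so it is bounded by the infimum of the right side over $0<\delta<\varepsilon$, hence by $C\cdot\liminf_{\delta\to 0}S(M,\dist,\delta)/|\log\delta| = C\cdot\mmdim(M,\dist:\mathbb{R})$. Letting $\varepsilon\to 0$ on the left yields $\dim(M:\mathbb{R})\leq C\cdot\mmdim(M,\dist:\mathbb{R})$; a careful tracking of constants in the static lemma gives $C=1$. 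For the second assertion, if $h_{\mathrm{top}}(M:\mathbb{R})<\infty$ then $S(M,\dist,\delta)$ is bounded in $\delta$, so $S/|\log\delta|\to 0$ and both the metric mean dimension and, by the inequality just proved, the mean dimension vanish.

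The main obstacle is the static lemma. The nerve map comes for free, but extracting an $\varepsilon$-embedding into a polyhedron whose dimension scales \emph{logarithmically} in the covering number requires a genuine topological input: pushing the barycentric map off high-dimensional skeleta by general position, organized across multiple scales of refinement. This is precisely the content of Lindenstrauss--Weiss's proof of their Theorem 4.2; the rest of the argument is formal limit-taking.
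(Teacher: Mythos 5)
The paper does not prove this statement; it is quoted directly from Lindenstrauss--Weiss \cite[Theorem 4.2]{Lindenstrauss--Weiss}, so there is no internal argument here to compare against. Examining your sketch on its own terms, the proposed static inequality
\[
\widim_\varepsilon(N,\dist) \leq C\cdot\frac{\log\#(N,\dist,\delta)}{|\log\delta|}
\qquad (0<\delta<\varepsilon)
\]
is false as stated, and the error is not cosmetic: the denominator $|\log\delta|$ is not scale-covariant, whereas the left-hand side is. Replacing $\dist$ by $\lambda\dist$ rescales both the $\varepsilon$-scale at which $\widim_\varepsilon$ is evaluated and the $\delta$-scale at which $\#$ is evaluated, but shifts $|\log\delta|$ by an additive $|\log\lambda|$. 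Concretely, take $N=[0,1]^n$, $\dist=\lambda\cdot\ell^\infty$, $\varepsilon=\lambda$ and $\delta=\lambda/2$. By Lebesgue's covering lemma $\widim_\varepsilon(N,\dist)=\widim_1([0,1]^n,\ell^\infty)=n$, while $\#(N,\dist,\lambda/2)=\#([0,1]^n,\ell^\infty,1/2)$ is a constant depending only on $n$; hence the right-hand side tends to $0$ as $\lambda\to 0$ while the left-hand side is $n$. Any correct static lemma of this type must have $\log(\varepsilon/\delta)$, or a comparable scale-ratio, in the denominator.

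That slip is repairable: with $\log(\varepsilon/\delta)$ in place of $|\log\delta|$, your limiting argument still yields $\mmdim$, since $\log(\varepsilon/\delta)/|\log\delta|\to 1$ as $\delta\to 0$ with $\varepsilon$ fixed, so the infimum over $\delta<\varepsilon$ is unchanged. But the remaining content --- constructing an $\varepsilon$-embedding into a polyhedron whose dimension grows only like $\log\#(N,\dist,\delta)/\log(\varepsilon/\delta)$ rather than like $\#(N,\dist,\delta)$ --- is exactly where the theorem lives, and your ``multiscale general-position'' paragraph names that step without proving it, explicitly deferring to Lindenstrauss--Weiss. As a blind reconstruction the overall shape (covering-number bound on $\widim_\varepsilon$, normalize by $2T$, optimize $\delta$ then $\varepsilon$) is a plausible account of the argument, but the static inequality as written would fail, and the part that carries the actual weight is left to the reference.
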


\subsection{Energy density}  \label{subsection: energy density}

In this subsection we prepare a lemma on the energy density $\rho(d)$ introduced in 
(\ref{eq: energy density}).
First of all, the limit in the definition (\ref{eq: energy density}) always exists because 
we have the natural subadditivity:
\begin{equation*}
 \sup_{t\in \mathbb{R}}\int_{(t,t+T_1+T_2)\times S^3}|F_A|^2d\vol \leq 
  \sup_{t\in \mathbb{R}}\int_{(t,t+T_1)\times S^3}|F_A|^2d\vol +
  \sup_{t\in \mathbb{R}}\int_{(t,t+T_2)\times S^3}|F_A|^2d\vol.
\end{equation*}

\begin{lemma} \label{lemma: another form of energy density}
\begin{equation}  \label{eq: energy density another form}
   \rho(d) = 
   \lim_{T\to +\infty}\left(\frac{1}{16\pi^2T}\sup_{[A]\in \moduli_d}\int_{(-T,T)\times S^3}|F_A|^2d\vol\right).
\end{equation}
The limit of the right-hand-side exists because of the subadditivity.
\end{lemma}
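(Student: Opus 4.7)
The plan is to show that both sides of (\ref{eq: energy density another form}) equal $L/(8\pi^2)$, where
\[ L := \lim_{T\to +\infty}\frac{h(T)}{T}, \qquad h(T) := \sup_{[A]\in\moduli_d}\int_{(0,T)\times S^3}|F_A|^2\,d\vol.\]
First I would use the $\mathbb{R}$-invariance of $\moduli_d$ (the ASD equation and the condition $\norm{F_A}_{\mathrm{op}}\le d$ are both shift-invariant) to observe $\sup_{[A]\in\moduli_d}\int_{(t,t+T)\times S^3}|F_A|^2 = h(T)$ for every $t\in\mathbb{R}$, so in particular $\sup_{[A]}\int_{(-T,T)\times S^3}|F_A|^2 = h(2T)$. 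Since $h$ is subadditive, Fekete's lemma gives $L = \inf_{T>0}h(T)/T$, and the right-hand side of (\ref{eq: energy density another form}) equals $\lim_T h(2T)/(16\pi^2 T)=L/(8\pi^2)$.

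The easy inequality $\rho(d)\le L/(8\pi^2)$ is immediate: for any $[A]\in\moduli_d$ and any $t$, $\int_{(t,t+T)\times S^3}|F_A|^2\le h(T)$, so $\rho(A)\le L/(8\pi^2)$, and the claim follows by taking the supremum over $[A]$. For the reverse inequality I propose a Krylov--Bogolyubov style compactness argument. For each $T>0$, use compactness of $\moduli_d$ and continuity of $[A]\mapsto \int_{(-T/2,T/2)\times S^3}|F_A|^2$ (which follows from $C^\infty$-convergence on compacts) to pick $[A_T]\in\moduli_d$ with $\int_{(-T/2,T/2)\times S^3}|F_{A_T}|^2\ge h(T)-1$, and form the empirical probability measure $\mu_T := T^{-1}\int_{-T/2}^{T/2}\delta_{s^*A_T}\,ds$ on $\moduli_d$. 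By weak-$*$ compactness extract a subsequential limit $\mu_\infty$, which is $\mathbb{R}$-invariant by the standard Krylov--Bogolyubov computation. For the continuous function $e:\moduli_d\to\mathbb{R}$, $e(A):=\int_{(0,1)\times S^3}|F_A|^2\,d\vol$, a Fubini identity combined with the pointwise bound $|F_A|^2\le\const\cdot d^2$ yields $\int e\,d\mu_T \ge h(T)/T - O(1/T) \to L$, hence $\int e\,d\mu_\infty\ge L$.

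Applying Birkhoff's pointwise ergodic theorem to the continuous $\mathbb{R}$-action on $(\moduli_d,\mu_\infty)$, the time average $\tilde{e}(A):=\lim_{T\to\infty}T^{-1}\int_{-T/2}^{T/2}e(s^*A)\,ds$ exists $\mu_\infty$-a.e.\ with $\int\tilde{e}\,d\mu_\infty=\int e\,d\mu_\infty\ge L$, so some $[A^*]\in\moduli_d$ satisfies $\tilde{e}(A^*)\ge L$. Unwinding by Fubini and absorbing $O(1)$ boundary corrections into $o(T)$ gives $\int_{(-T/2,T/2)\times S^3}|F_{A^*}|^2\ge TL - o(T)$ as $T\to+\infty$, whence $\sup_t\int_{(t,t+T)\times S^3}|F_{A^*}|^2\ge TL - o(T)$ and therefore $\rho(A^*)\ge L/(8\pi^2)$. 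The main obstacle is precisely this reverse inequality: a near-maximizer $A_T$ may concentrate its high-energy interval anywhere in $(-T/2,T/2)$ and its location can drift with $T$, so a plain $C^\infty$-subsequential limit of $(A_T)$ need not realize the density $L$ near $t=0$. Averaging along the orbit by means of the empirical measure before extracting the limit is exactly what sidesteps this drift.
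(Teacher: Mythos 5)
Your proposal is correct and follows essentially the same route as the paper: both prove the reverse inequality $\rho(d)\ge\rho_1(d)$ by taking weak-$*$ limits of empirical measures along near-maximizers (Krylov--Bogolyubov), then applying the pointwise ergodic theorem to produce an orbit realizing the density. The only cosmetic differences are that the paper restricts to the $\mathbb{Z}$-action and takes an explicit ergodic component before invoking Birkhoff, whereas you work with the $\mathbb{R}$-flow and apply Birkhoff directly to the possibly non-ergodic limit measure, extracting a suitable orbit by a mean-value argument instead.
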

\begin{proof}
This can be proved by the method of \cite[Theorem 1.3]{Tsukamoto-energy density}.
But here we give a simpler proof based on the ergodic theorem.
In this proof we restrict the $\mathbb{R}$-action (\ref{eq: action on the moduli}) to 
the subgroup $\mathbb{Z}\subset \mathbb{R}$  
as in Section \ref{subsection: application to dynamical embedding problem}.
We denote by $\rho_1(d)$ the right-hand-side of (\ref{eq: energy density another form}).
$\rho(d)\leq \rho_1(d)$ is obvious.
We define a continuous function $\varphi: \moduli_d\to \mathbb{R}$ by 
\[ \varphi([A]) = \frac{1}{8\pi^2}\int_{(0,1)\times S^3}|F(A)|^2d\vol.\]
Then for $[A]\in \moduli_d$ and positive integers $n$ we have the following equation:
\[  \frac{1}{8\pi^2 n}\int_{(0,n)\times S^3}|F(A)|^2d\vol = \frac{1}{n}\sum_{k=0}^{n-1}\varphi(k[A]).\]
Here $k[A]=[k^*A]$ is the pull-back of $[A]$ by $(t,\theta)\mapsto (t+k,\theta)$.
We can choose a sequence $[A_1],[A_2],\dots$ in $\moduli_d$ so that 
\[ \frac{1}{n}\sum_{k=0}^{n-1}\varphi(k[A_n])
  = \frac{1}{8\pi^2 n}\int_{(0,n)\times S^3}|F(A_n)|^2d\vol \to \rho_1(d) \quad (n\to \infty).\]
We define a Borel probability measure $\mu_n$ on $\moduli_d$ by 
\[ \mu_n := \frac{1}{n}\sum_{k=0}^{n-1} \delta_{k[A_n]} \]
where $\delta_{k[A_n]}$ is the delta measure concentrated at the point $k[A_n]$.
Then 
\[ \int_{\moduli_d}\varphi\, d\mu_n = \frac{1}{n}\sum_{k=0}^{n-1}\varphi(k[A_n]) \to \rho_1(d).\]
The space of Borel probability measures is weak$^*$-compact.
So we can pick up an accumulation point 
$\mu_\infty$ of $\{\mu_n\}$. 
$\mu_\infty$ is a $\mathbb{Z}$-invariant Borel probability measure 
(Einsiedler--Ward \cite[Theorem 4.1]{Einsiedler--Ward}) and satisfies 
\[ \int_{\moduli_d}\varphi \, d\mu_\infty = \rho_1(d).\]
By the ergodic decomposition \cite[Theorem 4.8]{Einsiedler--Ward}, 
we can choose an ergodic component $\mu$ of $\mu_\infty$ satisfying 
\[ \int_{\moduli_d}\varphi\, d\mu \geq \rho_1(d).\]
By the pointwise ergodic theorem \cite[Theorem 2.30]{Einsiedler--Ward}, for $\mu$-a.e. $[A]\in \moduli_d$
\[ \frac{1}{n}\sum_{k=0}^{n-1} \varphi(k[A]) \to \int_{\moduli_d}\varphi \, d\mu \geq \rho_1(d).\]
This implies $\rho(A)\geq \rho_1(d)$ for $\mu$-a.e. $[A]\in \moduli_d$.
In particular we get $\rho(d)\geq \rho_1(d)$.
\end{proof}

\subsection{Notations} \label{subsection: notations}

$\bullet$
In most of the arguments the variable $t$ means the natural projection $t:\mathbb{R}\times S^3\to \mathbb{R}$.

$\bullet$
The value of $d$ (which is the parameter of $\moduli_d$) is fixed in the 
rest of the paper.
So we treat it as a constant and omit to write the dependence on $d$ in various estimates.
For two quantities $x$ and $y$ we write 
\[ x \lesssim y \]
if there exists a universal positive constant $C$ (which might depend on $d$) satisfying 
$x\leq C y$.
We also use the following notation:
\[ x \lesssim_{a,b,c,\dots, k} y \]
This means that there exists a positive constant $C(a,b,c,\dots,k)$ 
which depends only on parameters $a,b,c,\dots,k$ satisfying $x\leq C(a,b,c,\dots,k)y$.

$\bullet$
Let $A$ be a connection on $E$. Let $k\geq 0$ be an integer, and let $p\geq 1$.
For $\xi\in \Omega^i(\ad E)$ $(0\leq i\leq 4)$
and a subset $U\subset X$, we define a norm $\norm{\xi}_{L^p_{k,A}(U)}$ by 
\[ \norm{\xi}_{L^p_{k,A}(U)} := \left(\sum_{j=0}^k \norm{\nabla_A^j\xi}_{L^p(U)}^p\right)^{1/p}.\]
For $\alpha<\beta$ we often denote the norm $\norm{\xi}_{L^p_{k,A}((\alpha,\beta)\times S^3)}$
by $\norm{\xi}_{L^p_{k,A}(\alpha,\beta)}$.

\section{Main propositions and the proof of Theorem \ref{thm: main theorem}}  \label{section: main propositions}

\subsection{Setting of the weighted norms} \label{subsection: setting}

The following lemma 
is a basis of our good/bad decomposition argument.
\begin{lemma} \label{lemma: exponential decay}
We can choose $\nu>0$ so that the following statement holds.
Let $T>1$ (possibly $T=\infty$) and let $A$ be an ASD connection on $E$ over $(0,T)\times S^3$ satisfying 
$\norm{F_A}_{L^\infty(0,T)} < \nu$.
Then 

\noindent 
(1) $|F_A|\lesssim \exp(2|t-T/2|-T)$ over $1/3<t<T-1/3$.
Moreover $\norm{F_A}_{L^2(0,T)}<1$.

\noindent 
(2) There exists a bundle trivialization $g$ of $E$ over $0<t<T$ such that
\begin{itemize}
  \item $g$ is a temporal gauge, i.e. the connection matrix $g(A)$ has no $dt$-component.
  \item $|\nabla^k g(A)| \lesssim_k \exp(2|t-T/2|-T)$ over $1/3<t<T-1/3$ for all integers $k\geq 0$.
\end{itemize}
\end{lemma}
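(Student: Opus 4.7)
The plan is to combine gauge-fixing with a spectral analysis of the linearized ASD equation on $S^3$, since a direct Bochner--Weitzenb\"ock estimate on $|F_A|^2$ only yields decay rate $1$ for $|F_A|$ (the positive scalar curvature $R=6$ of $\mathbb{R}\times S^3$ produces $\Delta|F_A|^2\geq 4|F_A|^2$), whereas the statement demands rate $2$. So the argument has to exploit the first-order nature of the ASD equation in a good gauge, not just a second-order Bochner inequality.

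First I would choose $\nu$ small enough that Uhlenbeck's $\varepsilon$-regularity and gauge-fixing theorem \cite{Uhlenbeck, Wehrheim} applies on each unit sub-cylinder $(n,n+1)\times S^3$. Patching the resulting local Coulomb gauges and composing with a time-dependent gauge transformation, I obtain a temporal trivialization $g$ of $E$ over $(0,T)\times S^3$ in which the connection $1$-form $a(t):=g(A)(t)\in\Omega^1(S^3,su(2))$ is uniformly $C^0$-small, $\norm{a(t)}_{L^\infty(S^3)}\lesssim\nu$. This already provides the temporal gauge required for part~(2).

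Next, in this gauge the ASD equation becomes the flow $\partial_t a=-{*_3}(d_{S^3}a+a\wedge a)$, whose linearization at $a=0$ is $\partial_t a=La$ with $L=-{*_3}d_{S^3}$ acting on coexact $1$-forms of $S^3$. The operator ${*_3}d_{S^3}$ on coexact $1$-forms of the unit $S^3=SU(2)$ has smallest nonzero absolute eigenvalue $2$, as one sees from the parallel left-invariant coframe $\omega^1,\omega^2,\omega^3$, which satisfies ${*_3}d_{S^3}\omega^i=-2\omega^i$, together with the Peter--Weyl decomposition of $\Omega^1(S^3)$. Consequently the bounded linear solution on $(0,T)\times S^3$ with boundary data of size $\lesssim\nu$ at the two ends satisfies
\[ \norm{a_{\mathrm{lin}}(t)}_{L^\infty(S^3)}\lesssim \nu\,e^{2|t-T/2|-T}, \]
and a Banach contraction argument for the quadratic perturbation $a\wedge a$ in this exponentially weighted norm extends the bound to the nonlinear solution $a$. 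Since $F_A=dt\wedge\partial_t a+d_{S^3}a+a\wedge a$, the estimate $|F_A|\lesssim e^{2|t-T/2|-T}$ on $(1/3,T-1/3)\times S^3$ follows, proving~(1); the $L^2$ bound $\norm{F_A}_{L^2(0,T)}<1$ is then obtained by integration after shrinking $\nu$ if necessary. The higher-derivative bounds in~(2) come from iterating elliptic regularity for the ASD equation in the same gauge, each differentiation costing only a constant that is absorbed into $\lesssim_k$.

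The main obstacle I foresee is to make the gauge patching of the first step compatible with the exponentially weighted Banach space used in the nonlinear bootstrap: the temporal trivialization has to be produced with uniform $C^0$-smallness over the whole $(0,T)\times S^3$, uniformly in $T\in(1,\infty]$, so that the contraction mapping closes independently of the length of the cylinder. Once that interaction is arranged, the spectral gap of ${*_3}d_{S^3}$ on coexact $1$-forms of $S^3$ delivers the sharp rate $2$ automatically.
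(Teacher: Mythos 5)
Your spectral route is genuinely different from the paper's proof, which defers entirely to two references: a doubling-type estimate (\cite[Proposition 4.4]{Donaldson}) iterated along the tube to bound $\|F_A\|_{L^2(0,T)}$ uniformly in $T$, and then \cite[Proposition 7.3.3]{Donaldson--Kronheimer} to convert that into pointwise exponential decay of $F_A$; only \emph{after} the gauge-invariant curvature decay is in hand does the paper build the temporal gauge, by trivializing $E$ on the central slice $\{t=T/2\}$ with $|\nabla^k g(A)|\lesssim_k e^{-T}$ there and then propagating outward via the temporal-gauge ODE $\partial_t a=-{*_3}F(a)$. Your observation that a naive Bochner estimate on $|F_A|$ is suboptimal, and that the sharp rate $2$ is the spectral gap of ${*_3}d_{S^3}$ on coexact $1$-forms of $S^3$, is correct and is indeed what sits behind the exponential-decay lemma you would be re-proving.

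The gap is in the gauge-fixing. The linearized temporal-gauge flow $\partial_t a=-{*_3}d_{S^3}a$ has a nontrivial kernel: on exact $1$-forms $a=d_{S^3}\varphi$ we have ${*_3}d_{S^3}a=0$, so those modes neither grow nor decay along the cylinder. Consequently the claimed bound $\|a_{\mathrm{lin}}(t)\|_{L^\infty}\lesssim\nu\,e^{2|t-T/2|-T}$ is false in general (only the coexact part obeys it), and the contraction in the exponentially weighted norm does not close on the full connection matrix $a$. This can be repaired for part (1) by running the contraction on $F_A$ or on the coexact part of $a$, since $F(a)=d_{S^3}a+a\wedge a$ annihilates the exact part to first order; but it is fatal for part (2) as written, because (2) asserts decay of $g(A)$ itself, and for that you must use the residual $t$-independent gauge freedom to make $a(T/2)$ already of size $e^{-T}$ and then integrate the temporal-gauge ODE using the \emph{already established} decay of $F(a)$ --- precisely the two-stage ordering the paper uses. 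Finally, the step ``patch local Uhlenbeck Coulomb gauges to a temporal gauge with $\|a(t)\|_{L^\infty}\lesssim\nu$ uniformly in $T$'' is not free: keeping the patched gauge uniformly small along an arbitrarily long tube is essentially the content of the doubling estimate you are trying to bypass, so it needs its own argument rather than a one-line appeal to $\varepsilon$-regularity.
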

\begin{proof}
This can be proved in the same way as in Donaldson--Kronheimer \cite[Chapter 7.3, Proposition 7.3.3]{Donaldson--Kronheimer} 
or Donaldson \cite[Proposition 4.4]{Donaldson}.
But here we briefly explain how to deduce the above statement from these references.

(1)
By \cite[Proposition 4.4]{Donaldson} we can find $L>0$ and $\nu>0$ such that if an ASD connection $A$ over $-L<t<L$ satisfies 
$\norm{F_A}_{L^\infty(-L,L)}<\nu$ then 
\[ \int_{-1<t<1}|F_A|^2 d\vol < \frac{1}{10}\left(\int_{-L<t<-L+1}|F_A|^2 d\vol +\int_{L-1<t<L}|F_A|^2 d\vol\right).\]
Using this estimate iteratively, we can show that the condition $\norm{F_A}_{L^\infty(0,T)}<\nu \ll 1$ implies 
$\norm{F_A}_{L^2(0,T)}\lesssim \nu$ (the implicit constant is independent of $T$).
Then we can prove the exponential decay of the condition (1) by \cite[Proposition 7.3.3]{Donaldson--Kronheimer}.

(2) The derivatives of $F_A$ also satisfy the same exponential decay condition.
Then we can choose a bundle trivialization $g$ of $E$ over $\{t=T/2\}$ so that 
$|\nabla^k g(A)| \lesssim_k e^{-T}$. We extend it to $-T<t<T$ by the temporal gauge condition.
This satisfies the required properties.
\end{proof}

For a real number $t$ and a subset $G$ of $\mathbb{Z}$ we define
$|t-G|$ as the infimum of $|t-n|$ over $n\in G$.
Let $A$ be a connection on $E$.
We set 
\[ G(A) = \{n\in \mathbb{Z}| \norm{F_A}_{L^\infty(n,n+1)}\geq \nu\}.\] 
Here $\nu$ is the positive constant introduced in Lemma \ref{lemma: exponential decay}.
For a positive integer $T$ we set $G(A,T) = G(A)\cup\{-T,T\}$.
For $r>0$ we define $U_r(A,T)\subset \moduli_d$ as the set of 
$[B]\in \moduli_d$ such that there exists a gauge transformation $g$ of $E$ over $-T<t<T$ satisfying 
\[ e^{|n-G(A,T)|} \norm{g(B)-A}_{L^2_{10,A}(n,n+1)} \leq r \text{ for all integers $-T\leq n\leq T-1$}.\]

Let $A$ be a non-flat instanton (finite energy ASD connection) on $E$.
Here ``finite energy'' means 
\[ \int_{X}|F_A|^2 d\vol < +\infty.\]
By \cite[Theorem 4.2]{Donaldson} the curvature $F_A$ decays exponentially as $t\to \pm \infty$.
We define $G'(A)$ as the set of integers $n$ satisfying $\norm{F_A}_{L^\infty(n,n+1)}\geq \nu/2$.
This is a non-empty finite set. Fix $0<\alpha<1$ and 
we define a smooth function $W_A:\mathbb{R}\to (0,+\infty)$
as a smoothing of the function 
\[ \exp(\alpha |t-G'(A)|).\]
The function $\exp(\alpha|t-G'(A)|)$ has finitely many non-differentiable points. So we smooth them out.
Details of the smoothing are not important.
We construct $W_A$ so that it satisfies 
\[ e^{\alpha|t-G'(A)|} \lesssim W_A(t) \lesssim e^{\alpha |t-G'(A)|}, \quad 
   W_A^{(k)} \lesssim_k W_A,\]
where the implicit constants are independent of $t\in \mathbb{R}$. 
$W_A^{(k)}$ is the $k$-th derivative of $W_A$.

Let $G'(A) = \{n_1<n_2<\dots<n_G\}$, and set $n_0=-\infty$ and $n_{G+1}=+\infty$.
For $u\in \Omega^i(\ad E)$ and $k\geq 0$ we define a norm 
\begin{equation} \label{eq: defintion of weighted norm}
   \nnorm{u}_{k,A} = \max_{0\leq j\leq G}\norm{W_A u}_{L^2_{k,A}(n_j,n_{j+1})}.
\end{equation}
For $r>0$ we define $V_r(A)$ as the set of gauge equivalence classes of ASD connections $B$ on $E$
such that there exists a gauge transformation $g$ of $E$ satisfying 
\[ \nnorm{g(B)-A}_{2,A} \leq r.\]

\subsection{Main propositions and the proof of Theorem \ref{thm: main theorem}} 
\label{subsection: main propositions}

\begin{proposition} \label{prop: decomposition}
For any $\delta>0$ and any integer $T>1$ there exist 
$[A_1],\dots,[A_n]\in \moduli_d$ satisfying 
\[ \log n\lesssim_\delta T, \quad \moduli_d = \bigcup_{i=1}^n U_\delta(A_i,T).\]
\end{proposition}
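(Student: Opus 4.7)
The plan is to encode each $[B]\in\moduli_d$ by a string of length $O_\delta(T)$ that determines $[B]$ up to $\delta$ in the weighted norm defining $U_\delta(\cdot,T)$, and then, for each achievable encoding, to pick an arbitrary $[A_i]\in\moduli_d$ realizing it. The encoding has two parts: part (a) is the combinatorial pattern $S(B):=G(B)\cap [-T,T-1]\subset\mathbb{Z}$, which requires $2T$ bits; part (b) records, for each $n\in S(B)$, a label from a finite $\eta$-net in the compact (by Uhlenbeck) quotient of ASD connections on the enlarged cylinder $(n-1,n+2)\times S^3$ with $\norm{F}_{L^\infty}\leq d$, contributing at most $|S(B)|\log_2 C(\eta)\leq 2T\log_2 C(\eta)$ bits. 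The total number of encodings is at most $(2C(\eta))^{2T}$, giving $\log n\lesssim_\eta T$; taking $\eta=\eta(\delta)$ as below yields $\log n\lesssim_\delta T$.

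To check that any $[B]\in\moduli_d$ sharing an encoding with $[A_i]$ lies in $U_\delta(A_i,T)$, I would assemble local gauges on the good and bad portions of $[-T,T]$ separately. On each good unit interval $(n,n+1)$ with $n\in S$, the shared label provides a Coulomb-type gauge $g_n$ over $(n-1,n+2)\times S^3$ with $\norm{g_n(B)-A_i}_{L^2_{10,A_i}(n,n+1)}\leq 2\eta$, by the triangle inequality through the net representative. On each maximal bad sub-interval $(m_j,m_{j+1})$ (where $m_j,m_{j+1}$ are consecutive elements of $S\cup\{-T,T\}$), Lemma \ref{lemma: exponential decay}(2) supplies temporal gauges in which the connection matrices of both $B$ and $A_i$ satisfy $|\nabla^k\cdot|\lesssim_k e^{-2\dist(t,\{m_j,m_{j+1}\})}$. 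Since the weight $e^{|n-G(A_i,T)|}$ in the definition of $U_\delta(A_i,T)$ grows at rate $1$ while this pointwise decay rate is $2$, the weighted $L^2_{10,A_i}$ norm of the difference on any unit sub-interval $(n,n+1)\subset (m_j,m_{j+1})$ is bounded by $Ce^{-\dist(n,\{m_j,m_{j+1}\})}$, and can be made $\leq\eta$ uniformly by fixing $\nu$ small enough in Lemma \ref{lemma: exponential decay} once and for all.

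Finally I would patch the local gauges across each integer junction using a standard Uhlenbeck cocycle/bump-function argument, producing a single gauge $g$ on $(-T,T)\times S^3$ that realizes the weighted bound on every unit sub-interval simultaneously; each junction contributes a patching error of at most $C\eta$ for a universal $C$, which is absorbed by taking $\eta=\delta/C$ at the outset. The main obstacle is exactly this patching step: reconciling the Coulomb-type gauges on good intervals with the temporal gauges on bad intervals while preserving the weighted norm on the unit intervals directly adjacent to a junction, where the exponential slack between the rate-$2$ decay of Lemma \ref{lemma: exponential decay}(2) and the rate-$1$ weight vanishes. The estimates must be uniform in $T$, in the pattern $S$, and in the $\eta$-net choices on the two adjacent good intervals, and making this uniformity explicit is the technically delicate part of the argument.
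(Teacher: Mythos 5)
Your overall architecture is parallel to the paper's: decompose $\moduli_d$ by the combinatorial pattern of good intervals, then locally approximate on a window around each good point. But the encoding you propose is missing a crucial piece of information, and this is not a technical nuisance in the patching step but an actual obstruction.

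Here is the gap. Over a long bad stretch between two consecutive good windows around $n_k$ and $n_{k+1}$, both $A_i$ and $B$ are close to the trivial flat connection (Lemma \ref{lemma: exponential decay}). Your $\eta$-net labels on $(n_k-1,n_k+2)\times S^3$ and $(n_{k+1}-1,n_{k+1}+2)\times S^3$ record only the \emph{gauge-equivalence classes} of $B$ restricted to each window separately, hence determine the local gauges $g_k, g_{k+1}$ only up to an independent choice on each window. When these gauges are compared via parallel transport across the bad stretch, they may disagree by an essentially arbitrary element of $SU(2)$ --- a genuine ``gluing parameter'' reflecting the reducibility of the nearly-flat connection in the middle. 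The weight $e^{|n-G(A_i,T)|}$ in the definition of $U_\delta(A_i,T)$ grows with rate $1$ as $n$ moves into the bad stretch, so any interpolation from $g_k$ to $g_{k+1}$ must be performed near the edges of the bad region, over a length-$O(1)$ strip where the weight is $O(1)$; such an interpolation has derivative at least of order the $SU(2)$-distance between the two gauges at the comparison points, which can be as large as $\pi$ and is not controlled by $\eta$. Consequently two connections sharing your encoding need \emph{not} lie in a common $U_\delta(A,T)$, and your final sentence (``each junction contributes a patching error of at most $C\eta$'') is false without further hypotheses. This is exactly why the paper's Lemma \ref{lemma: gluing gauge transformations degenerate} carries the extra hypothesis $\min_\pm \dist_{SU(2)}(g_1(p),\pm g_2'(q))<\varepsilon_2$, and why the proof of Proposition \ref{prop: decomposition} in the paper, after decomposing by the pattern $\Omega$ and by the join cover $\mathcal{U}=\bigvee_k(-n_k)\cdot\alpha$, further decomposes via the map $\mathcal{A}\to SU(2)^{G-1}$, $A\mapsto (g_{A,k}(p_k)^{-1}g'_{A,k+1}(q_{k+1}))_k$, covering each $SU(2)$ factor by $\varepsilon$-balls.

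The fix is to enlarge your encoding: for each consecutive pair of good blocks, also record a label from an $\varepsilon$-net in $SU(2)/\{\pm1\}$ for the parallel-transport twist. This multiplies the count by at most $C(\varepsilon)^{|S(B)|}\leq C(\varepsilon)^{2T}$, so $\log n\lesssim_\delta T$ still holds, and the patching can then be carried out exactly as in Lemmas \ref{lemma: gluing gauge transformations non-degenerate}, \ref{lemma: gluing gauge transformations degenerate} and \ref{lemma: gluing gauge transformations unify}. (A secondary remark: you restrict the good-interval pattern to $[-T,T-1]$, whereas the paper uses $[-T-R_1,T+R_1]$; some margin beyond $\pm T$ is needed so that the gluing procedure can also terminate cleanly near the endpoints. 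This only changes the constant in $\log n\lesssim_\delta T$.)
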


\begin{proposition} \label{prop: instanton approximation}
For any $r>0$ we can choose $\delta_0=\delta_0(r)>0$ satisfying the following statement.
For any $[A]\in \moduli_d$ and any integer $T>1$ there exists a non-flat instanton $A'$ on $E$ and a map 
\[ U_{\delta_0}(A,T)\to V_r(A'), \quad [B]\mapsto [B'] \]
such that 

\noindent 
(1) 
\[ \norm{F_{A'}}_{L^\infty(X)} \leq D_0, \quad 
  \left| \int_X |F_{A'}|^2d\vol -\int_{(-T,T)\times S^3} |F_A|^2d\vol \right| \lesssim 1.\]
Here $D_0$ is a universal constant independent of $r$.

\noindent
(2) For any $[B]\in U_{\delta_0}(A,T)$ there exists a gauge transformation $g$ of $E$ over $|t|<T-1$ satisfying
\[ |g(B')-B|\lesssim e^{-\sqrt{2}|t-T|} + e^{-\sqrt{2}|t+T|} \quad (|t|<T-1).\]
\end{proposition}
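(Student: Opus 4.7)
The plan is to cap off $A$ near $t=\pm T$ to build a finite-energy approximate instanton, then perturb it to an honest one by Newton iteration in a weighted Coulomb slice, applying the same procedure to each $B\in U_{\delta_0}(A,T)$. The three main ingredients are Lemma \ref{lemma: exponential decay}, a Weitzenb\"ock estimate for the linearized ASD operator on $X=\mathbb{R}\times S^3$, and an implicit-function-theorem argument.

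First I would choose cutoff locations $s_-\in[-T-L,-T]$ and $s_+\in[T,T+L]$ for a constant $L=L(d)$. Following the standard instanton-approximation technique of Taubes \cite{Taubes path} and Donaldson \cite{Donaldson approximation} extended in \cite{Matsuo--Tsukamoto}, one arranges that $A$ is close to a trivial flat connection on $(s_\pm,s_\pm+1)$. Lemma \ref{lemma: exponential decay}(2) then provides temporal-gauge trivializations in which the connection matrices of $A$ and all their derivatives are $O(e^{-cL})$-small. Using these trivializations together with a cutoff function $\chi$ supported in $(s_-,s_++1)$, I define $\tilde A$ to agree with $A$ on $(s_-+1,s_+)$ and to equal the trivial flat connection outside $(s_-,s_++1)$, smoothly interpolating in the transition bands. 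Then $F^+_{\tilde A}$ is supported in the transition bands and is exponentially small in every $C^k$-norm, $\norm{F_{\tilde A}}_{L^\infty(X)}\leq d+O(1)$, and the energy of $\tilde A$ differs from $\int_{(-T,T)\times S^3}|F_A|^2 d\vol$ by $O(1)$, giving the approximant underlying (1).

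Next I would perturb $\tilde A$ to an honest instanton $A'=\tilde A+a$ solving $F^+_{\tilde A+a}=0$ with $d_{\tilde A}^*a=0$. The key linear problem is invertibility of $D_{\tilde A}:=d_{\tilde A}^+\oplus d_{\tilde A}^*$ on $X$. A Weitzenb\"ock identity, using that $X$ has ASD Weyl curvature and scalar curvature $6$ (coming from $S^3$), gives
\[
D_{\tilde A}^{*}D_{\tilde A}\;\geq\;\nabla_{\tilde A}^{*}\nabla_{\tilde A}+2-C\,|F_{\tilde A}|,
\]
so on bad intervals one has a uniform spectral gap of $2$. Combining local parametrices on good intervals with this global lower bound on bad intervals, and using the exponential weight $W_{A'}$ of Section \ref{subsection: setting}, one obtains a right inverse $Q_{\tilde A}$ of $D_{\tilde A}$ in the weighted spaces with Green-kernel decay rate $\sqrt 2$. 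Since $\norm{F^+_{\tilde A}}$ is exponentially small in the corresponding weighted norm, Newton iteration converges to a unique small solution $a$, producing the non-flat instanton $A'$ with uniform estimates.

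For each $[B]\in U_{\delta_0}(A,T)$ I would run the same construction. The defining inequality of $U_{\delta_0}(A,T)$ together with the choice $s_\pm\approx\pm T$ gives
\[
\norm{B-A}_{L^2_{10,A}(s_\pm,s_\pm+1)}\lesssim \delta_0\, e^{-|s_\pm\mp T|},
\]
so composing the gauges of Step 2 with the small gauge change from $A$ to $B$ near $\{s_\pm\}$ produces $\tilde B$, and Newton iteration (now with source size controlled linearly by $\delta_0$) yields $B'$ with $\nnorm{B'-A'}_{2,A'}\lesssim\delta_0$, hence $[B']\in V_r(A')$ as soon as $\delta_0\leq c(r)$. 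Estimate (2) then follows because on $|t|<T-1$ the difference $g(B')-B$ solves the linearized ASD system in $\tilde A$-Coulomb gauge with source supported in $|t|\geq T-1$; applying $Q_{\tilde A}$ and invoking its $e^{-\sqrt 2|t|}$ kernel decay gives $|g(B')-B|\lesssim e^{-\sqrt 2|t-T|}+e^{-\sqrt 2|t+T|}$. The hard part will be establishing invertibility of $D_{\tilde A}$ with constants \emph{uniform in $T$ and in the cardinality of $G(A,T)$}: the Weitzenb\"ock spectral gap $2$ is only available on bad intervals, so the many good intervals must be absorbed by patching local parametrices, and the weight \eqref{eq: defintion of weighted norm} is tuned precisely so that this patching preserves the sharp decay rate $\sqrt 2$.
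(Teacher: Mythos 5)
Your construction of the approximate instanton $\tilde A$ has a fatal gap. You write that ``one arranges that $A$ is close to a trivial flat connection on $(s_\pm,s_\pm+1)$'' and then invoke Lemma~\ref{lemma: exponential decay}(2) to get exponentially small transition data. But an arbitrary $[A]\in\moduli_d$ need not be close to a flat connection anywhere near $t=\pm T$: generically $\norm{F_A}_{L^\infty(n,n+1)}\geq\nu$ for every $n$, so Lemma~\ref{lemma: exponential decay} is simply inapplicable and no choice of $s_\pm$ in a window of bounded width gives you a ``flat spot.'' If you cut $A$ off anyway, the resulting $F^+_{\tilde A}$ in the transition band is $O(1)$, not exponentially small, and the Newton iteration in your next step has no reason to converge. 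This is exactly the problem the Taubes/Donaldson instanton-approximation technique is designed to solve, and you have misread what that technique does: it is not about locating flat spots, it is about \emph{gluing many highly concentrated instantons into the transition band} so that the Taubes norm $\tnorm{F^+}$ of the cut-off connection becomes small even though its pointwise size is not. That gluing step is precisely what Lemma~\ref{lemma: gluing instantons} performs (producing $\hat A$ with $\tnorm{F^+(\hat A)}\leq\tau/2$), and it also serves a second purpose that your scheme cannot reproduce, namely forcing $\norm{F(\hat A)}_{L^\infty}$ to be bounded below near $t=\pm T$, which is how the paper guarantees (via (\ref{eq: G'(A') around T and -T})) that the weight function $W_{A'}$ is under control near the caps and hence that $[B']\in V_r(A')$.

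There is a second, subtler deviation that would cause trouble even with the gluing step supplied: you propose to solve $F^+=0$ by inverting the linearized ASD operator $D_{\tilde A}=d^+_{\tilde A}\oplus d^*_{\tilde A}$ in weighted spaces, noting yourself that the Weitzenb\"ock lower bound $D^*D\geq\nabla^*\nabla+2-C|F_{\tilde A}|$ only gives a gap of $2$ on bad intervals, and that the good intervals ``must be absorbed by patching local parametrices.'' This is a genuinely hard uniformity problem in the cardinality of $G(A,T)$, and the paper deliberately avoids it. The Taubes-norm scheme of Proposition~\ref{prop: instanton approximation preliminary} inverts only the \emph{scalar} operator $\nabla^*_A\nabla_A+2$, which has spectral gap $2$ on all of $X$ with no curvature correction at all; the nonlinear ASD equation is then closed up by the quadratic Taubes estimate (\ref{eq: quadratic Taubes estimate}). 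This is why the $e^{-\sqrt2\,\dist}$ decay of the scalar Green kernel (\ref{eq: exponential decay of Green kernel}) propagates cleanly to the estimate in part (2) without any parametrix-patching argument. Your route is not wrong in principle, but you have not shown how to carry out the patching uniformly in $T$ and in the number of good intervals, whereas the paper's route makes that issue disappear.
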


For two connections $A_1$ and $A_2$ on $E$, we set
\[ \dist_{L^\infty}([A_1],[A_2]) = \inf_{g:E\to E} \norm{g(A_1)-A_2}_{L^\infty(X)} ,\]
where $g$ runs over all gauge transformations of $E$.

\begin{proposition} \label{prop: quantitative deformation theory}
For any $D>0$ there exist positive numbers $r_0=r_0(D)$ and $C_0=C_0(D)$ satisfying the following statement.
Let $A$ be a non-flat instanton on $E$ with $\norm{F_A}_{L^\infty(X)}\leq D$.
Then for any $0<\varepsilon<1$ 
\[ \#_{\mathrm{sep}}(V_{r_0}(A),\dist_{L^\infty},\varepsilon) \leq (C_0/\varepsilon)^{8c_2(A)+3},\]
where 
\[ c_2(A) = \frac{1}{8\pi^2}\int_X |F_A|^2d\vol.\]
\end{proposition}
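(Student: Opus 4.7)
The plan is to realize $V_{r_0}(A)$ as a bounded subset of a finite-dimensional Banach space of dimension at most $8c_2(A)+3$ via a quantitative Kuranishi model, and then to invoke Example \ref{example: separated set of Banach ball}. The finite-dimensional target is the harmonic space $\mathcal{H}^1_A := \ker(d_A^+\oplus d_A^*)$ computed in the weighted Sobolev space attached to $\nnorm{\cdot}_{2,A}$ of Section \ref{subsection: setting}.

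First I would establish quantitative Coulomb gauge fixing in the weighted norms: for $r_0$ small (depending only on $D$), every $[B]\in V_{r_0}(A)$ can be represented as $B=A+a$ with $d_A^*a=0$ and $\nnorm{a}_{2,A}\lesssim r_0$. This is the implicit function theorem applied to $u\mapsto d_A^*(\exp(u)\cdot B-A)$; its linearization $d_A^*d_A$ on $\Omega^0(\ad E)$ is uniformly invertible in the weighted norm by the Weitzenb\"ock identity combined with the positive scalar curvature of $S^3$. In Coulomb gauge, the ASD equation reads $L_A a = -((a\wedge a)^+,0)$ with $L_A := d_A^+\oplus d_A^*$, and a second Weitzenb\"ock identity on $\Omega^1(\ad E)$ (using positive scalar curvature and ASD-ness of the background metric) gives coercivity of $L_A^*L_A$ modulo $\mathcal{H}^1_A$, hence a Green's operator $G_A$ bounded uniformly in $c_2(A)$. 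A standard contraction-mapping argument then makes the projection $\Phi([B]):=\pi_{\mathcal{H}^1_A}(a)$ an injection of $V_{r_0}(A)$ into a ball of radius $\lesssim r_0$ in $\mathcal{H}^1_A$.

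The dimension bound $\dim\mathcal{H}^1_A\leq 8c_2(A)+3$ comes from the Atiyah--Patodi--Singer index theorem on the cylinder with the exponential weight: positive scalar curvature kills both $H^0$ and $H^+$, so $\dim\mathcal{H}^1_A$ equals the index of $L_A$, which decomposes as $8c_2(A)$ from the bulk Pontryagin term plus $+3$ from the boundary correction associated to the three-dimensional $SU(2)$-stabilizer of the trivial flat limit at $\pm\infty$. Elliptic bootstrapping on unit intervals (on which $W_A$ is comparable to a constant) upgrades the weighted $L^2_2$-control of $a$ to $L^\infty$-control, so $\Phi$ is bi-Lipschitz between $(V_{r_0}(A),\dist_{L^\infty})$ and an $L^\infty$-equivalent norm on $\mathcal{H}^1_A$, with constants depending only on $D$. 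Feeding this into Example \ref{example: separated set of Banach ball} for a ball of radius $R\lesssim r_0$ in a Banach space of dimension at most $8c_2(A)+3$ yields the claimed $(C_0/\varepsilon)^{8c_2(A)+3}$.

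The main obstacle is uniformity in $c_2(A)$. All the Weitzenb\"ock coercivity estimates, the invertibility of $d_A^*d_A$ and $L_A^*L_A$, the bound on $G_A$, and the Lipschitz constants of $\Phi$ must depend only on $D$, not on $c_2(A)$ nor on $|G'(A)|$, each of which can be arbitrarily large. Positive scalar curvature supplies coercivity on good intervals, but on bad intervals $A$ is close to the reducible trivial flat connection and the curvature-dependent piece of the Weitzenb\"ock estimate degenerates. The weight $W_A$ is designed precisely to compensate: its growth between good intervals forces sections of small triple norm to be pointwise tiny on bad intervals, restoring the control that the curvature term would provide on good intervals. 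Carrying this trade-off uniformly through every elliptic estimate and through the APS index calculation, while controlling commutators with $W_A$ and its derivatives, is the technical heart of the argument.
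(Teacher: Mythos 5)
Your overall skeleton does match the paper: put $B$ into a (weighted) Coulomb slice around $A$, realize the slice as a graph over the harmonic space of dimension $8c_2(A)+3$, and count $\varepsilon$-separated points via Example \ref{example: separated set of Banach ball} together with Lemma \ref{lemma: separated set}.  However, the analytic mechanism you invoke for the uniformity is wrong, and this is precisely the hard part.

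You assert that a Weitzenb\"ock identity on $\Omega^1(\ad E)$, using positive scalar curvature and the ASD-ness of the metric, gives coercivity of $L_A^*L_A$ modulo harmonics uniformly in $c_2(A)$.  There is no such uniform estimate: on $X=\mathbb R\times S^3$ the Ricci curvature vanishes on the $\partial_t$ direction (so the curvature-of-$X$ term in the Bochner formula for $1$-forms is only semi-definite), and the $F_A\cdot$ term in the $1$-form Weitzenb\"ock has no favourable sign and depends on $A$.  The paper never uses a $1$-form Weitzenb\"ock.  It uses the self-dual $2$-form Weitzenb\"ock (\ref{eq: Weitzenbock formula}), $d_A^+d_A^*\phi = \tfrac12(\nabla_A^*\nabla_A + 2)\phi$, which for ASD $A$ has \emph{no} curvature term at all (the $F_A^+\cdot\phi$ term vanishes and $W^+=0$, $S=6$).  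This gives a uniformly bounded right inverse $P_A=d_A^*(d_A^+d_A^*)^{-1}$ of $d_A^+$ via the Green kernel of $\Delta+2$, whose exponential decay meshes with the subexponential weight $W_A$ (Lemma \ref{lemma: construction of P_A}).  Separately, the coercivity on $\Omega^0(\ad E)$ --- which is what you really need for gauge-fixing and which is where the good/bad structure enters --- has nothing to do with scalar curvature; it comes from the irreducibility/non-degeneracy estimate (\ref{eq: non-degeneracy estimate}) on good intervals combined with a direct weighted Poincar\'e computation on bad intervals (Lemma \ref{lemma: estimate on Omega^0}).  Attributing this to scalar curvature, and claiming it ``degenerates on bad intervals'', misses the actual mechanism: the $\Omega^+$ estimate never degenerates, only the $\Omega^0$ estimate does, and the weight fixes precisely that.

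Two further points.  You fix the Coulomb slice with the ordinary adjoint $d_A^*$; the paper uses the weighted adjoint $d_A^{*,W}$, which is what makes the decomposition $b=-d_Au+a+P_A\eta$ an $L^{2,W}$-orthogonal one and the space $H^{1,W}_A$ the correct finite-dimensional piece (Lemmas \ref{lemma: Coulomb gauge} and \ref{lemma: Hodge decomposition}).  And you claim $\Phi$ is bi-Lipschitz; only the one implication $\nnorm{f(B_1)-f(B_2)}$ small $\Rightarrow$ $\dist_{L^\infty}(B_1,B_2)$ small is needed for Lemma \ref{lemma: separated set}, and in fact that is all the paper proves (via Lemma \ref{lemma: solving ASD equation} (2)).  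So: right shape of argument, but the key claimed uniform Weitzenb\"ock estimate on $\Omega^1$ is false, and the real proof routes the uniformity through $\Omega^+$ for the ASD equation and through the good/bad + weight structure for the gauge-fixing.
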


The proofs of the above three propositions occupy the rest of the paper.
Here we prove Theorem \ref{thm: main theorem}, assuming them.

\begin{proof}[Proof of Theorem \ref{thm: main theorem}]
We define a distance on $\moduli_d$ by 
\[ \dist([A],[B]) = \inf_{g:E\to E}\norm{g(A)-B}_{L^\infty(0,1)},\]
where $g$ runs over all gauge transformations of $E$.
This is compatible with the given topology of $\moduli_d$.
Recall that for a subset $\Omega\subset \mathbb{R}$ we denote 
by $\dist_{\Omega}([A],[B])$ the supremum of 
$\dist([s^*A],[s^*B])$ over $s\in \Omega$.
We will prove the upper bound on the metric mean dimension: $\mmdim(\moduli_d,\dist:\mathbb{R})\leq 8\rho(d)$.
Then we get $\dim(\moduli_d:\mathbb{R})\leq 8\rho(d)$ since the metric mean dimension 
is an upper bound on the mean dimension (Theorem \ref{thm: metric mean dimension}).

Let $D_0>0$ be the universal constant introduced in Proposition \ref{prop: instanton approximation} (1), 
and let $r_0 = r_0(D_0)$ be the positive constant introduced in 
Proposition \ref{prop: quantitative deformation theory} with respect to $D_0$.
Moreover let $\delta_0=\delta_0(r_0(D_0))$ be the positive constant introduced in 
Proposition \ref{prop: instanton approximation} with respect to $r_0(D_0)$.
\begin{claim} \label{claim: counting in U_delta}
There exists $C_1>0$ satisfying the following statement.
For any $0<\varepsilon<1$ there exists an integer $L_0 = L_0(\varepsilon)>1$ 
such that for any integer $T>L_0$ and any $[A]\in \moduli_d$
we have 
\[ \log \#_{\mathrm{sep}}(U_{\delta_0}(A,T),\dist_{(-T+L_0,T-L_0)},\varepsilon) 
   \leq (|\log \varepsilon|+C_1)\left({\frac{1}{\pi^2}\int_{(-T,T)\times S^3}|F_A|^2d\vol + C_1}\right).\]
\end{claim}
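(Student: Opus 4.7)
The plan is to chain Propositions \ref{prop: instanton approximation} and \ref{prop: quantitative deformation theory} through Lemma \ref{lemma: separated set}. Given $[A]\in\moduli_d$ and $T>1$, Proposition \ref{prop: instanton approximation} (applied with $r=r_0(D_0)$, hence with $\delta_0=\delta_0(r_0(D_0))$) produces a non-flat instanton $A'$ with $\norm{F_{A'}}_{L^\infty(X)}\leq D_0$ and a map $\Phi:U_{\delta_0}(A,T)\to V_{r_0}(A')$. Proposition \ref{prop: quantitative deformation theory} then bounds the separation number of the target in $\dist_{L^\infty}$. Thus the whole game is to find $L_0=L_0(\varepsilon)$ and $\eta=\eta(\varepsilon)$ for which
\[ \dist_{L^\infty}\bigl(\Phi([B_1]),\Phi([B_2])\bigr)\leq\eta \;\Longrightarrow\; \dist_{(-T+L_0,\,T-L_0)}([B_1],[B_2])\leq\varepsilon, \]
after which Lemma \ref{lemma: separated set} transfers the count from $V_{r_0}(A')$ back to $U_{\delta_0}(A,T)$.

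The required implication will follow from a three-term triangle inequality that inserts the approximations $[B_i']:=\Phi([B_i])$. For the middle piece, any gauge $h$ almost realizing $\dist_{L^\infty}([B_1'],[B_2'])$ can be translated by $s$ to yield a gauge transformation over $(0,1)\times S^3$ whose defect is dominated by $\norm{h(B_1')-B_2'}_{L^\infty(X)}$; taking the infimum over $h$ and the supremum over $s$ gives $\dist_{\mathbb{R}}([B_1'],[B_2'])\leq\dist_{L^\infty}([B_1'],[B_2'])$. For the two outer pieces, Proposition \ref{prop: instanton approximation} (2) furnishes gauges $g_i$ with $|g_i(B_i')-B_i|(t,\cdot)\lesssim e^{-\sqrt{2}|t-T|}+e^{-\sqrt{2}|t+T|}$ on $|t|<T-1$; translating $g_i$ by $s\in(-T+L_0,T-L_0)$ and taking the $L^\infty((0,1)\times S^3)$-norm shows $\dist([s^*B_i],[s^*B_i'])\lesssim e^{-\sqrt{2}L_0}$ uniformly in $s$. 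Combining,
\[ \dist_{(-T+L_0,T-L_0)}([B_1],[B_2])\leq \dist_{L^\infty}\bigl(\Phi([B_1]),\Phi([B_2])\bigr)+C_2\,e^{-\sqrt{2}L_0} \]
for a universal $C_2>0$, so selecting $L_0:=\lceil\tfrac{1}{\sqrt{2}}\log(2C_2/\varepsilon)\rceil$ and $\eta:=\varepsilon/2$ delivers the implication.

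Feeding this into Lemma \ref{lemma: separated set} and Proposition \ref{prop: quantitative deformation theory},
\[ \#_{\mathrm{sep}}(U_{\delta_0}(A,T),\dist_{(-T+L_0,T-L_0)},\varepsilon)\leq (C_0/\eta)^{8c_2(A')+3}. \]
Taking logarithms, using $8c_2(A')=\tfrac{1}{\pi^2}\int_X|F_{A'}|^2 d\vol$ and replacing this by $\tfrac{1}{\pi^2}\int_{(-T,T)\times S^3}|F_A|^2 d\vol+O(1)$ via Proposition \ref{prop: instanton approximation} (1), and then absorbing $\log(2C_0)$ together with the additive energy defect into a single constant $C_1$, yields the stated bound. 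The only delicate point is bookkeeping: $L_0$ must grow like $|\log\varepsilon|$ to swamp the exponential tail, while remaining independent of $T$. This is harmless in the subsequent mean-dimension argument, since the endpoint loss $(-T,T)\setminus(-T+L_0,T-L_0)$ has length $2L_0=O(|\log\varepsilon|)$, which is negligible on the scale $2T$ when the limit $T\to\infty$ is taken before $\varepsilon\to 0$.
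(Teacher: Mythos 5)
Your proposal is correct and follows essentially the same route as the paper: apply Proposition~\ref{prop: instanton approximation} to pass from $U_{\delta_0}(A,T)$ to $V_{r_0}(A')$, use the exponential tail in condition~(2) to choose $L_0(\varepsilon)$ so that the map becomes a distance-controlling map in the sense of Lemma~\ref{lemma: separated set} for the metric $\dist_{(-T+L_0,T-L_0)}$, and then invoke Proposition~\ref{prop: quantitative deformation theory} together with $8c_2(A')=\tfrac{1}{\pi^2}\int_X|F_{A'}|^2\,d\vol$ and condition~(1). The only differences from the paper's proof are cosmetic (the paper uses $\varepsilon/3$ where you use $\varepsilon/2$, and it keeps the gauge bookkeeping slightly more implicit).
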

\begin{proof}
By Proposition \ref{prop: instanton approximation} for any $[A]\in \moduli_d$ and any integer $T>1$ there
exist a non-flat instanton $[A']$ and a map 
\[ U_{\delta_0}(A,T)\to V_{r_0}(A'), \quad [B]\mapsto [B'] \]
satisfying the conditions (1) and (2) of the statement there.
If we choose $L_0=L_0(\varepsilon)>0$ sufficiently large, then (by the condition (2)) 
for any $[B]\in U_{\delta_0}(A,T)$ there exists 
a gauge transformation $g$ of $E$ over $|t|<T-1$ satisfying 
\[ |g(B')-B| <\varepsilon/3 \quad (|t|<T-L_0+1).\]
Then for any $[B_1],[B_2]\in U_{\delta_0}(A,T)$ with $T>L_0$ we get 
\[  \dist_{L^\infty}([B_1'],[B_2']) \leq  \varepsilon/3  \Longrightarrow 
    \dist_{(-T+L_0,T-L_0)}([B_1],[B_2]) \leq \varepsilon.\]
By Lemma \ref{lemma: separated set} 
\begin{equation*}
   \begin{split}
   \#_{\mathrm{sep}}(U_{\delta_0}(A,T),\dist_{(-T+L_0,T-L_0)},\varepsilon) &\leq 
   \#_{\mathrm{sep}}(V_{r_0}(A'),\dist_{L^\infty},\varepsilon/3) \\
   &\leq (3C_0/\varepsilon)^{8c_2(A')+3} \quad (\text{by Proposition \ref{prop: quantitative deformation theory}}).
   \end{split}
\end{equation*}
By the condition (1) of Proposition \ref{prop: instanton approximation} 
\[ 8c_2(A') +3 \leq \frac{1}{\pi^2} \int_{(-T,T)\times S^3}|F_A|^2 d\vol + \const,\]
where $\const$ is a universal constant.
Thus we get the conclusion.
\end{proof}

Take $0<\varepsilon<1$ and let $L_0=L_0(\varepsilon)>0$ be the positive number introduced in the above 
claim.
By Proposition \ref{prop: decomposition} for any integer $T>1$ there exist 
$[A_1],\dots,[A_n]\in \moduli_d$ satisfying 
\[ \log n \lesssim T+L_0, \quad \moduli_d = \bigcup_{i=1}^n U_{\delta_0}(A_i,T+L_0).\]
Then  $\#(\moduli_d,\dist_{(-T,T)},\varepsilon)$ is bounded by 
\[  \sum_{i=1}^n \#(U_{\delta_0}(A_i,T+L_0),\dist_{(-T,T)},\varepsilon) \leq 
    \sum_{i=1}^n \#_{\mathrm{sep}}(U_{\delta_0}(A_i,T+L_0),\dist_{(-T,T)},\varepsilon/3).\]
By Claim \ref{claim: counting in U_delta},
$\log \#(\moduli_d,\dist_{(-T,T)},\varepsilon)$ is bounded by 
\[   \log n + (|\log\varepsilon|+\log 3 + C_1)\left(\frac{1}{\pi^2}\sup_{[A]\in \moduli_d}
   \int_{(-T-L_0,T+L_0)\times S^3} |F_A|^2d\vol + C_1\right). \]
Since $\log n\lesssim T+L_0$ and $L_0$ does not depend on $T$, we get 
(by using Lemma \ref{lemma: another form of energy density})
\[ S(\moduli_d,\dist,\varepsilon) 
  = \lim_{T\to \infty} \frac{\log \#(\moduli_d,\dist_{(-T,T)},\varepsilon)}{2T}
  \leq \const + (|\log\varepsilon|+\log 3 + C_1)8\rho(d).\]
Here $\const$ and $C_1$ are independent of $\varepsilon$. Thus 
\[ \mmdim(\moduli_d,\dist:\mathbb{R}) 
  = \liminf_{\varepsilon \to 0}\frac{S(\moduli_d,\dist,\varepsilon)}{|\log\varepsilon|} \leq 8\rho(d).\]
\end{proof}

\section{Decomposition of $\moduli_d$: proof of Proposition \ref{prop: decomposition}}
\label{section: decomposition of moduli_d}

We prove Proposition \ref{prop: decomposition} in this section.
A theme of this section is a problem of gluing gauge transformations.
A simplified situation is the following:
Let $[A],[B]\in \moduli_d$. Let $U_1,U_2\subset X$ be open sets, 
and let $g_i$ be gauge transformations of $E$ over $U_i$ ($i=1,2$).
Suppose $|g_i(B)-A|$ are very small over $U_i$ for both $i=1,2$. 
Can we find a gauge transformation $h$ of $E$ over $U_1\cup U_2$ satisfying $|h(B)-A|\ll 1$?
Unfortunately the answer is \textit{No} in general.
If $A$ and $B$ are very close to flat connections over $U_1\cap U_2$, then we have to consider a \textit{gluing parameter} 
over $U_1\cap U_2$ and cannot find such a gauge transformation $h$. 
(This phenomena appears in constructions of gluing instantons.
See \cite[Chapter 7.2]{Donaldson--Kronheimer}.)
In Lemmas \ref{lemma: gluing gauge transformations non-degenerate} 
and \ref{lemma: gluing gauge transformations degenerate} below
we formulate situations where the answer to the above question becomes \textit{Yes}.

The following is a basis of the argument.
This is proved in \cite[Corollary 6.3]{Matsuo--Tsukamoto}.
\begin{lemma} \label{lemma: non-flat implies irreducible}
Let $A$ be a non-flat ASD connection on $E$ with $\norm{F_A}_{L^\infty}<\infty$.
Then $A$ is irreducible, i.e. if a gauge transformation $g$ satisfies $g(A)=A$ then 
$g=\pm 1$.
\end{lemma}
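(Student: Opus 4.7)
The plan is to argue by contradiction. Suppose $g$ is a gauge transformation with $g(A)=A$ and $g\neq\pm 1$; I will extract a $d_A$-parallel nonzero section of $\ad E$, thereby reduce $A$ to an abelian connection, and then use a Weitzenb\"ock identity together with the positivity of the scalar curvature to force the curvature to vanish, contradicting non-flatness. Viewing $g$ as a section of the fibre bundle with fibre $SU(2)$ associated to $E$ by conjugation, taking the trace gives $\mathrm{tr}\,g = 2\cos\theta$, and $d_A g = 0$ (equivalent to $g(A)=A$) implies $d\theta = 0$, so $\theta$ is constant on the connected manifold $X$. The traceless part $g - \cos\theta\cdot 1$ has pointwise norm $|\sin\theta|$; because $g \neq \pm 1$ this is a nonzero constant, and normalising produces a $d_A$-parallel unit section $\xi$ of $\ad E$. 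Differentiating $d_A\xi = 0$ once more yields $[F_A,\xi]=0$ pointwise, and since the centralizer in $su(2)$ of any nonzero element is one-dimensional, this forces $F_A = \alpha\otimes\xi$ for a real-valued 2-form $\alpha$ on $X$.

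Next, translate the ASD and Bianchi conditions for $F_A$ into conditions on $\alpha$: the relation $*F_A = -F_A$ becomes $*\alpha = -\alpha$, and $d_A F_A = 0$ together with $d_A\xi = 0$ becomes $d\alpha = 0$; combined these give $d^*\alpha = 0$. Hence $\alpha$ is a bounded harmonic ASD 2-form on the cylinder $X = \mathbb{R}\times S^3$. Because $X$ is ASD ($W^+ = 0$) with uniformly positive scalar curvature $s$, the Weitzenb\"ock formula for harmonic ASD 2-forms reads
\[
 \nabla^*\nabla\alpha + \tfrac{s}{3}\alpha - 2W^-(\alpha) = 0,
\]
and the Bochner identity converts this into a pointwise differential inequality
\[
 \tfrac{1}{2}\Delta|\alpha|^2 \;\geq\; |\nabla\alpha|^2 + c\,|\alpha|^2
\]
for a constant $c>0$ depending on the uniform lower bound of $s/3$ and the uniform bound on $|W^-|$, both of which are available because the metric on $X$ is translation invariant in the $\mathbb{R}$-direction.

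Finally, $|\alpha|^2$ is a non-negative bounded smooth function on the complete manifold $X$, whose geometry has uniformly bounded sectional curvature and positive injectivity radius. Applying the Omori--Yau maximum principle to the above inequality produces a sequence $p_k\in X$ along which $|\alpha|^2(p_k)\to\sup|\alpha|^2$ and $\Delta|\alpha|^2(p_k)\to 0$; combined with $\Delta|\alpha|^2 \geq 2c|\alpha|^2$ this forces $\sup|\alpha|^2 = 0$, hence $F_A \equiv 0$, contradicting the hypothesis that $A$ is non-flat. The main obstacle is precisely this last step: we have only an $L^\infty$ bound on $F_A$, not an $L^2$ bound, so the clean integration-by-parts argument available on closed ASD positive-scalar-curvature 4-manifolds is unavailable; replacing it with a pointwise maximum-principle argument is what makes the cylindrical structure of $X$ (equivalently, the $\mathbb{R}$-translation invariance of the metric) genuinely essential to the argument.
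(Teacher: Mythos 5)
Your argument is correct and follows the same skeleton as the paper's sketch: reduce to an abelian connection, invoke the Weitzenb\"ock identity for (anti-)self-dual $2$-forms on the conformally flat cylinder (scalar curvature $6$ and vanishing Weyl tensor give the strictly positive zeroth-order term $+2$), and then run a Liouville-type argument to kill the bounded curvature. Where you diverge is the Liouville step: the paper concludes $F_A\equiv 0$ from the uniqueness of bounded solutions of $(\nabla^*\nabla+2)\phi=\eta$, which it gets from the explicit Green kernel of $\Delta+2$ on $\mathbb{R}\times S^3$ and its exponential decay; you instead apply the Omori--Yau maximum principle to $|\alpha|^2$. Both steps exploit bounded geometry of the cylinder, but yours is somewhat more self-contained (it needs only completeness and a Ricci lower bound, not the kernel construction from \cite[Appendix]{Matsuo--Tsukamoto}), at the cost of quoting a slightly larger analytic black box. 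You also spell out the reduction that the paper leaves to the reader, extracting a parallel unit section $\xi$ of $\ad E$ from a nontrivial stabilizing $g$ and deducing $F_A=\alpha\otimes\xi$ from $[F_A,\xi]=0$; that is worth having in full. Two small points: your $\Delta$ is the analyst's Laplacian, opposite in sign to the paper's $\nabla^*\nabla$, which is fine as long as it is flagged; and the constant $c>0$ in your Bochner inequality does not come from ``the uniform bound on $|W^-|$'' but from $\tfrac{s}{3}-2W^-$ being pointwise positive, which here holds because the cylinder is conformally flat and $W^-\equiv 0$ (the hypothesis $W^+=0$ alone would not give you the sign you need for ASD forms).
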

\begin{proof}
We give a sketch of the proof for the convenience of readers. 
Suppose $A$ is reducible. 
Then $A$ is reduced to a $U(1)$ connection.
In particular $F_A$ is a $u(1)$-valued anti-self-dual 2-form.
Using the Yang--Mills equation $d_A^*F_A=0$ and the Weitzenb\"{o}ck formula 
(see (\ref{eq: Weitzenbock formula}) in Section \ref{section: instanton approximation}),
we get 
\[ (\nabla^*\nabla + 2)F_A=0.\]
Here we have used the fact that the curvature $F_A$ does not contribute to the formula because 
it is $u(1)$-valued.
Then $\norm{F_A}_{L^\infty}<\infty$ implies $F_A=0$ all over $X$. 
See discussions around (\ref{eq: Green kernel estimate}).
\end{proof}

The next lemma means that we have a good control of gauge transformations over ``good intervals''.
\begin{lemma} \label{lemma: non-degeneracy estimate}
Let $\kappa>0$ and let $[A]\in \moduli_d$ with $\norm{F_A}_{L^\infty(0,1)}\geq \kappa$.
For any gauge transformation $g$ of $E$ over $0<t<1$ we have 
\[ \min_{\pm} \norm{g\pm 1}_{L^\infty(0,1)}\lesssim_{\kappa} \norm{d_A g}_{L^2_{2,A}(0,1)}.\]
\end{lemma}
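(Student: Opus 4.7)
The plan is to argue by contradiction, combining compactness of $\moduli_d$, the irreducibility of non-flat ASD connections (Lemma \ref{lemma: non-flat implies irreducible}), and a rescaling analysis. Suppose the conclusion fails; then there exist $[A_n]\in \moduli_d$ with $\norm{F_{A_n}}_{L^\infty(0,1)}\geq \kappa$ and gauge transformations $g_n$ over $(0,1)\times S^3$ satisfying
\[
\alpha_n := \min_{\pm}\norm{g_n\pm 1}_{L^\infty(0,1)} > n\cdot \norm{d_{A_n}g_n}_{L^2_{2,A_n}(0,1)}.
\]
Since $\pm 1$ is central, after multiplying $g_n$ by the minimizing sign I may assume $\alpha_n=\norm{g_n-1}_{L^\infty(0,1)}$ and $\norm{g_n+1}_{L^\infty(0,1)}\geq \alpha_n$. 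Using compactness of $\moduli_d$, a subsequence satisfies $[A_n]\to [A_\infty]$; by the very definition of the topology, there are gauges $h_n$ over a neighborhood of $[0,1]\times S^3$ with $h_n(A_n)\to A_\infty$ in $C^\infty$, and $A_\infty$ is an ASD connection on $X$ with $\norm{F_{A_\infty}}_{L^\infty(0,1)}\geq \kappa>0$, hence non-flat and irreducible.

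The key step is to rescale: let $v_n := (g_n-1)/\alpha_n$, a section of the trivial bundle $\underline{M_2(\mathbb{C})}$. Because $\pm 1$ is $d_{A_n}$-parallel, $d_{A_n} v_n = \alpha_n^{-1} d_{A_n} g_n$, so $\norm{v_n}_{L^\infty(0,1)}=1$ (attained) while $\norm{d_{A_n}v_n}_{L^2_{2,A_n}(0,1)} < 1/n\to 0$. For $\tilde v_n := h_n v_n h_n^{-1}$, the pointwise inequality $|d|\tilde v_n|^2|\leq 2|\tilde v_n|\cdot |\nabla_{h_n(A_n)}\tilde v_n|$, together with the Sobolev embedding $L^2_2\hookrightarrow L^p$ (for all $p<\infty$) on the four-manifold $(0,1)\times S^3$ and Morrey's inequality, shows that the H\"older seminorm of $|\tilde v_n|^2$ tends to zero; combined with $\sup |\tilde v_n|^2 = 1$ this forces $|\tilde v_n|^2\to 1$ \emph{uniformly on} $(0,1)\times S^3$. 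Elliptic bootstrapping from $d\tilde v_n = -[h_n(A_n),\tilde v_n]+d_{h_n(A_n)}\tilde v_n$ bounds $\tilde v_n$ in every Sobolev norm on compact subsets of $(0,1)\times S^3$, so a subsequence converges in $C^0_{\mathrm{loc}}$ to an $A_\infty$-parallel $v_\infty$ with $|v_\infty|\equiv 1$. Extending $v_\infty$ by $A_\infty$-parallel transport to the simply connected manifold $X$ and using irreducibility of $A_\infty$ (parallel sections of $\underline{M_2(\mathbb{C})}$ for an irreducible $SU(2)$-connection lie in the center $\mathbb{C}\cdot I$), I deduce $v_\infty = c\cdot I$ with $|c|=1$.

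The contradiction splits into two regimes. If $\alpha_n\to \alpha_\infty>0$ along a subsequence, then $g_n = 1+\alpha_n v_n\to (1+\alpha_\infty c) I$ on compact subsets of $(0,1)\times S^3$, and $SU(2)$-membership forces $1+\alpha_\infty c \in \{\pm 1\}$; the case $c=0$ contradicts $|c|=1$, while $c=-2/\alpha_\infty$ combined with $|c|=1$ forces $\alpha_\infty = 2$ and $g_n\to -I$ locally. But $\norm{g_n+1}_{L^\infty(0,1)}\geq \alpha_n\to 2$ yields points $p_n\in (0,1)\times S^3$ at which $g_n(p_n)\to I$ (the unique $\xi\in SU(2)$ with $|\xi+I|_{\mathrm{op}}=2$), giving $|v_n(p_n)| = |g_n(p_n)-1|/\alpha_n\to 0$, in conflict with the uniform limit $|v_n|^2\to 1$. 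If instead $\alpha_n\to 0$, then $g_n\to 1$ uniformly, so for large $n$ I write $g_n = \exp(u_n)$ with $u_n\in \Omega^0(\ad E)$ and $\norm{u_n}_{L^\infty}=O(\alpha_n)$; then $v_n = u_n/\alpha_n + O(\alpha_n)$ is a limit of $su(2)$-valued sections, so $v_\infty$ is $su(2)$-valued. Since $su(2)\cap \mathbb{C}\cdot I = \{0\}$, this contradicts $|v_\infty| = 1$. The most delicate step is ensuring the rescaled limit is globally non-trivial, i.e.\ that $|v_n|^2\to 1$ holds uniformly \emph{up to the boundary} $\{0,1\}\times S^3$: the supremum defining $\alpha_n$ might a priori be attained arbitrarily close to that boundary, and the Morrey-type propagation applied to the scalar function $|v_n|^2$ is precisely what averts this difficulty.
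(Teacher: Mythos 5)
Your proof is correct, but it takes a genuinely different route than the paper's. The paper first states a \emph{linearized} claim about sections $u$ of $\ad E$ (namely $\norm{u}_{L^\infty}\lesssim_\kappa\norm{d_A u}_{L^2_2}$), proves it by contradiction using compactness of $\moduli_d$, weak $L^2_3$ convergence, the \emph{compact} Rellich embedding $L^2_3\hookrightarrow L^\infty$ on the bounded domain $(0,1)\times S^3$, and the unique continuation theorem, and then deduces the nonlinear statement about $g$ by citing \cite[Lemma~3.2]{Matsuo--Tsukamoto 2} for the standard $g=\exp(u)$ reduction. You instead argue by contradiction directly at the level of gauge transformations, rescaling $v_n=(g_n-1)/\alpha_n$ rather than $\log g_n$; this forces you to handle both regimes $\alpha_n\to 0$ (where you do eventually invoke $\log$) and $\alpha_n\to\alpha_\infty>0$ (where you exploit the $SU(2)$-structure of the limit), which the cited reduction packages away. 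Two further differences: (i) to ensure the limit $v_\infty$ is nonzero up to the boundary, you use a Morrey/H\"older estimate on $|v_n|^2$; the shorter route, which the paper uses and which also works for you, is to note that $v_n$ is uniformly bounded in $L^2_3([0,1]\times S^3)$ and then apply the compact Rellich embedding $L^2_3\hookrightarrow C^0(\overline{(0,1)\times S^3})$. (ii) You replace the reducibility-plus-unique-continuation step by extending the parallel endomorphism $v_\infty$ from $(0,1)\times S^3$ to all of $X$ by parallel transport (legitimate since $X$ is simply connected) and applying Schur's lemma there; this is an equally valid and arguably more elementary way to use Lemma~\ref{lemma: non-flat implies irreducible}. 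The upshot is a proof that is more self-contained than the paper's (no external reference for the nonlinear reduction) at the cost of a longer endgame case analysis.
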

\begin{proof}
It is standard that we can deduce this kind of statement from the following linearized one.
(For the detail, see \cite[Lemma 3.2]{Matsuo--Tsukamoto 2}.)

\begin{claim} \label{claim: linear non-degeneracy estimate}
Let $u$ be a section of $\ad E$ over $0<t<1$. Then 
\[ \norm{u}_{L^\infty(0,1)}\lesssim_\kappa \norm{d_A u}_{L^2_{2,A}(0,1)}.\]
\end{claim}
\begin{proof}
Suppose the contrary. Then there exist $[A_n]\in \moduli_d$ with 
$\norm{F_{A_n}}_{L^\infty(0,1)}\geq \kappa$ and $u_n\in \Gamma((0,1)\times S^3,\ad E)$ 
$(n\geq 1)$ satisfying 
\[ \norm{d_{A_n}u_n}_{L^2_{2,A_n}(0,1)} < \frac{1}{n}, \quad \norm{u_n}_{L^\infty(0,1)} =1.\]
Since $\moduli_d$ is compact, we can assume that $A_n$ converges to some $A$
with $\norm{F_A}_{L^\infty(0,1)}\geq \kappa$ in $C^\infty$ over every compact subset.
Then $\{u_n\}$ is bounded in $L^2_{3,A}((0,1)\times S^3)$. By choosing a subsequence, we can assume that 
$u_n$ weakly converges to some $u$ in $L^2_{3,A}((0,1)\times S^3)$ with $d_A u=0$.
We have $\norm{u}_{L^\infty(0,1)}=1$ because
the Sobolev embedding $L^2_{3,A}((0,1)\times S^3)\to L^\infty((0,1)\times S^3)$ is compact.
This means that $A$ is reducible over $0<t<1$.
By the unique continuation theorem (Donaldson--Kronheimer \cite[Chapter 4, Lemma 4.3.21]{Donaldson--Kronheimer}) 
$A$ is reducible all over $X$.
This contradicts Lemma \ref{lemma: non-flat implies irreducible}.
\end{proof}
\end{proof}

In the next two lemmas we formulate situations where we can glue two gauge transformations.
In the first lemma, an overlapping region is ``good''. The argument is straightforward.
In the second lemma, an overlapping region is ``bad''. Our formulation have to be more involved.

\begin{lemma} \label{lemma: gluing gauge transformations non-degenerate}
For any $\kappa, \delta>0$ we can choose $\varepsilon_1 = \varepsilon_1(\kappa,\delta)>0$ so that the following 
statement holds.
Let $[A],[B]\in \moduli_d$, and let $g_1$ and $g_2$ be gauge transformations of $E$ over 
$0<t<2$ and $1<t<3$ respectively. Suppose
\[ \norm{F_A}_{L^\infty(1,2)}\geq \kappa, \quad 
   \norm{g_i(B)-A}_{L^2_{10,A}(1,2)} < \varepsilon_1 \> (i=1,2).\]
Then there exists a gauge transformation $h$ of $E$ over $0<t<3$ such that 
$h=g_1$ over $0<t<1$, $h=\pm g_2$ over $2<t<3$ and 
\[ \norm{h(B)-A}_{L^2_{10,A}(1,2)} < \delta.\]
\end{lemma}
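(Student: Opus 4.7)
The plan is to compare the two gauge transformations directly via $\tilde g := g_2 g_1^{-1}$, which is a gauge transformation of $E$ over the overlap $(1,2)\times S^3$. Since both $g_i(B)$ are close to $A$ in $L^2_{10,A}(1,2)$, the element $\tilde g$ should be close to $\pm 1$ on this good interval by Lemma \ref{lemma: non-degeneracy estimate}. Once this is established, I glue by an exponential interpolation.

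First, compute $\tilde g(A) - A$. From $\tilde g(g_1(B)) = g_2(B)$ and the gauge-action formula $\tilde g(C) = \tilde g C \tilde g^{-1} - (d\tilde g)\tilde g^{-1}$, one obtains
\[
\tilde g(A) - A = \bigl(g_2(B) - A\bigr) + \tilde g\,\bigl(A - g_1(B)\bigr)\,\tilde g^{-1}.
\]
Since $L^2_{10,A}$ on the $4$-dimensional region $(1,2)\times S^3$ is a Sobolev algebra, this gives $\norm{\tilde g(A) - A}_{L^2_{10,A}(1,2)} \lesssim \varepsilon_1$. Because $d_A \tilde g = -(\tilde g(A)-A)\tilde g$, it follows that $\norm{d_A \tilde g}_{L^2_{10,A}(1,2)} \lesssim \varepsilon_1$. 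Applying Lemma \ref{lemma: non-degeneracy estimate} (using $\norm{F_A}_{L^\infty(1,2)}\geq \kappa$) yields $\min_\pm \norm{\tilde g \pm 1}_{L^\infty(1,2)} \lesssim_\kappa \varepsilon_1$. After possibly replacing $g_2$ by $-g_2$ (still a gauge transformation), assume the sign is $+$. A standard bootstrap, using that $\nabla \tilde g = d_A \tilde g$ in the endomorphism convention so that higher Sobolev norms of $\tilde g - 1$ are controlled by $\norm{\tilde g - 1}_{L^\infty}$ together with $\norm{d_A \tilde g}_{L^2_{10,A}}$, then upgrades this to $\norm{\tilde g - 1}_{L^2_{11,A}(1,2)} \lesssim_\kappa \varepsilon_1$. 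Writing $\tilde g = \exp(u)$ with $u$ a section of $\ad E$, I get $\norm{u}_{L^2_{11,A}(1,2)} \lesssim_\kappa \varepsilon_1$.

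Next, interpolate. Fix once and for all a smooth cutoff $\chi:\mathbb{R}\to[0,1]$ with $\chi\equiv 0$ on $(-\infty, 5/4]$ and $\chi\equiv 1$ on $[7/4, \infty)$, and define
\[
h := \begin{cases} g_1 & \text{on } (0, 5/4], \\ \exp(\chi u)\, g_1 & \text{on } (5/4, 7/4), \\ \pm g_2 & \text{on } [7/4, 3). \end{cases}
\]
This matches smoothly at $t=5/4$ (where $\chi=0$, so $\exp(\chi u)=1$) and at $t=7/4$ (where $\chi=1$, so $\exp(\chi u)\,g_1 = \tilde g\, g_1 = \pm g_2$). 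To bound $\norm{h(B) - A}_{L^2_{10,A}(1,2)}$ on the interpolation region, set $\phi := \exp(\chi u)$ and decompose
\[
h(B) - A = \phi\,(g_1(B) - A)\,\phi^{-1} + (\phi(A) - A).
\]
The first term is controlled by a universal multiple of $\norm{g_1(B)-A}_{L^2_{10,A}}\lesssim \varepsilon_1$, and the second equals $-(d_A \phi)\phi^{-1}$, whose $L^2_{10,A}$-norm is bounded via Sobolev multiplication by the fixed $C^\infty$-norm of $\chi$ times $\norm{u}_{L^2_{11,A}(1,2)}\lesssim_\kappa \varepsilon_1$. Therefore $\norm{h(B) - A}_{L^2_{10,A}(1,2)} \leq C(\kappa)\,\varepsilon_1$, and choosing $\varepsilon_1 = \varepsilon_1(\kappa,\delta)$ with $C(\kappa)\varepsilon_1 < \delta$ completes the argument.

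The main obstacle I expect is the Sobolev bookkeeping: Lemma \ref{lemma: non-degeneracy estimate} only gives $L^\infty$-closeness of $\tilde g$ to $\pm 1$, while the interpolation step needs $L^2_{11,A}$-closeness so that the residual term $(d_A\phi)\phi^{-1}$ is small in $L^2_{10,A}$. The bootstrap itself is standard, but the constants must depend only on $\kappa$, not on the particular $[A],[B]\in\moduli_d$—this uses the Sobolev algebra property of $L^2_{10,A}$ in dimension four together with the uniform curvature bound implicit in $\moduli_d$.
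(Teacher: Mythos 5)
Your proof is correct and follows essentially the same route as the paper: form $w = g_2 g_1^{-1}$, bound $d_A w$ in $L^2_{10,A}(1,2)$ from the hypotheses, invoke Lemma \ref{lemma: non-degeneracy estimate} to get $w$ close to $\pm 1$, write $w = e^u$ with $\norm{u}_{L^2_{11,A}}$ small after a bootstrap, and interpolate via $h = e^{\chi u}g_1$ with a cutoff supported in the good interval. Your more careful bookkeeping of the cutoff's support (inside $(5/4,7/4)$, so matching occurs inside $(1,2)$) is in fact cleaner than the paper's stated choice of $\varphi$, which contains a minor typo in the location of $\supp(d\varphi)$.
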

\begin{proof}
Set $w=g_2 g_1^{-1}$ over $1<t<2$. We have 
$d_A w = w\cdot (g_1(B)-A)+(A-g_2(B))\cdot w$. Hence 
$\norm{d_A w}_{L^2_{10,A}(1,2)}\lesssim \varepsilon_1$.
By Lemma \ref{lemma: non-degeneracy estimate} we get 
$\min_\pm \norm{w\pm 1}_{L^\infty(1,2)}\lesssim_\kappa \varepsilon_1$.
We can assume $\norm{w-1}_{L^\infty(1,2)}\leq \norm{w+1}_{L^\infty(1,2)}$.
Then $\norm{w-1}_{L^\infty(1,2)}\lesssim_\kappa \varepsilon_1 \ll 1$.
Thus $w$ is expressed as $w=e^u$ with $\norm{u}_{L^2_{11,A}(1,2)}\lesssim_\kappa \varepsilon_1$.
Take a cut-off $\varphi:\mathbb{R}\to [0,1]$ such that $\supp(d\varphi)\subset (0,1)$, $\varphi(0)=0$ 
and $\varphi(1)=1$.
We set $h=e^{\varphi u} g_1$. If we choose $\varepsilon_1$ sufficiently small, then this satisfies the statement.
\end{proof}

In the rest of this section we take and fix a point $\theta_0\in S^3$.
Recall that we introduced the positive constant $\nu$ 
in Lemma \ref{lemma: exponential decay}.
\begin{lemma} \label{lemma: gluing gauge transformations degenerate}
For any $\delta>0$ we can choose positive numbers $\varepsilon_2=\varepsilon_2(\delta)$ and 
$L_1=L_1(\delta)$ so that the following statement holds.
Take $[A],[B]\in \moduli_d$, an integer $T\geq 2L_1$ and gauge transformations 
$g_1$ and $g_2$ over $-1<t<L_1$ and $T-L_1<t<T+1$ respectively.
Suppose the following three conditions.
\begin{itemize}
 \item $\norm{F_A}_{L^\infty(0,T)}, \norm{F_B}_{L^\infty(0,T)}<\nu$, 
 $\norm{F_A}_{L^\infty(-1,0)}, \norm{F_A}_{L^\infty(T,T+1)}\geq \nu$.
 \item $\norm{g_1(B)-A}_{L^2_{10,A}(0,L_1)}, \norm{g_2(B)-A}_{L^2_{10,A}(T-L_1,T)}<\varepsilon_2$.
 \item Set $p=(L_1-1,\theta_0), q=(T-L_1+1,\theta_0)\in X$ and define 
 $g'_2(q):E_p \to E_p$ by the following commutative diagram:
  \begin{equation*}
     \begin{CD}
     E_p @>{\text{parallel translation by $B$}}>> E_q \\
     @VV{g'_2(q)}V @VV{g_2(q)}V \\
     E_p @>{\text{parallel translation by $A$}}>> E_q
     \end{CD}
  \end{equation*}
 Here the horizontal arrows are the parallel translations by $B$ and $A$ along the minimum geodesic between 
 $p$ and $q$. 
 Under these settings, we have 
 \[ \min_{\pm} \dist_{SU(2)}(g_1(p),\pm g'_2(q)) < \varepsilon_2,\]
 where $\dist_{SU(2)}$ is the distance on $SU(2)$ defined by the standard Riemannian structure.
\end{itemize}
Then there exists a gauge transformation $h$ of $E$ over $-1<t<T+1$ such that 
$h=g_1$ over $-1<t<0$, $h=\pm g_2$ over $T<t<T+1$ and 
\[ e^{\min(n+1,T-n)}\norm{h(B)-A}_{L^2_{10,A}(n,n+1)} < \delta \]
for all integers $0\leq n\leq T-1$.
\end{lemma}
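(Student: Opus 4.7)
The plan exploits that both $A$ and $B$ have curvature $<\nu$ on all of $(0,T)$: by Lemma \ref{lemma: exponential decay}(2), each admits a temporal gauge in which it is exponentially close to the product connection, and any gauge transformation intertwining the two is then forced to be close to a constant $SU(2)$-element in the deep bulk. The third hypothesis states precisely that the constants read off at the matching points $p=(L_1-1,\theta_0)$ and $q=(T-L_1+1,\theta_0)$ agree up to sign, after which a localized cutoff interpolation produces the required $h$.

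First I apply Lemma \ref{lemma: exponential decay}(2) separately to $A$ and $B$ on $(0,T)\times S^3$ to obtain temporal-gauge trivializations $u_A,u_B$ of $E$ with $|\nabla^k u_A(A)|,\,|\nabla^k u_B(B)|\lesssim_k e^{2|t-T/2|-T}$ on $(\tfrac13,T-\tfrac13)$. Writing $\tilde A:=u_A(A)$, $\tilde B:=u_B(B)$, and $w_i:=u_A\, g_i\, u_B^{-1}$, the identity $w_i(\tilde B)=\tilde A+u_A(g_i(B)-A)$ rearranges to $dw_i=w_i\tilde B-\tilde A\,w_i-u_A(g_i(B)-A)\,w_i$, so all $\nabla_{\tilde A}$-derivatives of $w_i$ through order $10$ are controlled in $L^2$ by $|\tilde A|,|\tilde B|,|g_i(B)-A|$ and their derivatives. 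I set $c_1:=w_1(p)$ and $c_2:=w_2(q)$ and $v_i:=\log(w_i c_i^{-1})$ where defined; integrating the bound on $\nabla w_i$ over a unit-width strip adjacent to $p$ (resp.\ $q$) and applying Sobolev embedding yields $\|v_i\|_{L^\infty}\lesssim e^{-2L_1}+\varepsilon_2$ on that strip.

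Next I match the two constants. The temporal-gauge condition means $\tilde A,\tilde B$ have no $dt$-component, so parallel transport along the $t$-axis in the trivializations $u_A,u_B$ is the identity; hence $P_A=u_A(q)^{-1}u_A(p)$ and $P_B=u_B(q)^{-1}u_B(p)$. Substituting $g_2'(q)=P_A^{-1}g_2(q)P_B$ converts the hypothesis $\min_\pm\dist_{SU(2)}(g_1(p),\pm g_2'(q))<\varepsilon_2$ directly into $\min_\pm\dist_{SU(2)}(c_1,\pm c_2)<\varepsilon_2$. After possibly replacing $g_2$ by $-g_2$, I take $c_1=c_2=:c$ up to $O(\varepsilon_2)$. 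Choose a smooth cutoff $\varphi_L$ equal to $1$ on $(-\infty,L_1-1]$ and to $0$ on $[L_1,\infty)$, its mirror $\varphi_R(t):=\varphi_L(T-t)$, and define
\[ h \;:=\; \begin{cases} u_A^{-1}\,e^{\varphi_L v_1}\,c\,u_B & \text{on } (-1,L_1),\\ u_A^{-1}\,c\,u_B & \text{on } [L_1,T-L_1],\\ u_A^{-1}\,e^{\varphi_R v_2}\,c\,u_B & \text{on } (T-L_1,T+1); \end{cases} \]
by construction $h=g_1$ on $(-1,0)$ and $h=\pm g_2$ on $(T,T+1)$.

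The weighted bound is verified in three regimes. Where $h=g_1$ (respectively $\pm g_2$), the hypothesis gives $\|h(B)-A\|_{L^2_{10,A}(n,n+1)}\leq\varepsilon_2$, and $\varepsilon_2\leq\delta e^{-L_1}$ ensures $e^{n+1}\varepsilon_2<\delta$. In the bulk $L_1\leq n\leq T-L_1-1$, a direct computation shows the $du_A,du_B$ terms cancel, giving $h(B)-A=u_A^{-1}(c\tilde B c^{-1}-\tilde A)u_A$, whence $\|h(B)-A\|_{L^2_{10,A}(n,n+1)}\lesssim e^{2|n-T/2|-T}$; multiplying by the weight yields $\lesssim e^{1-\min(n+1,T-n)}<\delta$ once $L_1>1-\log\delta$. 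On the two unit-width transition strips the analogous expansion introduces extra terms from $d(\varphi_L v_1)$ and $d(\varphi_R v_2)$ of size $\|v_i\|_{L^2_{10}}\lesssim e^{-2L_1}+\varepsilon_2$, which defeats the weight $e^{L_1}$ under the coupled choice $L_1=L_1(\delta)$, $\varepsilon_2\lesssim\delta e^{-L_1(\delta)}$. The main obstacle is precisely this transition-strip step: the weight $e^{L_1}$ demands quadratic decay $e^{-2L_1}$ of $v_i$, not just linear $e^{-L_1}$, which forces $c_i$ to be evaluated \emph{at the matching points} $p,q$ rather than deep in the bulk, and couples $\varepsilon_2$ to $L_1$. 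Once this coupling is arranged the remaining calculations are routine bump-function manipulations.
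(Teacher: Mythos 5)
Your argument follows the paper's proof essentially step for step: exponentially decaying temporal gauges via Lemma \ref{lemma: exponential decay}, the transfer functions $w_i = u_A g_i u_B^{-1}$ being nearly constant near the matching points, the third hypothesis aligning those constants up to sign and $\varepsilon_2$, and a cutoff interpolation producing $h$. The only caveat is notational: since $c_1\neq c_2$ in general, the single constant $c$ in your bulk formula for $h$ should either be $c_1$ with $v_2$ redefined as $\log(w_2 c_1^{-1})$ (absorbing $\dist_{SU(2)}(c_1,c_2)=O(\varepsilon_2)$), or, as the paper does, replaced by an $SU(2)$-valued interpolating path $v(t)$ from $c_1$ to $c_2$ with $|\nabla^k v|\lesssim_k\varepsilon_2$; one should also carry the $v_i$ estimate in $L^2_{11}$ rather than $L^2_{10}$ so a derivative is available for the cutoffs.
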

\begin{proof}
Let $g_A$ and $g_B$ be the temporal gauges of $A$ and $B$ over $0<t<T$ introduced in Lemma \ref{lemma: exponential decay}.
The connection matrices $A':=g_A(A)$ and $B':=g_B(B)$ satisfy 
$|\nabla^k A'|, |\nabla^k B'|\lesssim_k \exp(2|t-T/2|-T)$.
Set $w_1=g_A\circ g_1\circ g_B^{-1}$ over $0<t<L_1$ and 
$w_2=g_A\circ g_2\circ g_B^{-1}$ over $T-L_1<t<T$.
They satisfy 
$\norm{w_1(B')-A'}_{L^2_{10,A'}(0,L_1)}<\varepsilon_2$ and $\norm{w_2(B')-A'}_{L^2_{10,A'}(T-L_1,T)}<\varepsilon_2$.
Moreover we can assume $\dist_{SU(2)}(w_1(p),w_2(q))<\varepsilon_2$.
Here we regard $w_1$ and $w_2$ as $SU(2)$-valued functions over $0<t<L_1$ and $T-L_1<t<T$ respectively.

We get $|dw_1|\lesssim \varepsilon_2+e^{-2L_1}$ and $|dw_2|\lesssim \varepsilon_2+e^{-2L_1}$ over 
$L_1-2<t<L_1$ and $T-L_1<t<T-L_1+2$ respectively.
Then $w_1$ and $w_2$ are expressed as $w_1=w_1(p)e^{u_1}$ over $L_1-2<t<L_1$ and 
$w_2=w_2(q)e^{u_2}$ over $T-L_1<t<T-L_1+2$ such that 
\[ \norm{u_1}_{L^2_{11}(L_1-2,L_1)} \lesssim \varepsilon_2+e^{-2L_1}, \quad 
   \norm{u_2}_{L^2_{11}(T-L_1,T-L_1+2)}\lesssim \varepsilon_2+e^{-2L_1}.\]

We take a path $v:\mathbb{R}\to SU(2)$ such that $v(t)=w_1(p)$ for $t\leq L_1-1$,
$v(t)=w_2(q)$ for $t\geq L_1$ and $|\nabla^k v|\lesssim_k \varepsilon_2$.
We also take a cut-off $\varphi:\mathbb{R}\to [0,1]$ so that 
$\supp(d\varphi) \subset (L_1-2,L_1-1)\cup (T-L_1+1,T-L_1+2)$,
$\varphi(t)=1$ over $\{t\leq L_1-2\}\cup \{t\geq T-L_1+2\}$ and 
$\varphi=0$ over $L_1-1\leq t\leq T-L_1+1$.
We define a gauge transformation $h$ of $E$ over $-1<t<T+1$ by 
\begin{equation*}
  h = \begin{cases}
       g_A^{-1}\circ (v e^{\varphi u_1})\circ g_B \quad (t\leq T/2),\\
       g_A^{-1}\circ (v e^{\varphi u_2})\circ g_B \quad (t > T/2).
      \end{cases}
\end{equation*}
Then  
$|\nabla_A^k (h(B)-A)|\lesssim_k \exp(2|t-T/2|-T)$ over $L_1<t<T-L_1$, 
$\norm{h(B)-A}_{L^2_{10,A}(0,L_1)}\lesssim \varepsilon_2+e^{-2L_1}$ and 
$\norm{h(B)-A}_{L^2_{10,A}(T-L_1,T)}\lesssim \varepsilon_2+e^{-2L_1}$.
We can choose $L_1$ and $\varepsilon_2$ so that $h$ satisfies the statement.
\end{proof}

Using Lemmas \ref{lemma: gluing gauge transformations non-degenerate} and 
\ref{lemma: gluing gauge transformations degenerate}, we can provide a sufficient condition 
for a given connection $[B]$ to be contained in $U_\delta(A,T)$:

\begin{lemma} \label{lemma: gluing gauge transformations unify}
 For any $\delta>0$ we can choose $\varepsilon_3=\varepsilon_3(\delta)>0$ and 
an integer $R_1=R_1(\delta)>L_1(\delta)$ ($L_1(\delta)$ is the constant introduced in 
  Lemma \ref{lemma: gluing gauge transformations degenerate})
so that the following statement holds.
Take $[A],[B]\in \moduli_d$ and an integer $T>1$. If they satisfy the following two conditions, then 
$[B]\in U_\delta(A,T)$. 
  \begin{itemize}
  \item $G(A)\cap [-T-R_1,T+R_1] = G(B)\cap [-T-R_1,T+R_1]$. 
  Let $n_1<n_2<\dots<n_G$ be the elements of this set, and we set $p_k = (n_k+L_1,\theta_0)$ and 
  $q_k=(n_k-L_1+1,\theta_0)$ for $1\leq k\leq G$. 
  \item For each $1\leq k\leq G$ there exists a gauge transformation $g_k$ of $E$ over $n_k-R_1<t<n_k+R_1$
  satisfying 
  \begin{equation*}
     \begin{split}
    \norm{g_k(B)-A}_{L^2_{10,A}(n_k-R_1,n_k+R_1)}<\varepsilon_3 \quad (1\leq k\leq G),\\
    \min_{\pm} \dist_{SU(2)}(g_k(p_k),\pm g'_{k+1}(q_{k+1}))<\varepsilon_3  \quad (1\leq k<G).
     \end{split}
  \end{equation*}
  Here $g'_{k+1}(q_{k+1})$ is defined by the following commutative
   diagram. 
     \begin{equation*}
       \begin{CD}
         E_{p_k} @>{\text{parallel translation by $B$}}>> E_{q_{k+1}} \\
         @VV{g'_{k+1}(q_{k+1})}V @VV{g_{k+1}(q_{k+1})}V \\
         E_{p_k} @>{\text{parallel translation by $A$}}>> E_{q_{k+1}}
       \end{CD}
     \end{equation*}
  \end{itemize}
\end{lemma}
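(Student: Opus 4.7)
The plan is to glue the local gauge transformations $g_1,\dots,g_G$ into a single gauge transformation $h$ on $(-T,T)$ by iteratively applying Lemma~\ref{lemma: gluing gauge transformations degenerate} on each bad region between consecutive good integers. By hypothesis $\{n_1<\dots<n_G\} = G(A)\cap[-T-R_1,T+R_1]=G(B)\cap[-T-R_1,T+R_1]$, so for each consecutive pair $n_k<n_{k+1}$ the interval $(n_k+1,n_{k+1})$ contains no good integers for either $A$ or $B$ and thus $\norm{F_A}_{L^\infty(n,n+1)},\norm{F_B}_{L^\infty(n,n+1)}<\nu$ there. Choosing $R_1>L_1(\delta)+1$, I would restrict $g_k$ to $(n_k-1,\,n_k+L_1+1)$ and $g_{k+1}$ to $(n_{k+1}-L_1,\,n_{k+1}+1)$ and apply Lemma~\ref{lemma: gluing gauge transformations degenerate} to the bad interval in between; after the translation $t\mapsto t-n_k-1$ the points $p,q$ of Lemma~\ref{lemma: gluing gauge transformations degenerate} become exactly the points $p_k,q_{k+1}$ of Lemma~\ref{lemma: gluing gauge transformations unify}, so the parallel-transport matching hypothesis is met. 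For short gaps $n_{k+1}-n_k-1<2L_1$ where Lemma~\ref{lemma: gluing gauge transformations degenerate}'s condition $T\ge 2L_1$ fails, Lemma~\ref{lemma: gluing gauge transformations non-degenerate} is used instead on any good integer interval contained in the overlap of the domains of $g_k$ and $g_{k+1}$.

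Proceeding inductively from $k=1$ to $G-1$, I obtain a gauge transformation $h$ on $(n_1-1,\,n_G+1)$ (or slightly larger) that agrees with $g_k$ up to a sign near each $n_k$ and satisfies the weighted bound $e^{\min(n-n_k,\,n_{k+1}-n)}\norm{h(B)-A}_{L^2_{10,A}(n,n+1)}<\delta/2$ on each joined bad region. The sign ambiguity $\pm g_{k+1}$ produced at each application of Lemma~\ref{lemma: gluing gauge transformations degenerate} is absorbed by replacing $g_{k+1}$ with $-g_{k+1}$ when needed; the hypotheses of Lemma~\ref{lemma: gluing gauge transformations unify} are invariant under this sign change, so the induction proceeds. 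On each good interval $(n_k,n_k+1)$, $h$ equals $g_k$ (up to a global sign) and the hypothesis $\norm{g_k(B)-A}_{L^2_{10,A}}<\varepsilon_3$ controls the unweighted norm, which suffices because $|n_k-G(A,T)|=0$ there.

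The last step is to extend $h$ across the boundary regions $(-T,\,n_1-1)$ and $(n_G+1,\,T)$ when they are nonempty. By the definition of $n_1,n_G$ these regions contain no good integers for either $A$ or $B$, so by Lemma~\ref{lemma: exponential decay}(2) both connections admit temporal-gauge trivializations with exponentially small connection matrices. A cutoff-interpolation argument analogous to the one in the proof of Lemma~\ref{lemma: gluing gauge transformations degenerate}---splicing the inner gauge $h$ near $n_1$ (resp.\ $n_G$) with the composition of the two temporal-gauge trivializations---yields the extension. Because $\pm T\in G(A,T)$, the weight $e^{|n-G(A,T)|}$ is uniformly bounded on these boundary intervals, so only a uniform (not exponential) estimate is required, and this follows directly from Lemma~\ref{lemma: exponential decay}.

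The main obstacle is the bookkeeping of constants so that the chain of estimates collapses to the single bound $<\delta$ on every integer interval of $(-T,T)$. Concretely, $R_1=R_1(\delta)$ must be chosen larger than $L_1(\delta/2)+1$ and large enough that the overlaps used in the gluings accommodate the setup of Lemma~\ref{lemma: gluing gauge transformations degenerate}, while $\varepsilon_3=\varepsilon_3(\delta)$ must satisfy $\varepsilon_3\le \min(\varepsilon_2(\delta/2),\,\varepsilon_1(\nu,\delta/2),\,\delta/2)$ so that each individual application of Lemma~\ref{lemma: gluing gauge transformations degenerate} or Lemma~\ref{lemma: gluing gauge transformations non-degenerate} yields the bound $\delta/2$. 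The factor of $1/2$ leaves slack to absorb the cutoff-interpolation errors arising at the boundary extensions.
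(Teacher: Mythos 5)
Your overall strategy coincides with the paper's: induct over the good integers $n_1 < \dots < n_G$, gluing consecutive local gauges by Lemma \ref{lemma: gluing gauge transformations degenerate} across long bad gaps and by Lemma \ref{lemma: gluing gauge transformations non-degenerate} across short gaps, then extend across the boundary regions using the temporal gauges from Lemma \ref{lemma: exponential decay}. You also correctly identify the sign ambiguity and the way the points $p_k,q_{k+1}$ match up with the $p,q$ of Lemma \ref{lemma: gluing gauge transformations degenerate} after translation. However, three of your explicit bookkeeping claims are wrong in ways that matter.

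First, in the short-gap case the paper glues at the middle integer $m=\lfloor(n_k+n_{k+1})/2\rfloor$, which is generally not in $G(A)$; the paper therefore first fixes, by a compactness argument, a constant $\kappa>0$ such that $\norm{F_A}_{L^\infty(m,m+1)}\geq\kappa$ whenever $m$ is within $L_1+1$ of a good integer, and then applies Lemma \ref{lemma: gluing gauge transformations non-degenerate} with that $\kappa$. You instead glue at the genuinely good interval $(n_k,n_k+1)$, where $\kappa=\nu$ works directly; this is a legitimate variant, but it requires both $g_k$ and $g_{k+1}$ to be defined there, which forces $R_1>n_{k+1}-n_k$. In the short-gap case that can be as large as $2L_1$, so you need $R_1>2L_1(\delta)$, not merely $R_1>L_1(\delta/2)+1$. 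The paper only needs $R_1>L_1+2$ because it glues in the middle.

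Second, your constraint $\varepsilon_3\le\min(\varepsilon_2(\delta/2),\varepsilon_1(\nu,\delta/2),\delta/2)$ is too weak. After a short-gap gluing, $h_{k+1}$ equals $\pm g_{k+1}$ on the intermediate integer intervals $(n,n+1)$ with $n_k<n<n_{k+1}$; there the unweighted bound is only $\varepsilon_3$, while the weight $e^{|n-G(A,T)|}$ can be as large as $e^{L_1}$. The required inequality $e^{|n-G(A,T)|}\norm{h_{k+1}(B)-A}_{L^2_{10,A}(n,n+1)}<\delta$ therefore needs $\varepsilon_3 e^{L_1}<\delta$, which $\varepsilon_3\le\delta/2$ does not give. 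This is exactly why the paper imposes $\varepsilon_3 e^{R_1}<\delta$ (with $R_1>L_1$), and why it feeds $\delta e^{-L_1-1}$, not $\delta/2$, into Lemma \ref{lemma: gluing gauge transformations non-degenerate} when computing $\varepsilon_1$.

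Third, your claim that the weight $e^{|n-G(A,T)|}$ is ``uniformly bounded on the boundary intervals, so only a uniform (not exponential) estimate is required'' is incorrect. On the boundary region between $-T$ and $n_1$, the closest elements of $G(A,T)$ are $-T$ and $n_1$, so the weight is $e^{\min(n+T,\,n_1-n)}$, which peaks at roughly $e^{(n_1+T)/2}$ and can grow without bound as $T\to\infty$. What saves the estimate is precisely the exponential decay of $F_A$ and $F_B$ from Lemma \ref{lemma: exponential decay}(1): the decay rate $e^{-2|t-c|}$ beats the weight growth $e^{+|t-\cdots|}$ (crucially $2>1>\alpha$), and the remaining factor is of order $e^{-R_1}$, which is why $R_1$ must also be chosen large in terms of $\delta$. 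Your sentence removes the one mechanism that actually makes the boundary estimate work, so as stated that step would fail.

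In short, the structure of your argument is sound and essentially the paper's, but the proof as written would not close: you must replace $R_1>L_1(\delta/2)+1$ by something like $R_1>2L_1(\delta)$ (or glue at the middle interval and introduce $\kappa$ as the paper does), replace $\varepsilon_3\le\delta/2$ by $\varepsilon_3 e^{R_1}<\delta$, and correct the boundary discussion to rely on the exponential decay beating the exponential weight rather than on the weight being bounded.
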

\begin{proof}
First let's consider the case $G(A)\cap [-T-R_1,T+R_1] = G(B)\cap [-T-R_1,T+R_1] =\emptyset$.
By Lemma \ref{lemma: exponential decay} we can choose trivializations $g_A$ and $g_B$ of $E$ over $-T-R_1<t<T+R_1$ such that 
the connection matrices $g_A(A)$ and $g_B(B)$ satisfy 
\[  |\nabla^k g_A(A)|, |\nabla^k g_B(B)| \lesssim_k e^{2(|t|-T-R_1)} \quad (|t|<T+R_1-1).\]
Then $h:=g_A^{-1}\circ g_B$ satisfies (if $R_1\gg 1$)
\[ e^{|n-G(A,T)|} \norm{h(B)-A}_{L^2_{10,A}(n,n+1)}
   \leq e^{|n-\{\pm T\}|}\norm{h(B)-A}_{L^2_{10,A}(n,n+1)} < \delta \]
for all $-T\leq n\leq T-1$.
Hence $[B]\in U_\delta(A,T)$.

Next suppose $G(A)\cap [-T-R_1,T+R_1] \neq \emptyset$.
From the compactness of $\moduli_d$ we can find $\kappa>0$ so that if $[C]\in \moduli_d$ satisfies 
$\norm{F_C}_{L^\infty(0,1)}\geq \nu$ then $\norm{F_C}_{L^\infty(n,n+1)}\geq \kappa$ for all integers
$|n| \leq L_1+1$.
Let $\varepsilon_1 = \varepsilon_1(\kappa, \delta e^{-L_1-1})$ and $\varepsilon_2 = \varepsilon_2(\delta)$ 
be the positive constants introduced in Lemmas \ref{lemma: gluing gauge transformations non-degenerate} and 
\ref{lemma: gluing gauge transformations degenerate}.
We take $\varepsilon_3>0$ and $R_1>0$ so that 
\[ \varepsilon_3 < \min(\varepsilon_1, \varepsilon_2), \quad R_1>L_1+2, \quad \varepsilon_3 e^{R_1} < \delta.\]

We inductively define gauge transformations $h_k$ of $E$ over $n_1-R_1<t<n_k+R_1$ for $k=1,2,\dots,G$ so that 
the following two conditions hold:
\begin{itemize}
  \item $h_k = g_1$ over $n_1-R_1<t<n_1$ and $h_k=\pm g_k$ over $n_k<t<n_k+R_1$.
  \item $e^{|n-G(A,T)|}\norm{h_k(B)-A}_{L^2_{10,A}(n,n+1)} <\delta$ for all integers $n_1\leq n <n_k$.  
\end{itemize}
$h_1:=g_1$ obviously satisfies the conditions. 
Suppose we have constructed $h_k$ $(k< G)$.

\textbf{Case 1}. Suppose $n_{k+1}-n_k-1<2L_1$. Set $m=\lfloor \frac{n_k+n_{k+1}}{2}\rfloor$.
From the definition of $\kappa$ we have $\norm{F_A}_{L^\infty(m,m+1)}\geq \kappa$.
We also have $\norm{h_k(B)-A}_{L^2_{10,A}(m,m+1)}, \norm{g_{k+1}(B)-A}_{L^2_{10,A}(m,m+1)}<\varepsilon_3<\varepsilon_1$.
Then we can glue $h_k$ and $g_{k+1}$ over $m<t<m+1$ by Lemma \ref{lemma: gluing gauge transformations non-degenerate}
and get $h_{k+1}$. This satisfies the required conditions.

\textbf{Case 2}. Suppose $n_{k+1}-n_k-1\geq 2L_1$. Then we can apply Lemma \ref{lemma: gluing gauge transformations degenerate}.
We glue $h_k$ and $g_{k+1}$ over $n_k+1<t<n_{k+1}$ and get $h_{k+1}$.

Therefore we get $h_G$ over $n_1-R_1<t<n_G+R_1$.
If $(-T,T)\subset (n_1-R_1,n_G+R_1)$, then it satisfies 
\begin{equation} \label{eq: required condtion for U_delta}
 e^{|n-G(A,T)|}\norm{h_G(B)-A}_{L^2_{10,A}(n,n+1)} < \delta
\end{equation}
for all integers $-T\leq n <T$.
Hence $[B]\in U_\delta(A,T)$.

So the remaining case is $(-T,T)\not\subset (n_1-R_1,n_G+R_1)$.
Suppose $-T<n_1-R_1$. Then $\norm{F_A}_{L^\infty(-T-R_1,n_1)} < \nu$ and $\norm{F_B}_{L^\infty(-T-R_1,n_1)}<\nu$.
Hence by Lemma \ref{lemma: exponential decay}
there are trivializations $g_A$ and $g_B$ of $E$ over $-T-R_1<t<n_1$ such that 
the connection matrices $g_A(A)$ and $g_B(B)$ satisfy appropriate exponential decay conditions.
We glue $g_A^{-1}\circ g_B$ to $h_G$ as in the proof of Lemma \ref{lemma: gluing gauge transformations degenerate}.
In the case of $T>n_G+R_1$, we proceed in the same way over $n_G+1 <t< T+R_1$.
Then we get a gauge transformation $h$ of $E$ over $-T<t<T$ satisfying  
(\ref{eq: required condtion for U_delta}) for all integers $-T\leq n<T$.
Thus $[B]\in U_\delta(A,T)$
\end{proof}

By using Lemma \ref{lemma: gluing gauge transformations unify}
we prove Proposition \ref{prop: decomposition}. We write the statement again for 
the convenience of readers.
\begin{proposition}[$=$ Proposition \ref{prop: decomposition}]
For any $\delta>0$ and any integer $T>1$ there exist 
$[A_1],\dots,[A_n]\in \moduli_d$ satisfying 
\[ \log n\lesssim_\delta T, \quad \moduli_d = \bigcup_{i=1}^n U_\delta(A_i,T).\] 
\end{proposition}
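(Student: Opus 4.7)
The plan is to reduce Proposition \ref{prop: decomposition} to a counting argument based on Lemma \ref{lemma: gluing gauge transformations unify}. That lemma gives a purely local/combinatorial sufficient condition for $[B]\in U_\delta(A,T)$: it suffices that $A$ and $B$ share the same good-integer pattern in the window $[-T-R_1,T+R_1]$, that near each good integer $n_k$ the two connections are $\varepsilon_3$-close (up to gauge) in $L^2_{10,A}$, and that certain $SU(2)$-matching elements at the bridging points agree up to $\pm 1$ and error $\varepsilon_3$. Given $\delta>0$, I fix $R_1=R_1(\delta)$ and $\varepsilon_3=\varepsilon_3(\delta)$ from that lemma and enumerate finitely many ``types'' of such data. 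For each realizable type $\tau$ I select one global representative $[A_\tau]\in\moduli_d$, so that every $[B]\in\moduli_d$ lies in $U_\delta(A_\tau,T)$ for the type $\tau=\tau(B)$. The proposition then reduces to bounding the number of types by $\exp(O_\delta(T))$.

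The counting has three ingredients. First, the combinatorial skeleton $G(A)\cap[-T-R_1,T+R_1]$ is an arbitrary subset of an interval of length $2T+2R_1+1$, giving at most $2^{2T+2R_1+1}$ choices. Second, the local geometric data near each good integer $n_k$: since $\moduli_d$ is $\mathbb{R}$-invariant and compact in $C^\infty_{loc}$ by Uhlenbeck, the family of restrictions of elements of $\moduli_d$ to $(-R_1,R_1)\times S^3$, taken modulo gauge, is precompact in $C^\infty$, and hence admits a finite $L^2_{10}$-net of size $N_1=N_1(\delta)$; by translation this yields $N_1$ local choices at each $n_k$, contributing $N_1^{|G|}$ factors with $|G|\le 2T+2R_1+1$. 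Third, the $SU(2)$-matching data between consecutive good integers: since $SU(2)$ is compact, a finite $\varepsilon_3$-net of size $N_2=N_2(\delta)$ suffices, contributing at most $N_2^{|G|-1}$. The total is bounded by $2^{2T+2R_1+1}(N_1N_2)^{2T+2R_1+1}$, whose logarithm is $\lesssim_\delta T$ since $R_1,N_1,N_2$ depend only on $\delta$.

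Finally I need to produce genuine elements $[A_\tau]\in\moduli_d$ rather than just local data: for each type $\tau$ that is realized by at least one $[B]\in\moduli_d$, pick any such $[B]$ as the representative $[A_\tau]$. The bound on $n$ is unaffected because unrealized types contribute nothing. Then for arbitrary $[B]\in\moduli_d$, the type $\tau(B)$ is realized (by $B$ itself), and one checks that $B$ satisfies the hypotheses of Lemma \ref{lemma: gluing gauge transformations unify} with $A=A_\tau$ by chaining the individual $L^2_{10}$ and $SU(2)$ comparisons (each within $\varepsilon_3/2$, say, to absorb the two-sided triangle inequality, for which one chooses the nets at scale $\varepsilon_3/2$ from the start).

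The main technical obstacle is that the norm $L^2_{10,A}$ appearing in the hypothesis of Lemma \ref{lemma: gluing gauge transformations unify} depends on the base connection, while the local nets are constructed intrinsically. I handle this by noting that $C^\infty_{loc}$-closeness of two connections implies uniform equivalence of their associated $L^2_{10}$ Sobolev norms on any fixed compact set, so if the local representatives in the net are chosen within $C^\infty$-distance $O(\varepsilon_3)$ of one another, the $L^2_{10,A_\tau}$-closeness needed in Lemma \ref{lemma: gluing gauge transformations unify} follows from the intrinsic $L^2_{10}$-closeness used in the counting. A small additional point is handling the fact that non-flat instantons have gauge stabilizer $\{\pm 1\}$, so the matching $SU(2)$ elements are only defined modulo sign; this is already built into the statement of Lemma \ref{lemma: gluing gauge transformations unify} via the $\min_{\pm}$, so choosing $SU(2)$-nets is unproblematic.
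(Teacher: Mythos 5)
Your proof is correct and takes essentially the same route as the paper's: decompose $\moduli_d$ by the good-integer skeleton $\Omega = G(\cdot)\cap[-T-R_1,T+R_1]$, then by local data near each good integer (a finite $\varepsilon$-cover/net coming from $C^\infty_{loc}$-compactness of $\moduli_d$, translated to $(n_k-R_1,n_k+R_1)$), then by the $SU(2)$ gluing parameters, and finally apply Lemma \ref{lemma: gluing gauge transformations unify} to one representative per piece via a triangle inequality. The paper phrases the middle step with a fixed reference connection $A_0$ and the join $\bigvee_k(-n_k)\cdot\alpha$ of translated covers rather than independent nets, but the counting and the verification of the hypotheses of Lemma \ref{lemma: gluing gauge transformations unify} are the same.
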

\begin{proof}
Let $\varepsilon_3=\varepsilon_3(\delta)$ and $R_1=R_1(\delta)$ be the positive constants introduced in 
Lemma \ref{lemma: gluing gauge transformations unify}.
Let $\varepsilon=\varepsilon(\delta)<\varepsilon_3$ be a small positive number which will be fixed later.
For each subset $\Omega\subset \mathbb{Z}\cap [-T-R_1,T+R_1]$ we define 
\[ \moduli_d^{\Omega} = \{[A]\in \moduli_d|\, G(A)\cap [-T-R_1,T+R_1] =\Omega\}.\]
$\moduli_d$ is decomposed into these $\moduli_d^\Omega$, and the number of the choices of $\Omega\subset
\mathbb{Z}\cap [-T-R_1,T+R_1]$ is equal to $2^{2(T+R_1)+1}\lesssim_\delta 4^T$.

We choose an open cover $\alpha$ of $\moduli_d$ such that if $[A],[B]\in \moduli_d$ is contained in the same 
open set $U\in \alpha$ then there exists a gauge transformation $g$ of $E$ over $-R_1<t<R_1$ satisfying 
\[ \norm{g(B)-A}_{L^2_{10,A}(-R_1,R_1)}<\varepsilon.\]
Note that the choice of $\alpha$ depends on $\delta$ and $\varepsilon$.

Take $\Omega = \{n_1<n_2<\dots<n_G\} \subset \mathbb{Z}\cap [-T-R_1,T+R_1]$.
We define an open covering $\mathcal{U}$ of $\moduli_d$ by 
\[ \mathcal{U} = \bigvee_{k=1}^G (-n_k)\cdot \alpha.\]
Here $(-n_k)\cdot \alpha$ is the translation of $\alpha$ by $(-n_k)$, and $\mathcal{U}$ is the set of 
open subsets $U_1\cap\dots \cap U_G$ $(U_k\in (-n_k)\cdot \alpha)$.
The cardinality of $\mathcal{U}$ is bounded by $|\alpha|^{G} \leq |\alpha|^{2T+2R_1+1}$.

We choose $V\in \mathcal{U}$ and consider $\moduli_d^\Omega \cap V$.
Let $\mathcal{A}$ be the set of connections $A$ on $E$ satisfying $[A]\in \moduli_d^\Omega\cap V$.
Take and fix one $A_0\in \mathcal{A}$.
For every $A\in \mathcal{A}$ and $1\leq k\leq G$ there exists a gauge transformation $g_{A,k}$ over 
$n_k-R_1<t<n_k+R_1$ satisfying 
\[ \norm{g_{A,k}(A_0)-A}_{L^2_{10,A}(n_k-R_1,n_k+R_1)} < \varepsilon.\]
Let $L_1=L_1(\delta)>0$ be the positive constant introduced in Lemma \ref{lemma: gluing gauge transformations degenerate},
and set $p_k=(n_k+L_1,\theta_0)$ and $q_k = (n_k-L_1+1,\theta_0)$ for $1\leq k\leq G$.
We consider the map:
\[ \mathcal{A}\to SU(2)^{G-1}, \quad A\mapsto (g_{A,k}(p_k)^{-1}g_{A,k+1}'(q_{k+1}))_{k=1}^{G-1}.\]
Here $g_{A,k+1}'(q_{k+1})$ is defined by the commutative diagram:
\begin{equation*}
       \begin{CD}
         E_{p_k} @>{\text{parallel translation by $A_0$}}>> E_{q_{k+1}} \\
         @VV{g'_{A,k+1}(q_{k+1})}V @VV{g_{A,k+1}(q_{k+1})}V \\
         E_{p_k} @>{\text{parallel translation by $A$}}>> E_{q_{k+1}}
       \end{CD}
     \end{equation*}
Considering a covering of $SU(2)$ by $\varepsilon$-balls, we can construct a decomposition 
$\mathcal{A}=\mathcal{A}_1\cup \dots\cup \mathcal{A}_N$ such that 
\begin{itemize}
  \item $\log N\lesssim_\varepsilon G\lesssim_\delta T$.
  \item If $A,B\in \mathcal{A}$ is contained in the same $\mathcal{A}_i$ then 
  \[ \dist_{SU(2)}(g_{A,k}(p_k)^{-1}g'_{A,k+1}(q_{k+1}),\, g_{B,k}(p_k)^{-1}g'_{B,k+1}(q_{k+1})) < \varepsilon 
    \quad (\forall 1\leq k\leq G).\]
\end{itemize}

\begin{claim}
For any $1\leq i\leq G$ and $A,B\in \mathcal{A}_i$ we get $[B]\in U_\delta(A,T)$.
\end{claim}
\begin{proof}
We check the conditions of Lemma \ref{lemma: gluing gauge transformations unify}.
The condition $G(A)\cap [-T-R_1,T+R_1]=G(B)\cap [-T-R_1,T+R_1]$ is satisfied.
For each $1\leq k\leq G$ we set 
$g_k = g_{A,k}\circ g_{B,k}^{-1}$ over $n_k-R_1<t<n_k+R_1$. They satisfy 
\[ \dist_{SU(2)}(g_k(p_k), g'_{k+1}(q_{k+1}))<\varepsilon <\varepsilon_3 \quad (\forall 1\leq k\leq G-1).\]
We have 
\[ g_k(B)-A = g_k(B-g_{B,k}(A_0)) + g_{A,k}(A_0)-A.\]
Hence we can choose $\varepsilon = \varepsilon(\delta)>0$ so small that 
\[ \norm{g_k(B)-A}_{L^2_{10,A}(n_k-R_1,n_k+R_1)} < \varepsilon_3.\]
Then we can apply Lemma \ref{lemma: gluing gauge transformations unify} to $A$ and $B$, and we get 
$[B]\in U_\delta(A,T)$.
\end{proof}

Pick up $A_1\in \mathcal{A}_1, \dots, A_N\in \mathcal{A}_N$. Then by the above claim 
\[ \moduli_d^\Omega\cap V \subset U_\delta(A_1,T)\cup \dots \cup U_\delta(A_N,T).\]
We have the following bounds on several parameters:
$\log N\lesssim_\delta T$.
The number of the choices of $V\in \mathcal{U}$ is $\lesssim_\delta |\alpha|^{2T}$.
Note that $|\alpha|$ is now a constant depending only on $\delta$.
The number of the choices of $\Omega\subset \mathbb{Z}\cap [-T-R_1,T+R_1]$ is 
$\lesssim_\delta 4^T$.
Combining these estimates, we get the conclusion.
\end{proof}

\section{Instanton approximation: Proof of Proposition \ref{prop: instanton approximation}}
\label{section: instanton approximation}

We develop instanton approximation technique and
prove Proposition \ref{prop: instanton approximation} in this section.
First we prepare some facts concerning a Green kernel function.
Let $\Delta = \nabla^*\nabla$ be the Laplacian on functions in $X$.
Our sign convention of $\Delta$ is geometric (
$\Delta=-\partial^2/\partial x_1^2 -\partial^2/\partial x_2^2-\partial^2/\partial x_3^2-\partial^2/\partial x_4^2$
over $\mathbb{R}^4$).
Let $g(x,y)$ be the Green kernel of $\Delta +2$ over $X$.
This satisfies 
\[ (\Delta_y + 2)g(x,y) = \delta_x(y) \]
in the distributional sense, i.e. for any compactly supported smooth function $\varphi$ over $X$
\[ \varphi(x) = \int_X g(x,y)(\Delta_y+2)\varphi(y)d\vol(y).\]
$g(x,y)$ is positive everywhere.
It is smooth outside the diagonal, and its singularity along the diagonal is $\dist(x,y)^{-2}$:
\[ \dist(x,y)^{-2} \lesssim g(x,y) \lesssim \dist(x,y)^{-2} \quad (\dist(x,y)\leq 1).\]
It decays exponentially in a long range:
\begin{equation} \label{eq: exponential decay of Green kernel}
 g(x,y) \lesssim e^{-\sqrt{2}\, \dist(x,y)}  \quad (\dist(x,y)>1).
\end{equation}
A detailed construction of $g(x,y)$ is explained in \cite[Appendix]{Matsuo--Tsukamoto}.

For $u\in \Omega^i(\ad E)$ we define its \textbf{Taubes norm} $\tnorm{u}$ by 
\[ \tnorm{u} = \sup_{x\in X} \int_X g(x,y)|u(y)|d\vol(y).\]
This was introduced by Taubes \cite{Taubes path} and Donaldson \cite{Donaldson approximation}. 
An importance of this norm is linked to the following Weitzenb\"{o}ck formula.
Let $A$ be a connection on $E$.
For $\phi\in \Omega^+(\ad E)$ we have (\cite[Chapter 6]{Freed--Uhlenbeck}):
\[ d_A^+d_A^*\phi = \frac{1}{2}\nabla_A^*\nabla_A \phi + \left(\frac{S}{6}- W^+\right)\phi + F_A^+\cdot\phi ,\]
where $S$ is the scalar curvature of $X$ and $W^+$ is the self-dual part of the Weyl curvature.
Since $X = \mathbb{R}\times S^3$ is conformally flat, we have $W^+=0$.
The scalar curvature $S$ is constantly equal to $6$.
So we get 
\begin{equation} \label{eq: Weitzenbock formula}
 d_A^+d_A^*\phi = \frac{1}{2}(\nabla_A^*\nabla_A + 2) \phi + F_A^+\cdot\phi.
\end{equation}
For any smooth $\eta \in \Omega^+(\ad E)$ with $\norm{\eta}_{L^\infty(X)}<\infty$ there uniquely exists 
smooth $\phi\in \Omega^+(\ad E)$ satisfying 
\[ (\nabla^*_A\nabla_A+2)\phi=\eta, \quad \norm{\phi}_{L^\infty(X)}<\infty.\]
We sometimes denote $\phi$ by $(\nabla_A^*\nabla_A+2)^{-1}\eta$.
It satisfies 
\begin{equation} \label{eq: Green kernel estimate}
 |\phi(x)| \leq \int_X g(x,y)|\eta(y)|d\vol(y), \quad 
    \norm{\phi}_{L^\infty(X)} \leq \tnorm{\eta}.
\end{equation}
Moreover it satisfies the following. (Indeed this is the most spectacular property of the Taubes norm).
\begin{equation} \label{eq: quadratic Taubes estimate}
  \tnorm{(d_A^*\phi\wedge d_A^*\phi)^+} \leq 10 \tnorm{\eta}^2.
\end{equation}
For the detailed proofs of the above estimates, see \cite[Section 4, Appendix]{Matsuo--Tsukamoto}.

We define $\mathcal{A}$ as the set of connections $A$ on $E$ such that
\[ \text{$F_A^+$ is compactly supported}, \quad \tnorm{F_A^+}\leq \frac{1}{1000},\quad  
   \norm{F_A}_{C^5_A}:= \max_{0\leq k\leq 5}\norm{\nabla_A^k F_A}_{L^\infty(X)} < \infty.\]
Here $1/1000$ has no special meaning. Any sufficiently small number will do. 
The last condition is connected to the following fact:
Take any point $p\in X$. Let $g$ be the exponential gauge of radius $\pi/2$ around $p$. (The injectivity 
radius of $X$ is equal to $\pi$.)
Then the connection matrix $g(A)$ satisfies 
\[ |\nabla^k g(A)| \lesssim \norm{F_A}_{C^k_A}.\]
We summarize the results of \cite[Sections 4 and 5]{Matsuo--Tsukamoto} in the following proposition.

\begin{proposition} \label{prop: instanton approximation preliminary}
We can construct a gauge equivariant map 
\[ \mathcal{A}\ni A\mapsto \phi_A\in \Omega^+(\ad E)\]
satisfying the following conditions.

\noindent 
(1) $A+d_A^*\phi_A$ is an ASD connection.

\noindent 
(2) $\phi_A$ is smooth and 
\[ |\phi_A(x)|\lesssim \int_X g(x,y)|F_A^+(y)|d\vol(y), \quad \norm{\phi_A}_{L^\infty(X)}\lesssim \tnorm{F_A^+},
   \quad \norm{\nabla_A\phi_A}_{L^\infty(X)} <\infty.\]

\noindent 
(3) If $F_A$ is compactly supported, then 
\[ \int_X |F(A+d_A^* \phi_A)|^2d\vol = \int_X \mathrm{tr}(F_A^2).\]

\noindent 
(4) For any $A,B\in \mathcal{A}$, $\norm{\phi_A-\phi_B}_{L^\infty(X)} \lesssim \norm{A-B}_{C^1_A}$.
\end{proposition}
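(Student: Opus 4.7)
The plan is Taubes' nonlinear perturbation construction, transposed to the cylinder via the Taubes norm. Set $A' := A + d_A^*\phi$ for an unknown $\phi \in \Omega^+(\ad E)$; from $F(A') = F_A + d_A d_A^*\phi + d_A^*\phi \wedge d_A^*\phi$ and the Weitzenb\"ock formula (\ref{eq: Weitzenbock formula}), the self-dual part $F(A')^+$ vanishes if and only if
\[
 (\nabla_A^*\nabla_A + 2)\phi \;=\; -2F_A^+ \,-\, 2F_A^+\cdot\phi \,-\, 2(d_A^*\phi \wedge d_A^*\phi)^+.
\]
Using the bounded inverse $(\nabla_A^*\nabla_A+2)^{-1}$ supplied by (\ref{eq: Green kernel estimate}), I would rewrite this as a fixed-point equation $\phi = \mathcal{T}_A(\phi)$ and define $\phi_A$ as its solution.

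The core step is to solve $\phi = \mathcal{T}_A(\phi)$ by Picard iteration in the complete set $B := \{\phi : \tnorm{\phi} \le 100\,\tnorm{F_A^+}\}$ under the Taubes norm. The Green-kernel bound (\ref{eq: Green kernel estimate}) gives $\tnorm{\mathcal{T}_A(0)} \lesssim \tnorm{F_A^+}$; the linear term contributes $\tnorm{F_A^+\cdot\phi} \le \tnorm{F_A^+}\norm{\phi}_{L^\infty} \lesssim \tnorm{F_A^+}^2$; and the quadratic term is controlled directly by the key estimate (\ref{eq: quadratic Taubes estimate}) as $\lesssim \tnorm{F_A^+}^2$. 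Hence, under the smallness $\tnorm{F_A^+} \le 1/1000$ built into $\mathcal{A}$, the map $\mathcal{T}_A$ preserves $B$ and contracts on it, yielding a unique $\phi_A \in B$. Property (1) holds by construction; passing (\ref{eq: Green kernel estimate}) to the limit yields the pointwise and $L^\infty$ bounds in (2); and smoothness together with $\norm{\nabla_A\phi_A}_{L^\infty} < \infty$ follows by standard elliptic bootstrap from the equation, using the $C^5_A$ control of $F_A$ built into $\mathcal{A}$. Gauge equivariance is automatic, since $(\nabla_A^*\nabla_A+2)^{-1}$, $d_A^*$ and the projection $(\cdot)^+$ are all canonical.

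For (3), I would invoke Chern--Weil: one has the pointwise identity $\mathrm{tr}(F(A+a)^2) - \mathrm{tr}(F_A^2) = d\,\mathrm{tr}\bigl(a\wedge(2F_A + d_A a + \tfrac{2}{3}a\wedge a)\bigr)$; when $F_A$ has compact support and $a = d_A^*\phi_A$ decays rapidly (as follows from the Green-kernel bound for $\phi_A$, since $F_A^+$ is also compactly supported), Stokes' theorem gives the desired integral identity on $X$. For (4), each of $(\nabla_A^*\nabla_A+2)^{-1}$, $d_A^*$, and the action $F_A^+\cdot$ depends Lipschitz-continuously on $A$ in the $C^1_A$ norm, so differencing the two fixed-point equations for $\phi_A$ and $\phi_B$ and absorbing the contraction factor gives $\norm{\phi_A-\phi_B}_{L^\infty} \le \tnorm{\phi_A-\phi_B} \lesssim \norm{A-B}_{C^1_A}$.

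The genuinely hard input is the quadratic Taubes estimate (\ref{eq: quadratic Taubes estimate}); it is what allows the nonlinear fixed-point problem to be closed in the uniform Taubes norm rather than in some $L^p$-Sobolev norm whose constants would blow up on the non-compact cylinder. Everything else is standard fixed-point theory, Chern--Weil and elliptic regularity adapted to the conformally flat, positive-scalar-curvature geometry of $X=\mathbb{R}\times S^3$ that already makes (\ref{eq: Weitzenbock formula}) take its clean form.
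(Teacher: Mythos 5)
Your overall architecture is the right one and matches the paper's: rewrite $F^+(A+d_A^*\phi)=0$ via the Weitzenb\"ock formula as a semilinear equation, close a Picard iteration in the Taubes norm, then pass to a smooth limit and deduce (2)--(4) by the Green kernel estimate, Chern--Weil/Stokes, and differencing the fixed-point equations. But the contraction is set up on the wrong variable, and this is not a cosmetic point. You put the ball $B=\{\phi:\tnorm{\phi}\le 100\tnorm{F_A^+}\}$ on $\phi$ itself. The estimates you then invoke do not follow from $\tnorm{\phi}\le 100\tnorm{F_A^+}$: the Green kernel estimate (\ref{eq: Green kernel estimate}) gives $\norm{\phi}_{L^\infty}\le\tnorm{\eta}$ with $\eta=(\nabla_A^*\nabla_A+2)\phi$, \emph{not} $\norm{\phi}_{L^\infty}\lesssim\tnorm{\phi}$ (the Taubes norm is an averaged $L^1$-type quantity against a kernel with a $\dist(x,y)^{-2}$ singularity and does not control pointwise values), and the quadratic estimate (\ref{eq: quadratic Taubes estimate}) reads $\tnorm{(d_A^*\phi\wedge d_A^*\phi)^+}\le 10\tnorm{\eta}^2$, again in terms of $\eta$, because the Taubes norm of $\phi$ does not control a derivative $d_A^*\phi$. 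So neither the linear-term bound $\tnorm{F_A^+\cdot\phi}\lesssim\tnorm{F_A^+}^2$ nor the quadratic-term bound $\lesssim\tnorm{F_A^+}^2$ is available inside your ball $B$, and the claimed invariance and contraction do not close.

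The fix is exactly what the paper does: set up the fixed-point map on the source $\eta$, not on $\phi$. Define $\phi=(\nabla_A^*\nabla_A+2)^{-1}\eta$ and iterate $\eta\mapsto\Phi(\eta)=-2F_A^+-2F_A^+\cdot\phi-2(d_A^*\phi\wedge d_A^*\phi)^+$ on the set $\{\eta\in\Omega^+(\ad E)_0:\tnorm{\eta}\le 3/1000\}$. Then $\norm{\phi}_{L^\infty}\le\tnorm{\eta}$ and $\tnorm{(d_A^*\phi\wedge d_A^*\phi)^+}\le 10\tnorm{\eta}^2$ apply directly, $\tnorm{\Phi(0)}=2\tnorm{F_A^+}\le 2/1000$ gets the iteration started, and the smallness $\tnorm{F_A^+}\le 1/1000$ from the definition of $\mathcal{A}$ closes both the invariance and the contraction. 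After that your remaining steps --- $\phi_n=(\nabla_A^*\nabla_A+2)^{-1}\eta_n$ converging in $L^\infty$ and in $C^\infty_{\mathrm{loc}}$ by elliptic bootstrap, Chern--Weil with Stokes for (3) using the exponential decay of $g(x,y)$ and compact support of $F_A^+$, and differencing the two contraction fixed-point equations for (4) --- are sound and in line with the paper's argument.
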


\begin{proof}
We roughly explain the construction of $\phi_A$ for the convenience of readers. 
Let $\Omega^+(\ad E)_0$ be the set of smooth $\eta\in \Omega^+(\ad E)$ satisfying 
$\lim_{x\to \pm\infty} |\eta(x)|=0$.
Take $\eta\in \Omega^+(\ad E)_0$ and set $\phi=(\nabla_A^*\nabla_A+2)^{-1}\eta \in \Omega^+(\ad E)_0$.
We want to solve the equation $F^+(A+d_A^*\phi)=0$. This is equivalent to 
\[ \eta = -2F_A-2F_A^+\cdot \phi -2(d_A^*\phi\wedge d_A^*\phi)^+.\]
We denote the right-hand-side by $\Phi(\eta)$. 
By using the estimates (\ref{eq: Green kernel estimate}) and (\ref{eq: quadratic Taubes estimate}), we can prove that 
$\Phi$ becomes a contraction map with respect to the Taubes norm over 
\[ \left\{\eta\in \Omega^+(\ad E)_0|\, \tnorm{\eta} \leq \frac{3}{1000}\right\}.\]
Therefore the sequence $\eta_n$ defined by 
\[ \eta_0=0, \quad \eta_{n+1}= \Phi(\eta_n) \]
is a Cauchy sequence with respect to the Taubes norm.
Then $\phi_n := (\nabla_A^*\nabla_A+2)^{-1}\eta_n$ is a convergent sequence in $L^\infty(X)$.
Let $\phi_A$ be the limit of $\phi_n$.
We can prove that $\phi_A$ is smooth and $\phi_n$ converges to $\phi_A$ in $C^\infty$ over every compact subset of $X$.
Then it satisfies $F^+(A+d_A^* \phi_A)=0$.
The conditions (2), (3) and (4) can be checked by a detailed investigation of the above construction.
\end{proof}

We need some more detailed estimates on $\phi_A$. They are established in the next two lemmas.

\begin{lemma} \label{lemma: gradient estimate}
We can choose $0<\tau<1/1000$ so that the following statement holds.
If $A\in \mathcal{A}$ satisfies $\tnorm{F^+_A}\leq \tau$ then $\phi_A$ satisfies
\[ \norm{\nabla_A\phi_A}_{L^\infty(X)} \leq 1+\norm{F_A}_{C^1_A}.\]
\end{lemma}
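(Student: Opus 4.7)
The starting point is the PDE for $\phi_A$. Substituting the Weitzenb\"{o}ck formula (\ref{eq: Weitzenbock formula}) into the ASD condition $F^+(A+d_A^*\phi_A)=0$ yields
\[
(\nabla_A^*\nabla_A+2)\phi_A=-2F_A^+-2F_A^+\cdot\phi_A-2(d_A^*\phi_A\wedge d_A^*\phi_A)^+=:\eta_A.
\]
From Proposition \ref{prop: instanton approximation preliminary}(2) we already know $\norm{\phi_A}_{L^\infty}\lesssim\tnorm{F_A^+}\leq\tau$, and that $\norm{\nabla_A\phi_A}_{L^\infty}$ is \emph{a priori} finite. The task is to upgrade this qualitative gradient bound to a quantitative one.

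The approach I would take is a local elliptic estimate followed by a quadratic bootstrap. At any point $p\in X$ one works in the exponential gauge of radius $\pi/2$ around $p$ (which exists since the injectivity radius of $X$ is $\pi$); the connection matrix $g(A)$ vanishes at $p$, so $|\nabla_A\phi_A|(p)$ equals the flat gradient of $\phi_A^{\mathrm{gauge}}$ at $p$, and the Sobolev constants of $(\nabla_A^*\nabla_A+2)$ on a small enough ball are controlled purely in terms of $\norm{F_A}_{C^0_A}$ (via $|\nabla^k g(A)|\lesssim\norm{F_A}_{C^k_A}$ as recalled just before Proposition \ref{prop: instanton approximation preliminary}). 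Interior Schauder estimates applied to $(\Delta+2)\phi_A^{\mathrm{gauge}}=\eta_A-(\text{terms linear in }g(A),\,dg(A))$ then give
\[
\norm{\nabla_A\phi_A}_{L^\infty(X)}\leq C_0\bigl(\norm{\phi_A}_{L^\infty}+\norm{\eta_A}_{L^\infty}\bigr),
\]
with $C_0$ an absolute constant (once $\norm{F_A}_{C^0_A}$ is assumed bounded, which we may do by the definition of $\mathcal{A}$). A pointwise bound on $\eta_A$ yields $\norm{\eta_A}_{L^\infty}\leq 2\norm{F_A}_{C^0_A}(1+\norm{\phi_A}_{L^\infty})+C_1\norm{\nabla_A\phi_A}_{L^\infty}^2$, and combined with $\norm{\phi_A}_{L^\infty}\leq 1/2$ (ensured by taking $\tau$ small), setting $x:=\norm{\nabla_A\phi_A}_{L^\infty}$, one arrives at the quadratic inequality
\[
x\leq C_2\norm{F_A}_{C^1_A}+C_3 x^2.
\]

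The main obstacle is that this inequality admits both a small and a large root, and \emph{a priori} $x$ could sit on either branch. I would resolve this by exploiting the iterative construction of $\phi_A$ recalled in the proof of Proposition \ref{prop: instanton approximation preliminary}: the iteration begins at $\eta_0=0$ (so the zeroth approximating gradient vanishes, safely on the small branch), and at each step the map $\Phi$ is contractive with constant controlled by $\tau$, so every iterate $\phi_n$ remains on the small branch provided $\tau$ is sufficiently small. Passing to the limit places $x$ on the small branch, yielding $x\leq 2C_2\norm{F_A}_{C^1_A}$; a final shrinkage of $\tau$ absorbs the multiplicative constant into the ``$+1$'' of the target estimate $\norm{\nabla_A\phi_A}_{L^\infty}\leq 1+\norm{F_A}_{C^1_A}$.
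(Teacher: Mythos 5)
Your proposal has a genuine gap at the very first inequality. You assert that the interior Schauder/$W^{2,p}$ estimate
\[
\norm{\nabla_A\phi_A}_{L^\infty(X)}\leq C_0\bigl(\norm{\phi_A}_{L^\infty}+\norm{\eta_A}_{L^\infty}\bigr)
\]
holds with $C_0$ an \emph{absolute} constant, ``once $\norm{F_A}_{C^0_A}$ is assumed bounded, which we may do by the definition of $\mathcal{A}$.'' But the definition of $\mathcal{A}$ only requires $\norm{F_A}_{C^5_A}<\infty$; it does not impose any uniform bound. In the exponential gauge, the second-order operator you write as $(\Delta+2)$ plus lower-order terms has first- and zeroth-order coefficients of size $\lesssim\norm{F_A}_{C^1_A}$, so the constant in any interior elliptic estimate necessarily depends (at least polynomially) on $\norm{F_A}_{C^1_A}$. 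Equivalently, the ``terms linear in $g(A),dg(A)$'' you move to the right-hand side contribute $\norm{F_A}_{C^0_A}\cdot\norm{\nabla_A\phi_A}_{L^\infty}$, which cannot be absorbed into the left when $\norm{F_A}_{C^0_A}$ is large. Once $C_0$, $C_2$, $C_3$ are allowed to depend on $\norm{F_A}_{C^1_A}$, the small root of your quadratic inequality is no longer $\lesssim\norm{F_A}_{C^1_A}$, so the argument does not produce the linear bound $1+\norm{F_A}_{C^1_A}$ that the lemma claims.

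This is exactly the difficulty the paper's proof is designed around. The paper argues by contradiction: if the lemma fails along a sequence $A_n$ with $\tnorm{F^+(A_n)}\leq 1/n$ and $R_n:=\norm{\nabla_{A_n}\phi_{A_n}}_{L^\infty}>1+\norm{F(A_n)}_{C^1_{A_n}}$, one takes a near-maximal point $p_n$, passes to the exponential gauge there (so $|A_n|+|\nabla A_n|\lesssim\norm{F(A_n)}_{C^1_{A_n}}<R_n$), and then \emph{rescales} by $R_n$, setting $\phi_n(y)=\phi_{A_n}(y/R_n)$. After this rescaling the PDE has uniformly bounded data and coefficients --- the factor $R_n$ that would otherwise spoil the elliptic constant is absorbed by the dilation --- and the estimates become genuinely universal. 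Combined with $\norm{\phi_n}_{L^\infty}\lesssim 1/n\to 0$, this forces $\phi_n\to 0$ in $C^1$ near the origin, contradicting $|\nabla\phi_n(0)|>1/2$. The rescaling by the very quantity one is trying to bound is the idea your proof is missing, and without it (or some substitute) the Schauder constant issue is fatal. A secondary concern: your resolution of the two-root ambiguity via the Picard iteration is not immediate either, since the contraction is in the Taubes norm and you have no a priori control that each iterate's $C^1$ norm stays on the small branch; but this is moot given the primary gap.
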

\begin{proof}
Suppose the statement is false.
Then for any $n>0$ there exists $A_n\in \mathcal{A}$ such that $\tnorm{F^+(A_n)}\leq 1/n$ and 
\[ R_n := \norm{\nabla_{A_n}\phi_{A_n}}_{L^\infty(X)} > 1+ \norm{F(A_n)}_{C^1_{A_n}}.\]
Take $p_n\in X$ satisfying $|\nabla_{A_n}\phi_{A_n}(p_n)|>R_n/2$.
We consider the geodesic coordinate and the exponential gauge (w.r.t. $A_n$) of radius $\pi/2$ around $p_n$.
Then the connection matrix of $A_n$ in this gauge (also denoted by $A_n$) satisfies 
\[ |A_n|+|\nabla A_n|\lesssim \norm{F(A_n)}_{C^1_{A_n}} < R_n.\]
We have the ASD equation 
\[ (\nabla_{A_n}^*\nabla_{A_n}+2)\phi_{A_n} 
   = -2F^+(A_n) - 2F^+(A_n)\cdot \phi_{A_n} - 2(d_{A_n}^*\phi_{A_n}\wedge d_{A_n}^*\phi_{A_n})^+\]
and the estimates $\norm{\phi_{A_n}}_{L^\infty}\lesssim \tnorm{F^+(A_n)} \leq 1/n$ and $\norm{F^+(A_n)}_{L^\infty}<R_n$.
Then
\[ \left|\sum_{i,j}g^{ij}(x)\partial_i \partial_j \phi_{A_n}\right|\lesssim R_n^2 \quad (|x|\leq \pi/2).\]
Here $x$ is the geodesic coordinate around $p_n$.
Set $\phi_n(y)=\phi_{A_n}(y/R_n)$ for $|y|\leq \pi/2$.
This satisfies 
\[ |\nabla \phi_n(0)|>1/2, \quad \left|\sum_{i,j}g^{ij}(y/R_n)\partial_i \partial_j \phi_n\right| \lesssim 1.\]
From the latter condition and $\norm{\phi_n}_{L^\infty}\lesssim 1/n$, 
$\phi_n$ converges to $0$ in $C^1$ over $|y|\leq \pi/3$.
But this contradicts $|\nabla \phi_n(0)|>1/2$.
\end{proof}

For $T>1$ and $K>0$ we define $\mathcal{A}(T,K)\subset \mathcal{A}$ as the set of connections $A$ on $E$ satisfying 
\[ \tnorm{F_A^+} \leq \tau, \quad \supp(F_A^+)\subset \{(t,\theta)\in \mathbb{R}\times S^3|\, T-1<|t|<T\}, \quad 
   \norm{F_A}_{C^5_A} \leq K.\]
Here $\tau$ is the positive constant introduced in Lemma \ref{lemma: gradient estimate}.
For $x=(t,\theta)\in \mathbb{R}\times S^3$ we set
\begin{equation*}
   \begin{split}
    &g_T(x) = g_T(t) = e^{-\sqrt{2}|t-T|} + e^{-\sqrt{2}|t+T|}, \\
    &\hat{g}_T(x)=\hat{g}_T(t) = (1+|t-T|)e^{-\sqrt{2}|t-T|}+(1+|t+T|)e^{-\sqrt{2}|t+T|}.
   \end{split}
\end{equation*}
From the exponential decay estimate (\ref{eq: exponential decay of Green kernel}), the Green kernel $g(x,y)$ satisfies 
\[ \int_{T-1<|t|<T} g(x,y)d\vol(y) \lesssim g_T(x), \quad 
   \int_X g(x,y)g_T(y)d\vol(y) \lesssim \hat{g}_T(x).\]

\begin{lemma} \label{lemma: continuity of instanton approximation}
(1) For any $A\in \mathcal{A}(T,K)$ and $0\leq k\leq 5$, $|\nabla_A^k \phi_A(x)|\lesssim_K g_T(x)$.

\noindent 
(2) There exists $L_2=L_2(K)>1$ such that every $A\in \mathcal{A}(T,K)$ satisfies 
\begin{equation*}
  \begin{split}
   &\left|\int_{T-L_2<t<T+L_2} |F(A+d_A^*\phi_A)|^2d\vol -\int_{T-L_2<t<T+L_2}\mathrm{tr}(F_A^2)\right| \leq 1/10, \\
   &\left|\int_{-T-L_2<t<-T+L_2} |F(A+d_A^*\phi_A)|^2d\vol -\int_{-T-L_2<t<-T+L_2}\mathrm{tr}(F_A^2)\right| \leq 1/10.
  \end{split}
\end{equation*}

\noindent 
(3) For any $A,B\in \mathcal{A}(T,K)$ and $0\leq k\leq 5$ 
\[  |\nabla_A^k\phi_A(x) - \nabla_B^k\phi_B(x)|\lesssim_K  \hat{g}_T(x) \norm{A-B}_{C^5_A}.\]
\end{lemma}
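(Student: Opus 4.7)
For part (1), the strategy is to track pointwise decay through the Picard iteration underlying Proposition \ref{prop: instanton approximation preliminary}. Starting with $\phi_0=0$ and iterating $\phi_n=(\nabla_A^*\nabla_A+2)^{-1}\eta_n$ with $\eta_{n+1}=-2F_A^+-2F_A^+\cdot\phi_n-2(d_A^*\phi_n\wedge d_A^*\phi_n)^+$, I would show inductively that $|\phi_n|,\,|\nabla_A\phi_n|\lesssim_K g_T$ with constants independent of $n$. The base case follows from $\supp F_A^+\subset\{T-1<|t|<T\}$ together with the Green-kernel estimate (\ref{eq: exponential decay of Green kernel}) and interior elliptic regularity on unit balls (using $\norm{F_A}_{C^5_A}\leq K$). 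In the inductive step the two terms of $\eta_{n+1}$ involving $F_A^+$ are supported in a strip of unit width around $t=\pm T$ and contribute $\lesssim_K g_T(x)$ after Green-kernel convolution, while the quadratic term satisfies $|d_A^*\phi_n\wedge d_A^*\phi_n|\lesssim_K g_T^2$ and is handled by the key convolution bound
\[ \int_X g(x,y)\,g_T(y)^2\,d\vol(y) \lesssim g_T(x), \]
which follows from a one-dimensional computation (the faster decay rate $2\sqrt{2}$ of $g_T^2$ is dominated by the Green-kernel rate $\sqrt{2}$, so the convolution inherits the slower rate). Passing to the $L^\infty$-limit yields $|\phi_A|\lesssim_K g_T$, and a standard elliptic bootstrap on $(\nabla_A^*\nabla_A+2)\phi_A=\eta_A$ extends the bound to $|\nabla_A^k\phi_A|\lesssim_K g_T$ for $k\leq 5$.

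For part (2), set $B=A+d_A^*\phi_A$ and $\psi=d_A^*\phi_A$. Since $B$ is ASD, $|F_B|^2 d\vol$ coincides pointwise with $\text{tr}(F_B\wedge F_B)$ in the convention of Proposition \ref{prop: instanton approximation preliminary}(3). The Chern--Simons transgression
\[ \text{tr}(F_B\wedge F_B) - \text{tr}(F_A\wedge F_A) = d\,\omega, \quad \omega:=\text{tr}\!\left(\psi\wedge F_A+\psi\wedge F_B-\tfrac{1}{3}\psi\wedge[\psi,\psi]\right),\]
combined with Stokes' theorem, rewrites the left-hand side of (2) as the boundary integral $\int_{\partial\Omega}\omega$ over $\Omega=\{T-L_2<t<T+L_2\}$, whose boundary is the disjoint union of two copies of $S^3$ at $t=T\pm L_2$. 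Part (1) gives $|\psi|,\,|\nabla_A\psi|\lesssim_K g_T \lesssim_K e^{-\sqrt{2}L_2}$ on these slices (for $T\geq L_2$; the residual case $T<L_2$ is smaller and direct), and $|F_A|,\,|F_B|\lesssim K$, so $|\omega|\lesssim_K e^{-\sqrt{2}L_2}$ pointwise and the boundary integral is $\lesssim_K e^{-\sqrt{2}L_2}$. Taking $L_2=L_2(K)$ large yields the bound $1/10$; the statement near $t=-T$ is identical by symmetry.

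For part (3), let $\Psi=\phi_A-\phi_B$. Subtracting the two defining PDEs,
\[ (\nabla_A^*\nabla_A+2)\Psi = (\eta_A-\eta_B) + \bigl((\nabla_B^*\nabla_B+2)-(\nabla_A^*\nabla_A+2)\bigr)\phi_B.\]
The right-hand side decomposes into (a) $F_A^+-F_B^+$, pointwise of size $\lesssim_K\norm{A-B}_{C^1_A}$ and supported in $\{T-1<|t|<T\}$; (b) bilinear differences in $\phi_A,\phi_B,d_A^*\phi_A,d_B^*\phi_B$, bounded by $\norm{A-B}_{C^5_A}$ times a $g_T$-factor coming from part (1); and (c) the difference of the second-order $A$-dependent operators applied to $\phi_B$, similarly bounded by $\norm{A-B}_{C^5_A}\cdot g_T$. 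Applying the Green-kernel inequality together with
\[ \int_X g(x,y)\,\mathbf{1}_{\{T-1<|t|<T\}}(y)\,d\vol(y)\lesssim g_T(x),\quad \int_X g(x,y)\,g_T(y)\,d\vol(y)\lesssim \hat{g}_T(x),\]
yields $|\Psi(x)|\lesssim_K \norm{A-B}_{C^5_A}\,\hat{g}_T(x)$. The higher covariant derivative bounds then follow by the same interior elliptic bootstrap as in part (1), applied to the linear equation for $\Psi$.

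The principal obstacle is the self-referential decay argument in part (1): the Picard iteration of Proposition \ref{prop: instanton approximation preliminary} is naturally phrased in the Taubes norm, and converting its convergence into the pointwise weighted bound $|\phi_A|\lesssim_K g_T$ hinges on the specific convolution estimate $g\ast g_T^2\lesssim g_T$ together with uniform control of the bootstrap constants across the iteration. A secondary technical difficulty in part (3) is that $\norm{A-B}_{C^5_A}$ depends on a choice of trivialization, so the estimate must be carried out in the exponential gauge centered at the evaluation point $x$, with careful book-keeping to ensure the implicit constants remain independent of $T$.
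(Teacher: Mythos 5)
Your overall architecture matches the paper's, but in part (1) you take a genuinely different (and riskier) route than the paper, and in doing so you manufacture a difficulty the paper avoids entirely. You propose to re-run the Picard iteration $\phi_0=0$, $\phi_{n+1}=(\nabla_A^*\nabla_A+2)^{-1}\eta_{n+1}$ and to propagate $|\phi_n|,|\nabla_A\phi_n|\lesssim_K g_T$ through it, and you correctly flag that the ``principal obstacle'' is keeping the constants uniform in $n$; but you do not resolve it, and this is a real gap: the quadratic term forces a recursion of the shape $C_{n+1}\leq C+C'C_n^2$, so uniformity only holds after showing the effective quadratic coefficient is small (using $\tnorm{F_A^+}\leq\tau$), which is exactly the kind of careful book-keeping you leave out. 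The paper sidesteps the whole issue. Proposition~\ref{prop: instanton approximation preliminary}~(2) already gives the pointwise bound on the \emph{limit}, $|\phi_A(x)|\lesssim\int_X g(x,y)|F_A^+(y)|\,d\vol(y)$, so combined with $\supp F_A^+\subset\{T-1<|t|<T\}$ one reads off $|\phi_A|\lesssim_K g_T$ at once, with no iteration. For higher derivatives, the paper does not attempt to get $|\nabla_A\phi_A|\lesssim g_T$ pointwise in one step; instead it first invokes Lemma~\ref{lemma: gradient estimate} to get the crude a~priori bound $\norm{\nabla_A\phi_A}_{L^\infty}\lesssim_K1$, which linearizes the quadratic term $(d_A^*\phi_A\wedge d_A^*\phi_A)^+$ in the elliptic estimate, then runs a weighted $L^2_2$ estimate on the quantity $R=\sup_t g_T(t)^{-1}\norm{\phi_A}_{L^2_{2,A}(t,t+1)}$ absorbed via interpolation, and bootstraps. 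Your proposal never mentions Lemma~\ref{lemma: gradient estimate}, and without that a~priori gradient bound your elliptic bootstrap for the limit $\phi_A$ has a quadratic term it cannot close against.

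Parts (2) and (3) are essentially the paper's argument. In (2) you use a Chern--Simons transgression and Stokes on the slab, as does the paper (the exact cubic coefficient in your $\omega$ looks off by a factor relative to $cs_A(a)=\mathrm{tr}(2a\wedge F_A+a\wedge d_Aa+\tfrac23a^3)$, but this is a convention slip, not a structural error). In (3) your decomposition of the equation for $\Psi=\phi_A-\phi_B$ and the two convolution bounds $g*\mathbf 1_{\{T-1<|t|<T\}}\lesssim g_T$, $g*g_T\lesssim\hat g_T$ are exactly the mechanism the paper uses; the one thing you gloss over is the intermediate unweighted bound $\norm{\phi_A-\phi_B}_{C^1_A}\lesssim_K\norm{A-B}_{C^1_A}$, which the paper derives from Proposition~\ref{prop: instanton approximation preliminary}~(4) plus interpolation before inserting it into the Green-kernel step; you should make that explicit rather than asserting the bilinear terms are ``bounded by $\norm{A-B}_{C^5_A}$ times a $g_T$-factor.''
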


\begin{proof}
(1) From Proposition \ref{prop: instanton approximation preliminary} (2), 
$|\phi_A(x)|\lesssim_K g_T(x)$. 
By Lemma \ref{lemma: gradient estimate}, $\norm{\nabla_A\phi_A}_{L^\infty} \lesssim_K 1$.
Set $R =\sup_{t\in \mathbb{R}}g_T(t)^{-1}\norm{\phi_A}_{L^2_{2,A}(t,t+1)}$.
We have the ASD equation
\[ (\nabla_A^*\nabla_A+2)\phi_A = -2F_A^+ -2F_A^+\cdot \phi_A -2(d_A^*\phi_A\wedge d_A^*\phi_A)^+.\]
From the elliptic estimate 
\begin{equation*}
   \begin{split}
   \norm{\phi_A}_{L^2_{2,A}(t,t+1)}&\lesssim_K \norm{\phi_A}_{L^2(t-1,t+2)}+\norm{(\nabla_A^*\nabla_A+2)\phi_A}_{L^2(t-1,t+2)}\\
   &\lesssim_K g_T(t) + \norm{d_A^*\phi_A\wedge d_A^*\phi_A}_{L^2(t-1,t+2)} \\
   &\lesssim_K g_T(t) + \norm{d_A^*\phi_A}_{L^2(t-1,t+2)} \quad (\norm{\nabla_A\phi_A}_{L^\infty}\lesssim_K 1).
   \end{split}
\end{equation*}  
Let $\varepsilon = \varepsilon(K)>0$ be a small number which will be fixed later.
From the interpolation (Gilbarg--Trudinger \cite[Theorem 7.28]{Gilbarg--Trudinger}),
\[ \norm{d_A^*\phi_A}_{L^2(t-1,t+2)}\leq 
   C(\varepsilon, K)\norm{\phi_A}_{L^2(t-1,t+2)} + \varepsilon \norm{\phi_A}_{L^2_{2,A}(t-1,t+2)}.\]
Hence 
\[ \norm{\phi_A}_{L^2_{2,A}(t,t+1)}\leq C'(\varepsilon,K)g_T(x) + C''(K)\varepsilon \norm{\phi_A}_{L^2_{2,A}(t-1,t+2)}.\]
Then 
\[ R\leq C'(\varepsilon, K) + C'''(K)\varepsilon R.\]
We choose $\varepsilon$ so that $C'''(K)\varepsilon < 1/2$. Then $R\lesssim_K 1$, i.e.
$\norm{\phi_A}_{L^2_{2,A}(t,t+1)} \lesssim_K g_T(x)$.
The rest of the argument is a standard bootstrapping.

(2) Set $a=d_A^*\phi_A$ and $cs_A(a) = \mathrm{tr}(2a\wedge F_A + a\wedge d_A a + \frac{2}{3}a^3)$.
We have $\mathrm{tr}(F(A+a)^2) -\mathrm{tr}F_A^2 = d cs_A(a)$. Then by the Stokes theorem 
\[ \int_{T-L_2<t<T+L_2}|F(A+a)|^2d\vol - \int_{T-L_2<t<T+L_2}\mathrm{tr} F_A^2 = 
   \int_{t=T+L_2}cs_A(a) - \int_{t=T-L_2}cs_A(a).\]
By (1), the right-hand-side goes to zero (uniformly in $A$ and $T$) as $L_2 \to \infty$.

(3) From (1), $|\nabla_A^k \phi_A(x)|, |\nabla_B^k \phi_B(x)| \lesssim_K g_T(x)$ for $0\leq k\leq 5$.
Set $a=B-A$. It is enough to prove the statement under the assumption $\norm{a}_{C^5_A}< 1$.
From the ASD equation, 
\begin{equation} \label{eq: equation for comparing A and B}
   \begin{split}
    &(\nabla_A^* \nabla_A+2)(\phi_A-\phi_B) = 2(F_B^+ -F_A^+) + 2(F_B^+\cdot\phi_B - F_A^+\cdot\phi_A)\\
    &+ 2\left\{(d_B^*\phi_B\wedge d_B^*\phi_B)^+ -(d_A\phi_A^*\wedge d_A^*\phi_A)^+\right\} 
    + a*\nabla_B\phi_B + (\nabla_A a)*\phi_B + a*a*\phi_B.
   \end{split}
\end{equation}
For any $t\in \mathbb{R}$, by the elliptic estimate
\begin{equation} \label{eq: equation comparing A and B second}
   \norm{\phi_A-\phi_B}_{L^2_{2,A}(t,t+1)} \lesssim_K \norm{\phi_A-\phi_B}_{L^2(t-1,t+2)} + g_T(t)\norm{a}_{C^1_A}
    + g_T(t) \norm{d_A^*\phi_A-d_A^*\phi_B}_{L^2(t-1,t+2)}.
\end{equation}
From Proposition \ref{prop: instanton approximation preliminary} (4) we have $\norm{\phi_A-\phi_B}_{L^\infty}\lesssim \norm{a}_{C^1_A}$.
So we get 
\[ \norm{\phi_A-\phi_B}_{L^2_{2,A}(t,t+1)}  \lesssim_K \norm{a}_{C^1_A} + \norm{d_A^*\phi_A-d_A^*\phi_B}_{L^2(t-1,t+2)}.\]
By using the interpolation as in (1), we get 
\[ \norm{\phi_A-\phi_B}_{L^2_{2,A}(t,t+1)} \lesssim_K \norm{a}_{C^1_A}.\]
Then the bootstrapping shows 
$\norm{\phi_A-\phi_B}_{C^1_A} \lesssim_K \norm{a}_{C^1_A}$.
By this estimate, the modulus of the right-hand-side of (\ref{eq: equation for comparing A and B})
is $\lesssim_K g_T(x) \norm{a}_{C^1_A}$.
Then by the Green kernel estimate (\ref{eq: Green kernel estimate})
\[ |\phi_A(x)-\phi_B(x)|\lesssim_K \hat{g}_T(x)\norm{a}_{C^1_A}.\]
Using this and $\norm{\phi_A-\phi_B}_{C^1_A} \lesssim_K \norm{a}_{C^1_A}$ in (\ref{eq: equation comparing A and B second}), 
we get $\norm{\phi_A-\phi_B}_{L^2_{2,A}(t,t+1)} \lesssim_K \hat{g}_T(t)\norm{a}_{C^1_A}$.
The rest of the proof is a bootstrapping.
\end{proof}

The next lemma is a preliminary version of Proposition \ref{prop: instanton approximation}.
Here we connect the set $U_\delta(A,T)$ to $\mathcal{A}(T,K)$ above.

\begin{lemma} \label{lemma: gluing instantons}
There exist positive numbers $\delta_1$ and $K$ such that 
for any $[A]\in \moduli_d$, any integer $T>1$ and $0<\delta\leq \delta_1$ we can construct a 
(not necessarily continuous) map
\[ U_\delta(A,T)\to \mathcal{A}(T,K), \quad [B]\mapsto \hat{B},\]
satisfying the following conditions.

\noindent 
(1) There exists a gauge transformation $g$ of $E$ over $|t|<T-1$ satisfying $g(\hat{B})=B$.

\noindent 
(2) There exists a gauge transformation $h$ of $E$ satisfying 
\[ \sup_{n\in \mathbb{Z}} e^{|n-G(\hat{A})|}\norm{h(\hat{B})-\hat{A}}_{L^2_{10,\hat{A}}(n,n+1)}\lesssim \delta.\]

\noindent
(3) The curvature $F(\hat{A})$ is supported in $|t|<T$. Moreover 
\begin{equation*}
  \begin{split}
    &\left|\int_X \mathrm{tr}(F(\hat{A})^2) - \int_{-T<t<T}|F_A|^2d\vol\right| \lesssim 1,  \\
    \int_{T-1<t<T}&\mathrm{tr}(F(\hat{A})^2) \geq 10, \quad  \int_{-T<t<-T+1}\mathrm{tr}(F(\hat{A})^2) \geq 10. 
  \end{split}
\end{equation*}
\end{lemma}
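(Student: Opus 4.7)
I would construct $\hat B$ from $B$ by cutting off outside $|t| < T$ and grafting on a standard BPST instanton in each of the two bands $T-1 < |t| < T$ and $-T < t < -T+1$, performing the whole procedure in a trivialization of $E$ that is common to $A$ and $B$.

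First I would exploit the definition of $U_\delta(A,T)$: pick the gauge transformation $g$ making $B_0 := g(B)$ satisfy $e^{|n-G(A,T)|}\|B_0-A\|_{L^2_{10,A}(n,n+1)} \leq \delta$ for every integer $-T \leq n \leq T-1$. After replacing $B$ by $B_0$, both connections are expressed in a common gauge on $|t| < T$. Next I would build a single trivialization $\Phi$ of $E$ on $|t| < T$ adapted to $A$, obtained by patching the temporal gauges provided by Lemma \ref{lemma: exponential decay} on the maximal subintervals of $(-T,T)\setminus G(A)$ where $\|F_A\|_{L^\infty}<\nu$, glued together by a bounded number of Uhlenbeck gauges over unit intervals containing the points of $G(A)$; the trivialization is then extended arbitrarily to all of $X$. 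Through $\Phi$, both $A$ and $B_0$ appear as explicit connection matrices on $(-T,T)\times S^3$.

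In this frame I define $\hat A$ and $\hat B$ by the \emph{same} recipe: retain the connection matrix of $A$ (resp.\ $B_0$) on $|t| < T-1$, taper it off via a smooth cut-off on the transition bands $T-1 < |t| < T$ and $-T<t<-T+1$, set it to zero on $|t| > T$, and add in each band the connection matrix of a fixed BPST instanton of a small fixed scale $\lambda=\lambda(d)$ centered at $p_\pm=(\pm(T-1/2),\theta_0)$ with compact support inside the band. The BPST summand is ASD, contributing nothing to $F^+$, and its energy $8\pi^2$ supplies both the lower bound $\int_{T-1<t<T}\mathrm{tr}(F(\hat A)^2)\geq 10$ and forces $\pm T\in G(\hat A)$, so that $G(\hat A)$ is aligned with $G(A,T)$ up to a bounded shift. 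Condition (1) is then tautological. Condition (3): the total energy change from $A$ to $\hat A$ is at most the two BPST energies plus $O(1)$ from the cut-off, hence $\lesssim 1$. Condition (2) reduces to the weighted bound on $B_0-A$ because the same affine modification produces $\hat B$ from $B_0$ and $\hat A$ from $A$; the difference $\hat B - \hat A$ is literally $B_0 - A$ on $|t|<T-1$ and is further suppressed on the transition bands where both connections were already close to the trivial model in $\Phi$.

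The main obstacle will be verifying $\tnorm{F^+_{\hat A}} \leq \tau$ uniformly in $A$, especially when $G(A)$ clusters near $\pm T$ so that the patched trivialization $\Phi$ does not provide an exponentially small connection matrix across the full band. Using the compact support of $F^+_{\hat A}$ in a band of bounded volume together with the exponential decay (\ref{eq: exponential decay of Green kernel}) of the Green kernel, the Taubes-norm estimate reduces to an $L^\infty$-bound on $F^+_{\hat A}$ in the band; since $A$ is itself ASD, this $L^\infty$-bound arises only from the cut-off derivatives acting on the connection matrix, which forces me to arrange $\Phi$ so that the connection matrix of $A$ is small precisely on the subregion where the cut-off derivative is supported, with the BPST graft bridging any remaining gap.
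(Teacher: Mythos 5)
Your construction of $\hat A$ does not actually address the Taubes-norm requirement, and this is a genuine gap. Cutting off $A$ over a unit-width band produces a self-dual curvature $F^+$ supported in that band with $\norm{F^+}_{L^\infty}\lesssim 1$, hence $\tnorm{F^+}\lesssim 1$ --- a bounded quantity, but with no reason to be below the fixed small threshold $\tau$ that membership in $\mathcal{A}(T,K)$ demands. Grafting a single BPST instanton of fixed scale $\lambda(d)$ per band does nothing for this: as you yourself observe, the BPST connection is (approximately) ASD and contributes nothing to $F^+$; it raises the energy and the $L^\infty$-norm of $F$, which is useful for forcing $G(\hat A)$ to contain points near $\pm T$ and for the lower bound in condition (3), but it does not reduce $\tnorm{F^+_{\hat A}}$. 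Your fallback --- ``arrange $\Phi$ so that the connection matrix of $A$ is small precisely on the subregion where the cut-off derivative is supported'' --- is not available in general: when $\norm{F_A}_{L^\infty}$ is of order $d>1$ throughout $(T-1,T)\times S^3$, no gauge makes the connection matrix small on any set spanning the $S^3$-directions there. The paper resolves this by gluing \emph{sufficiently many} concentrated instantons over the cut-off band, the Taubes--Donaldson approximation technique (Donaldson, \emph{The approximation of instantons}, pp.\ 190--199); the number of glued instantons grows with the energy of $A$ in the band, and the whole point of that construction is to drive $\tnorm{F^+}$ below any prescribed threshold. Without this step, or a substitute for it, $\hat A\in\mathcal{A}(T,K)$ is not established.

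There is also a secondary structural difference worth flagging. You cut off $\hat A$ and $\hat B$ over the \emph{same} band $T-1<|t|<T$; the paper instead uses nested cut-offs, building $\hat A$ from $A$ by cutting over $T-1/2<|t|<T$ and then defining $\hat B=\psi B+(1-\psi)\hat A$ with $\supp(d\psi)\subset\{T-1<|t|<T-1/2\}$, so the region where $\hat B$ interpolates is contained in the region where $\hat A=A$. This is what makes condition (2) transparent: $\hat B-\hat A$ vanishes for $|t|\geq T-1/2$, equals $\psi(B-A)$ on $T-1<|t|<T-1/2$, and equals $B-A$ on $|t|\leq T-1$, so the weighted bound reduces directly to the defining inequality of $U_\delta(A,T)$ together with the monotonicity $|n-G(\hat A)|\leq|n-G(A,T)|$. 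With your single-band scheme, $\hat B-\hat A$ on the band involves the difference of cut-off connection matrices in the frame $\Phi$, and you would still need the Donaldson gluing both for $\hat A$ and (or with sufficiently small $\delta_1$) for $\hat B$ to close the Taubes-norm estimate $\tnorm{F^+_{\hat B}}\leq\tau$; your current appeal to ``both connections were already close to the trivial model in $\Phi$'' over the band is unjustified for the same reason as above.
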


\begin{proof}
Choose a representative $A$ of $[A]$. First we define $\hat{A}$.
We take a cut-off $\varphi:\mathbb{R}\to [0,1]$ such that $\supp(d\varphi)\subset \{T-1/2<|t|<T\}$, 
$\varphi=1$ over $|t|\leq T-1/2$ and $\varphi=0$ over $|t|\geq T$.
We can choose a trivialization $u$ of $E$ over $T-1<|t|<T$ so that the connection matrix $u(A)$ satisfies 
$\norm{u(A)}_{C^{10}}\lesssim 1$.
We define a connection $A_0$ by $A_0 = u^{-1}(\varphi u(A))$.
$A_0=A$ over $|t|\leq T-1/2$, and $A_0$ is flat over $|t|\geq T$.
The self-dual curvature $F^+(A_0)$ is supported in $T-1/2<|t|<T$.
We try to reduce its Taubes norm by 
gluing sufficiently many concentrated instantons to $A_0$ over $T-1/2<|t|<T$.
This is a rather standard technique for specialists of gauge theory. For the detail,
see Donaldson \cite[pp. 190-199]{Donaldson approximation}.
After this gluing procedure, we get a connection $\hat{A}$ such that 
$\hat{A}=A$ over $|t|\leq T-1/2$, $F(\hat{A})$ is supported in $|t|<T$ and
\[ \supp(F^+(\hat{A}))\subset \{T-1/2<|t|<T\}, \quad 
   \tnorm{F^+(\hat{A})} \leq \tau/2, \quad 
   \norm{F(\hat{A})}_{C^{5}_{\hat{A}}} \lesssim 1.\]
We can also assume that $\hat{A}$ satisfies the condition (3) of the statement.
The last condition of (3) can be achieved by increasing the number of gluing instantons.
Moreover, by the same reasoning, we can assume $\norm{F(\hat{A})}_{L^\infty(T-1,T)}, \norm{F(\hat{A})}_{L^\infty(-T,-T+1)}\geq \nu$.
Hence $-T, T-1\in G(\hat{A})$. This fact together with $\hat{A}=A$ over $|t|\leq T-1/2$ implies 
\begin{equation} \label{eq: weight decrease under gluing}
   |n-G(\hat{A})| \leq |n-G(A,T)| \quad (-T\leq n\leq T-1).
\end{equation}

Next we take $[B]\in U_\delta(A,T)$ ($\delta\leq \delta_1$) different from $[A]$.
We can choose a representative $B$ of $[B]$ satisfying 
\[ e^{|n-G(A,T)|} \norm{B-A}_{L^2_{10,A}(n,n+1)} \leq \delta \quad (-T\leq n\leq T-1).\]
We take a cut-off $\psi:\mathbb{R}\to [0,1]$ such that 
$\supp(d\psi)\subset \{T-1<|t|<T-1/2\}$, $\psi=1$ over $|t|\leq T-1$ and $\psi=0$ over $|t|\geq T-1/2$.
Set $\hat{B}=\psi B + (1-\psi)\hat{A}$.
This satisfies the condition (1) because $\hat{B}=B$ over $|t|\leq T-1$.
$F^+(\hat{B})$ is supported in $\{T-1<|t|<T\}$ and 
\[ \tnorm{F^+(\hat{B})} \leq \const\cdot \delta_1 + \tnorm{F^+(\hat{A})} \leq \tau\]
if we choose $\delta_1$ sufficiently small.
We can find a universal constant $K>0$ so that $\norm{F(\hat{B})}_{C^{5}_{\hat{B}}}\leq K$ for all 
$[B]\in U_{\delta_1}(A,T)$.
Then $\hat{B}\in \mathcal{A}(T,K)$.

We want to check the condition (2). $\hat{B}-\hat{A}=0$ over $|t|\geq T-1/2$. 
For $|t|<T-1/2$ we have $\hat{A}=A$ and $\hat{B}-\hat{A} = \psi(B-A)$.
Using (\ref{eq: weight decrease under gluing}), for $-T\leq n\leq T-1$
\[ e^{|n-G(\hat{A})|}\norm{\hat{B}-\hat{A}}_{L^2_{10,\hat{A}}(n,n+1)} 
   \leq e^{|n-G(A,T)|}\norm{\psi(B-A)}_{L^2_{10,A}(n,n+1)} \lesssim \delta.\]
If $n<-T$ or $n >T-1$ then $e^{|n-G(\hat{A})|}\norm{\hat{B}-\hat{A}}_{L^2_{10,\hat{A}}(n,n+1)}$ is zero.
This shows (2).
\end{proof}

Then we can prove the main result of this section.

\begin{proposition}[$=$ Proposition \ref{prop: instanton approximation}]
For any $r>0$ we can choose $\delta_0=\delta_0(r)>0$ satisfying the following statement.
For any $[A]\in \moduli_d$ and any integer $T>1$ there exists a non-flat instanton $A'$ on $E$ and 
a (not necessarily continuous) map 
\[ U_{\delta_0}(A,T)\to V_r(A'), \quad [B]\mapsto [B'] \]
such that 

\noindent 
(1) 
\[ \norm{F_{A'}}_{L^\infty(X)} \leq D_0, \quad 
  \left| \int_X |F_{A'}|^2d\vol -\int_{(-T,T)\times S^3} |F_A|^2d\vol \right| \lesssim 1.\]
Here $D_0$ is a universal constant independent of $r$.

\noindent
(2) For any $[B]\in U_{\delta_0}(A,T)$ there exists a gauge transformation $h$ of $E$ over $|t|<T-1$ satisfying
\[ |h(B')-B|\lesssim g_T(t) \quad (|t|<T-1).\]
\end{proposition}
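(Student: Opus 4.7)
The strategy is to combine the gluing construction of Lemma \ref{lemma: gluing instantons} with the instanton approximation machinery of Proposition \ref{prop: instanton approximation preliminary} and Lemma \ref{lemma: continuity of instanton approximation}. Given $[A]\in \moduli_d$ and an integer $T>1$, I would first apply Lemma \ref{lemma: gluing instantons} with a parameter $\delta_0\leq \delta_1$ (to be fixed in terms of $r$ at the end), producing $\hat A\in \mathcal{A}(T,K)$ and a map $[B]\mapsto \hat B\in \mathcal{A}(T,K)$ on $U_{\delta_0}(A,T)$. I then set
\[ A' := \hat A + d_{\hat A}^*\phi_{\hat A}, \qquad B' := \hat B + d_{\hat B}^*\phi_{\hat B}, \]
using Proposition \ref{prop: instanton approximation preliminary}, so that both $A'$ and $B'$ are ASD connections on $E$; this gives the map $[B]\mapsto [B']$ of the proposition.

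The easy half of the verification concerns part (1) and the first half of part (2). The bound $\norm{F_{A'}}_{L^\infty(X)}\leq D_0$ for a universal $D_0$ follows from the uniform $C^5_{\hat A}$-control of $\hat A$ on $\mathcal{A}(T,K)$ (with $K$ universal) combined with Lemma \ref{lemma: gradient estimate}. The energy comparison comes from the identity $\int_X |F_{A'}|^2 d\vol = \int_X \mathrm{tr}(F(\hat A)^2)$ of Proposition \ref{prop: instanton approximation preliminary}(3) together with the energy statement of Lemma \ref{lemma: gluing instantons}(3); the same lemma's lower bounds $\int_{T-1<t<T}\mathrm{tr}(F(\hat A)^2)\geq 10$ and $\int_{-T<t<-T+1}\mathrm{tr}(F(\hat A)^2)\geq 10$ force $A'$ to be non-flat. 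For the pointwise bound in part (2), Lemma \ref{lemma: gluing instantons}(1) provides a gauge in which $\hat B = B$ over $|t|<T-1$, so there $B' - B = d_{\hat B}^*\phi_{\hat B}$, and Lemma \ref{lemma: continuity of instanton approximation}(1) supplies the required estimate $|d_{\hat B}^*\phi_{\hat B}|\lesssim g_T(t)$.

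The substantial remaining task is to show $[B']\in V_r(A')$. Using the gauge transformation $h$ provided by Lemma \ref{lemma: gluing instantons}(2), I would decompose
\[ h(B') - A' = (h(\hat B) - \hat A) + \bigl(d_{h(\hat B)}^*\phi_{h(\hat B)} - d_{\hat A}^*\phi_{\hat A}\bigr). \]
Lemma \ref{lemma: gluing instantons}(2) controls the first summand with exponential weight $e^{|n-G(\hat A)|}$ of rate $1$, while Lemma \ref{lemma: continuity of instanton approximation}(3) controls the second summand pointwise by a constant multiple of $\hat g_T(x)\cdot \delta_0$, once one upgrades the weighted $L^2_{10}$-bound of Lemma \ref{lemma: gluing instantons}(2) to a uniform $C^5_{\hat A}$-bound via Sobolev embedding on unit intervals.

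The main obstacle is the weight-matching step. The norm $\nnorm{\cdot}_{2,A'}$ defining $V_r(A')$ uses the weight $W_{A'}$ built from $G'(A')$, which grows only at rate $\alpha<1$ away from the bad intervals; one must check that $G(\hat A)$ and $G'(A')$ are comparable so that $W_{A'}$ and $e^{|n-G(\hat A)|}$ can be played off against each other. This should follow because $A' - \hat A = d_{\hat A}^*\phi_{\hat A}$ decays like $g_T$, so $A'$ and $\hat A$ have essentially the same curvature-concentration structure, together with the built-in concentration at $\pm T$ that forces these endpoints into $G(\hat A)$. Once this comparison is in place, on each inter-bad interval $(n_j,n_{j+1})$ of $G'(A')$ the integral $\int_{n_j}^{n_{j+1}} W_{A'}^2 |h(B')-A'|^2 dt$ splits over unit subintervals and the slow-growth factor $e^{2\alpha\min(t-n_j,n_{j+1}-t)}$ beats the fast-decay factor $e^{-2\min(t-n_j,n_{j+1}-t)}\delta_0^2$, summing to $O(\delta_0^2)$ by geometric series; the derivative terms are treated analogously using the elliptic bootstrap embedded in Lemma \ref{lemma: continuity of instanton approximation}, and the $\hat g_T$-contribution from the difference of the $\phi$'s is absorbed in the same way. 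Choosing $\delta_0=\delta_0(r)$ small enough then yields $\nnorm{h(B')-A'}_{2,A'}\leq r$, as required.
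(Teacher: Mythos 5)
Your proposal is correct and follows essentially the same route as the paper: define $A' = \hat A + d_{\hat A}^*\phi_{\hat A}$ and $B' = \hat B + d_{\hat B}^*\phi_{\hat B}$, verify (1), (2), and non-flatness from Lemma \ref{lemma: gluing instantons}(1)(3) and Lemma \ref{lemma: continuity of instanton approximation}(1)(2), then control $h(B')-A'$ by splitting off $h(\hat B)-\hat A$ (weighted by $e^{|n-G(\hat A)|}$) from the $\phi$-difference (weighted by $\hat g_T$ via Lemma \ref{lemma: continuity of instanton approximation}(3)), and exploit $\alpha<1$ together with the comparison $|n-G'(A')|\leq |n-G(\hat A)|+L$ to absorb everything into $\nnorm{\cdot}_{2,A'}$. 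The weight-matching step you flag as the main obstacle is precisely the step the paper isolates as (\ref{eq: weights for A' and A hat}) and (\ref{eq: G'(A') around T and -T}), and your sketch of its resolution matches the paper's.
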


\begin{proof}
Let $0<\delta_0 =\delta_0(r)\leq \delta_1$ ($\delta_1$ is the positive constant introduced in Lemma 
\ref{lemma: gluing instantons}). 
$\delta_0$ will be fixed later.
Take $[B]\in U_{\delta_0}(A,T)$ and set $B'= \hat{B}+d_{\hat{B}}^*\phi_{\hat{B}}$.
Here $\hat{B}$ is constructed by Lemma \ref{lemma: gluing instantons}, and $\phi_{\hat{B}}$ is constructed by 
Proposition \ref{prop: instanton approximation preliminary}.
$B'$ is an ASD connection.
$F(\hat{A})$ is compactly supported, and hence Proposition \ref{prop: instanton approximation preliminary} 
(3) implies 
\begin{equation} \label{eq: B' is instanton}
   \int_X |F(A')|^2 d\vol = \int_X \mathrm{tr}(F(\hat{A})^2) < \infty.
\end{equation}
Thus $A'$ is an instanton.
We will show $[B']\in V_r(A')$ and the above conditions (1) and (2).

First we check (1).
We have $F(A') = F(\hat{A}) + d_{\hat{A}}d_{\hat{A}}^*\phi_{\hat{A}} + (d_{\hat{A}}\phi_{\hat{A}})^2$.
Since $\hat{A}\in \mathcal{A}(T,K)$, we get $\norm{F(A')}_{L^\infty(X)}\lesssim 1$ by 
Lemma \ref{lemma: continuity of instanton approximation} (1).
Moreover by (\ref{eq: B' is instanton}) and Lemma \ref{lemma: gluing instantons} (3)
\[ \left|\int_X |F_{A'}|^2d\vol - \int_{-T<t<T}|F_A|^2 d\vol \right|
  = \left|\int_X \mathrm{tr}(F(\hat{A})^2) - \int_{-T<t<T}|F_A|^2 d\vol\right| \lesssim 1 \] 
Thus we have proved the condition (1).

Next we check (2).
From Lemma \ref{lemma: gluing instantons} (1)
we can assume $\hat{B}=B$ over $|t|<T-1$.
Then $B'-B= d_{\hat{B}}^*\phi_{\hat{B}}$ over $|t|<T-1$.
By Lemma \ref{lemma: continuity of instanton approximation} (1) we have 
$|d^*_{\hat{B}}\phi_{\hat{B}}|\lesssim g_T(t)$.
Thus $|B'-B|\lesssim g_T(t)$ over $|t|<T-1$.
This shows the condition (2).

The rest of the task is to show that $A'$ is non-flat and $[B']\in V_r(A')$.
From lemma \ref{lemma: continuity of instanton approximation} (2) and Lemma \ref{lemma: gluing instantons} (3)
\[ \int_{T-L_2<t<T+L_2}|F(A')|^2d\vol > 9, \quad \int_{-T-L_2<t<-T+L_2}|F(A')|^2d\vol >9.\]
This implies that $A'$ is not flat.
Moreover by Lemma \ref{lemma: exponential decay} the $L^\infty$-norms of $F(A')$ over 
$T-L_2<t<T+L_2$ and $-T-L_2<t<-T+L_2$ are both bounded from below by $\nu$.
Hence 
\begin{equation} \label{eq: G'(A') around T and -T}
  G'(A')\cap [T-L_2,T+L_2]\neq \emptyset, \quad  G'(A')\cap [-T-L_2,-T+L_2]\neq \emptyset.
\end{equation}
From Lemma \ref{lemma: continuity of instanton approximation} (1) $A'=\hat{A}+d_{\hat{A}}^*\phi_{\hat{A}}$ satisfies 
\[ |F(A')-F(\hat{A})|\lesssim g_T(t).\]
Then we can find a universal constant $L>L_2$ so that 
\[ t\in G(\hat{A}) \Longrightarrow  (t-L,t+L)\cap G'(A') \neq \emptyset.  \]
Then for all $n\in \mathbb{Z}$
\begin{equation} \label{eq: weights for A' and A hat}
   |n-G'(A')| \leq |n-G(\hat{A})| + L.
\end{equation}

From Lemma \ref{lemma: gluing instantons} (2) we can assume 
\begin{equation} \label{eq: difference between B hat and A hat}
   \sup_{n\in \mathbb{Z}} e^{|n-G(\hat{A})|} \norm{\hat{B}-\hat{A}}_{L^2_{10,\hat{A}}(n,n+1)} \lesssim \delta_0.
\end{equation}
$B'-A'= \hat{B}-\hat{A}+d_{\hat{B}}^*\phi_{\hat{B}}-d_{\hat{A}}^*\phi_{\hat{A}}$.
From Lemma \ref{lemma: continuity of instanton approximation} (1) 
we have $\norm{A'-\hat{A}}_{C^{4}_{\hat{A}}} \lesssim 1$.
Then 
\begin{equation*}
  \begin{split}
   e^{|n-G'(A')|}\norm{B'-A'}_{L^2_{2,A'}(n,n+1)} \lesssim \,
   &e^{|n-G'(A')|}\norm{\hat{B}-\hat{A}}_{L^2_{2,\hat{A}}(n,n+1)} \\
   &+ e^{|n-G'(A')|}\norm{d_{\hat{B}}^*\phi_{\hat{B}}-d_{\hat{A}}^*\phi_{\hat{A}}}_{L^2_{2,\hat{A}}(n,n+1)}.
  \end{split}
\end{equation*}
From (\ref{eq: weights for A' and A hat}) and (\ref{eq: difference between B hat and A hat}) 
\[ e^{|n-G'(A')|}\norm{\hat{B}-\hat{A}}_{L^2_{2,\hat{A}}(n,n+1)}  \lesssim \delta_0.\]
From Lemma \ref{lemma: continuity of instanton approximation} (3),
$\norm{d_{\hat{B}}^*\phi_{\hat{B}}-d_{\hat{A}}^*\phi_{\hat{A}}}_{L^2_{2,\hat{A}}(n,n+1)} \lesssim
\hat{g}_T(n) \norm{\hat{B}-\hat{A}}_{C^5_{\hat{A}}}$.
By (\ref{eq: difference between B hat and A hat}) and the Sobolev embedding, 
\[ e^{|n-G'(A')|}\norm{d_{\hat{B}}^*\phi_{\hat{B}}-d_{\hat{A}}^*\phi_{\hat{A}}}_{L^2_{2,\hat{A}}(n,n+1)}
   \lesssim e^{|n-G'(A')|}\hat{g}_T(n) \delta_0.\]
Recall $\hat{g}_T(n) = (1+|n-T|)e^{-\sqrt{2}|n-T|}+(1+|n+T|)e^{-\sqrt{2}|n+T|}$ and (\ref{eq: G'(A') around T and -T}).
So $e^{|n-G'(A')|}\hat{g}_T(n) \lesssim e^{|n-\{\pm T\}|}\hat{g}_T(n) \lesssim 1$.
Combining the above estimates, we conclude 
\[ \sup_{n\in \mathbb{Z}} e^{|n-G'(A')|}\norm{B'-A'}_{L^2_{2,A'}(n,n+1)} \lesssim \delta_0.\]
Recall the definition of the norm $\nnorm{\cdot}_{2,A'}$ in (\ref{eq: defintion of weighted norm}).
It uses the weight function $W_{A'}$, and this satisfies $W_{A'}(t) \lesssim e^{\alpha |t-G'(A')|}$.
Since $\alpha<1$ we get 
\[ \nnorm{B'-A'}_{2,A'} \lesssim \sup_{n\in \mathbb{Z}} e^{|n-G'(A')|}\norm{B'-A'}_{L^2_{2,A'}(n,n+1)} \lesssim \delta_0.\]
Thus we can choose $\delta_0\ll r$ so that $\nnorm{B'-A'}_{2,A'}\leq r$ and hence
 $[B']\in V_r(A')$.
\end{proof}

\section{Quantitative deformation theory: proof of Proposition \ref{prop: quantitative deformation theory}}
\label{section: quantitative deformation theory}

The purpose of this section is to prove Proposition \ref{prop: quantitative deformation theory}.
Let $D>0$ be a positive number, and let $A$ be a non-flat instanton
on $E$ satisfying $\norm{F_A}_{L^\infty(X)}\leq D$.
First we recall some notations. We denote 
\[ G'(A) = \{n\in \mathbb{Z}| \norm{F_A}_{L^\infty(n,n+1)}\geq \nu/2\}
   = \{n_1<n_2<\dots<n_G\}.\]
Let $W_A$ be the weight function introduced in Section \ref{subsection: setting}.
It is a smoothing of the function $e^{\alpha|t-G'(A)|}$ $(0<\alpha<1)$.
For $u\in \Omega^i(\ad E)$ we define ($n_0=-\infty$ and $n_{G+1}=+\infty$)
\[ \nnorm{u}_{k,A} = \max_{0\leq j\leq G}\norm{W_A u}_{L^2_{k,A}(n_j,n_{j+1})}.\]
The connection $A$ is fixed throughout this section.
So we usually abbreviate $\nnorm{u}_{k,A}$ and $\nnorm{u}_{0,A}$ to 
$\nnorm{u}_k$ and $\nnorm{u}$ respectively.
We also abbreviate the weight function $W_A$ to $W$.
We define $L^{2,W}_{k}(\Omega^i(\ad E))$ as the Banach space of locally $L^2_k$ sections
$u\in \Omega^i(\ad E)$ satisfying $\nnorm{u}_{k}<\infty$.
Our main object is the space 
\[ V_r(A) = \{[B] :\text{ ASD on $E$}|\, \exists g:E\to E \text{ s.t. }
              \nnorm{g(B)-A}_{2}\leq r\} \quad (r>0).\]

First we prepare a lemma concerning $\Omega^0(\ad E)$.
Here we essentially use our good/bad decomposition structure.

\begin{lemma} \label{lemma: estimate on Omega^0}
(1) For $u\in L^{2,W}_{3}(\Omega^0(\ad E))$, 
\[ \norm{u}_{L^\infty(X)} \lesssim_D \nnorm{d_A u}_{2}.\]

\noindent 
(2) For $u\in L^{2,W}_{k}(\Omega^0(\ad E))$ with $k\geq 1$, 
$\nnorm{u}_{k} \lesssim_{A} \nnorm{d_A u}_{k-1}$.
Note that the implicit constant here depends on $A$. Hence this is less effective than (1).
\end{lemma}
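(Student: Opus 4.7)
The plan is to treat (1) and (2) separately using the good/bad interval dichotomy built into $G'(A)$. For (1) I will bound $\norm{u}_{L^\infty(n,n+1)}$ uniformly in $n\in\mathbb{Z}$ and then take the supremum. When $n\in G'(A)$ is a good integer, I will establish the analogue
\[\norm{u}_{L^\infty(n,n+1)} \lesssim_D \norm{d_A u}_{L^2_{2,A}(n,n+1)} \leq \nnorm{d_A u}_{2}\]
of Claim~\ref{claim: linear non-degeneracy estimate} by contradiction and Uhlenbeck compactness. A violating sequence $(A_k,u_k)$ with $\norm{F_{A_k}}_{L^\infty(X)}\le D$, $\norm{F_{A_k}}_{L^\infty(n,n+1)}\ge\nu/2$, $\norm{u_k}_{L^\infty(n,n+1)}=1$ and $\norm{d_{A_k}u_k}_{L^2_{2,A_k}(n,n+1)}\to 0$ would, after gauge, produce a smooth ASD limit $A_\infty$ globally defined on $X$ and still satisfying $\norm{F_{A_\infty}}_{L^\infty(n,n+1)}\ge\nu/2$ (hence non-flat), together with a nonzero limit $u_\infty\in L^\infty$ solving $d_{A_\infty}u_\infty=0$; elliptic regularity for $u_k$ and the compact embedding $L^2_3\hookrightarrow L^\infty$ over the enlarged slab produce the non-vanishing of the limit. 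This forces $A_\infty$ to be reducible on an open set of $X$, hence globally reducible by the Donaldson--Kronheimer unique continuation theorem, contradicting Lemma~\ref{lemma: non-flat implies irreducible}.

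When $n$ lies in a bad slab $(n_j,n_{j+1})$ (with $n_j,n_{j+1}\in G'(A)\cup\{\pm\infty\}$), I will use Lemma~\ref{lemma: exponential decay} to fix a temporal-gauge trivialization over the slab whose connection matrix is exponentially small in $\dist(t,\{n_j,n_{j+1}\})$. In this gauge $\partial_t u$ coincides with the $dt$-component of $d_A u$, so integrating from a base point $s\in(n_j,n_j+1)$ in the adjacent good interval gives
\[|u(t,\theta)| \leq |u(s,\theta)| + \int_s^t \bigl|(d_A u)(\partial_{t'},(t',\theta))\bigr|\,dt'.\]
The boundary value $|u(s,\theta)|$ is controlled by the good-interval case. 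For the integral I apply the Sobolev embedding $H^2(S^3)\hookrightarrow L^\infty(S^3)$ slice-wise and Cauchy--Schwarz against the weight,
\[\int_{n_j}^{n_{j+1}} \norm{(d_A u)(\partial_{t'},(t',\cdot))}_{L^\infty(S^3)}\,dt' \lesssim \left(\int_{n_j}^{n_{j+1}} W^{-2}\,dt'\right)^{1/2}\nnorm{d_A u}_{2}.\]
Crucially, the first factor is bounded by $O(1/\sqrt{\alpha})$ \emph{independently of slab length}, since $W(t)\gtrsim e^{\alpha\min(t-n_j,\,n_{j+1}-t)}$, and the second is dominated by $\nnorm{d_A u}_{2}$. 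The two infinite end-slabs are treated identically, after noting that $\nnorm{u}_{3}<\infty$ together with $W\to\infty$ forces $u(t,\theta)\to 0$ as $t\to\pm\infty$. Taking the supremum over $n$ yields (1).

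For (2) the constant may depend on $A$, so I will combine (1) with standard elliptic bootstrapping. Part (1) already gives $\norm{u}_{L^\infty(X)}\lesssim_D\nnorm{d_A u}_{2}$; using the Weitzenb\"ock identity $d_A^*d_A=\nabla_A^*\nabla_A$ on 0-forms together with the interior elliptic estimate for $\nabla_A^*\nabla_A$ applied slab-by-slab, and commuting $W$ through covariant derivatives via $|\nabla^j W|\lesssim_j W$, one obtains $\nnorm{u}_k\lesssim_A \nnorm{u}_0 + \nnorm{d_A u}_{k-1}$. The base Poincar\'e-type inequality $\nnorm{u}_0\lesssim_A\nnorm{d_A u}_0$ follows from the injectivity and closed-range property of $d_A$ on the weighted $L^2$-space, both of which rest on the fact that any covariantly constant section of $\ad E$ must vanish by Lemma~\ref{lemma: non-flat implies irreducible}.

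The main obstacle lies in the good-case estimate of part~(1): keeping the constant dependent only on $D$ demands that the Uhlenbeck compactness argument be uniform across all local slabs of non-flat ASD connections with $\norm{F_A}_{L^\infty(X)}\le D$, and it is essential that the limit $A_\infty$ remain globally defined on $X$ so that unique continuation (a global statement) applies. A second delicacy is the uniform bound $\int_{n_j}^{n_{j+1}} W^{-2}\lesssim 1/\alpha$ independent of slab length in the bad-case integration; without it the constant in (1) would depend on the longest bad slab of $A$ and the $D$-only dependence would be lost.
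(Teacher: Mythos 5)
Your part (1) is essentially the paper's argument with minor presentational variations: the paper establishes the local $L^2$ bound $\norm{u}_{L^2(t,t+1)}\lesssim_D\nnorm{d_A u}$ for every $t$ (using the contradiction argument on good slabs, then integrating $|u(t,\theta)|\leq|u(s,\theta)|+\int |\nabla_A u|\,d\tau$ from the nearest good slab with a weighted Cauchy--Schwarz) and only afterward applies the Sobolev embedding once over a unit slab, whereas you go straight to slice-wise $L^\infty$; and the paper uses Kato's inequality on $|u|$ in place of your temporal-gauge reduction. Both routes work, and you correctly isolate the two real delicacies: uniformity of the good-slab estimate in $D$ via Uhlenbeck compactness and unique continuation, and the slab-length-independent bound $\int W^{-2}\lesssim 1/\alpha$. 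Your remark that $\nnorm{u}_3<\infty$ forces $u\to 0$ at $\pm\infty$ is not actually needed here, since one always integrates outward from the adjacent good interval.

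Part (2), however, has a genuine gap. You reduce the base estimate $\nnorm{u}_0\lesssim_A\nnorm{d_A u}_0$ to ``injectivity and closed range of $d_A$ on the weighted $L^2$ space, both of which rest on the vanishing of covariantly constant sections.'' Injectivity does follow trivially (a covariantly constant $u$ has constant pointwise norm and so cannot lie in $L^{2,W}$, since $\int W^2=\infty$; irreducibility is not even needed). But closed range is a quantitative condition that irreducibility cannot deliver: for instance $d/dt$ is injective on $L^2(\mathbb{R})$ without closed range, and what rescues the weighted version is precisely a non-resonance property of the weight exponent $\alpha$ relative to the spectral gap on the cross-section, not the irreducibility of $A$. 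Moreover, in this paper the Fredholm/closed-range structure (used in Lemma~\ref{lemma: index formula} to show $\mathcal{D}_A$ is surjective) is \emph{derived from} Lemma~\ref{lemma: estimate on Omega^0}~(2), so invoking it to prove (2) is circular. The paper instead proves (2) constructively: the local $L^2$ estimate from part~(1) handles the bounded middle slabs $(n_j,n_{j+1})$ (after multiplying by $W^2$ and integrating, with a constant depending on the slab length, hence on $A$), and for the two infinite end slabs one integrates $|u(t,\theta)|\leq\int_t^\infty|\nabla_A u(s,\theta)|\,ds$, inserts $W(t)W(s)^{-1}\lesssim e^{\alpha(t-s)}$, applies Cauchy--Schwarz, and integrates in $t$ to get $\int_{t>n_G}W^2|u|^2\lesssim\alpha^{-2}\nnorm{d_A u}^2$. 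Your elliptic bootstrap step $\nnorm{u}_k\lesssim_A\nnorm{u}_0+\nnorm{d_A u}_{k-1}$ is fine; it is the base case that needs a direct argument rather than an appeal to closed range.
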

\begin{proof}
(1) This follows from the Sobolev embedding and 
\begin{equation} \label{eq: estimate on Omega^0}
 \norm{u}_{L^2(t,t+1)} \lesssim_D \nnorm{d_A u} \quad (\forall t\in \mathbb{R}).
\end{equation}
By the same argument as in Claim \ref{claim: linear non-degeneracy estimate},
\begin{equation} \label{eq: non-degeneracy estimate}
  \norm{u}_{L^2(n,n+1)} \lesssim_D \norm{d_A u}_{L^2(n,n+1)} \quad (\forall n\in G'(A)).
\end{equation}

Take $t\in (n_1,n_2)$ with $|t-n_1|\leq |t-n_2|$ (other cases can be treated in the same way).
For each $n_1 <s<n_1+1$
\[ |u(t,\theta)|\leq |u(s,\theta)| + \left|\int_s^t |\nabla_A u|d\tau\right|
   \leq |u(s,\theta)| + \int_{n_1}^t |\nabla_A u|d\tau.\]
\[ \int_{n_1}^t |\nabla_A u|d\tau = \int_{n_1}^t e^{-\alpha(\tau-n_1)} e^{\alpha(\tau-n_1)}|\nabla_A u| d\tau
   \leq \sqrt{\int_{n_1}^t e^{-2\alpha(\tau-n_1)}d\tau}
   \sqrt{\int_{n_1}^t e^{2\alpha(\tau-n_1)}|\nabla_A u|^2 d\tau}.\]
Since $e^{\alpha|t-G'(A)|}\lesssim W(t)$, we get 
\[ \int_{n_1}^t |\nabla_A u|d\tau \lesssim \sqrt{\int_{n_1}^t W^2 |\nabla_A u|^2 d\tau}, \]
\[ |u(t,\theta)|^2 \lesssim |u(s,\theta)|^2 + \int_{n_1}^t W^2 |\nabla_A u|^2 d\tau.\]
Integrating over $(s,\theta)\in (n_1,n_1+1)\times S^3$, 
\[ \int_{S^3} |u(t,\theta)|^2 d\vol_{S^3}(\theta) \lesssim \int_{(n_1,n_1+1)\times S^3} |u|^2 d\vol
   + \int_{(n_1,t)\times S^3}W^2|\nabla_A u|^2d\vol.\]
Using (\ref{eq: non-degeneracy estimate}) 
\[ \int_{S^3} |u(t,\theta)|^2 d\vol_{S^3}(\theta) \lesssim \nnorm{d_A u}^2.\]
The desired estimate (\ref{eq: estimate on Omega^0}) follows from this.

(2) It is enough to prove $\nnorm{u}\lesssim_A \nnorm{d_A u}$, and this follows from (\ref{eq: estimate on Omega^0}) and 
\[ \int_{\{t<n_1\}\cup \{t>n_G\}} W^2|u|^2 d\vol \lesssim \nnorm{d_A u}^2.\]
For simplicity we assume $n_G=0$ and prove 
\[ \int_{t>0} W^2 |u|^2 d\vol \lesssim \nnorm{d_A u}^2.\]
We can assume that $u$ is smooth and compactly supported. Let $t>0$.
\[ |u(t,\theta)| \leq \int_t^\infty |\nabla_A u(s,\theta)|ds = 
   \int_t^\infty W(s)^{-1}W(s)|\nabla_A u(s,\theta)|ds.\]
For $0<t<s$ we have $W(t)W(s)^{-1}\lesssim e^{\alpha (t-s)}$.
Hence 
\[ W(t)|u(t,\theta)| \lesssim \int_t^\infty e^{\alpha(t-s)}W(s)|\nabla_A u(s,\theta)|ds,\]
\begin{equation*}
   \begin{split}
   W(t)^2|u(t,\theta)|^2 &\lesssim \int_t^\infty e^{\alpha(t-s)}ds 
   \int_t^\infty e^{\alpha(t-s)}W(s)^2 |\nabla_A u(s,\theta)|^2 ds \\
   &= \frac{1}{\alpha} \int_t^\infty e^{\alpha(t-s)}W(s)^2 |\nabla_A u(s,\theta)|^2 ds.
   \end{split}
\end{equation*}
Therefore 
\begin{equation*}
  \begin{split}
    \int_0^\infty W(t)^2 |u(t,\theta)|^2 dt &\lesssim 
    \int_0^\infty \left(\int_0^s e^{\alpha(t-s)}dt\right)W(s)^2 |\nabla_A u(s,\theta)|^2 ds \\
    &\leq \frac{1}{\alpha} \int_0^\infty W(s)^2 |\nabla_A u(s,\theta)|^2 ds.
  \end{split}
\end{equation*}
Thus 
\[ \int_{t>0} W^2 |u|^2 d\vol \lesssim \int_{t>0}W^2 |\nabla_A u|^2 d\vol \leq \nnorm{d_A u}^2.\]
\end{proof}

Let $d_A^{*,W}:\Omega^1(\ad E)\to \Omega^0(\ad E)$ be the formal adjoint of 
$d_A:\Omega^0(\ad E)\to \Omega^1(\ad E)$ with respect to the weighted inner product:
For compactly supported smooth $u\in \Omega^0(\ad E)$ and $a\in \Omega^1(\ad E)$ 
\[ \int_X W^2\langle d_A u, a\rangle d\vol = \int_X W^2\langle u, d_A^{*,W}a\rangle d\vol.\]
The following lemma studies the Coulomb gauge condition.

\begin{lemma} \label{lemma: Coulomb gauge}
(1)
For $u\in L^{2,W}_{1}(\Omega^0(\ad E))$ and $a\in L^{2,W}(\Omega^1(\ad E))$ with $d_A^{*,W}a=0$
(in the distributional sense)
\[ \nnorm{d_A u} + \nnorm{a} \lesssim_D \nnorm{d_A u + a}.\]

\noindent
(2) Let $k\geq 0$. For $u\in L^{2,W}_{k+1}(\Omega^0(\ad E))$ and $a\in L^{2,W}_{k}(\Omega^1(\ad E))$ with $d_A^{*,W}a=0$
\[ \nnorm{d_A u}_{k} + \nnorm{a}_{k}\lesssim_{k,D} \nnorm{d_A u + a}_{k}.\]
\end{lemma}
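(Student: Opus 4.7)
The idea is to localize the global orthogonality implicit in the Coulomb condition $d_A^{*,W} a = 0$ (equivalently, $d_A^*(W^2 a) = 0$) to each interval $(n_j, n_{j+1})$ by cut-off functions, and then to close the estimate by polarization and Young's inequality. For each $j$, I will introduce a smooth cutoff $\chi_j$ equal to $1$ on $(n_j, n_{j+1})$ and supported in the three consecutive intervals $(n_{j-1}, n_{j+2})$, with its transition region in the immediate neighbourhoods of the good points $n_j, n_{j+1}$, where $W \lesssim 1$ and $|d\chi_j| \lesssim 1$. A direct product-rule computation using $d_A^*(W^2 a) = 0$ yields
\[
 d_A^*(\chi_j^2 W^2 a) = -2\chi_j W^2 \langle d\chi_j, a\rangle,
\]
so integration by parts gives the approximate orthogonality
\[
 \Bigl|\int_X \chi_j^2 W^2 \langle d_A u, a\rangle\, d\vol\Bigr| \lesssim \norm{Wu\, d\chi_j}_{L^2}\,\nnorm{a}.
\]
Since $\supp d\chi_j$ lies inside good intervals, the local non-degeneracy estimate underlying Lemma~\ref{lemma: estimate on Omega^0} (specifically Claim~\ref{claim: linear non-degeneracy estimate} applied near the good points) controls $\norm{Wu\,d\chi_j}_{L^2}\lesssim_D \nnorm{d_A u}$, with the implicit constant depending only on $D$.

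Expanding $|v|^2 = |d_A u|^2 + |a|^2 + 2\langle d_A u, a\rangle$ against $\chi_j^2 W^2$ on each interval and inserting the above bound gives
\[
 \norm{Wd_A u}_{L^2(n_j, n_{j+1})}^2 + \norm{Wa}_{L^2(n_j, n_{j+1})}^2 \lesssim \nnorm{v}^2 + C_D \,\nnorm{d_A u}\,\nnorm{a}.
\]
Taking the supremum over $j$ on both sides and applying Young's inequality to absorb the cross term into the left-hand side yields $\nnorm{d_A u} + \nnorm{a} \lesssim_D \nnorm{v}$, which is part~(1).

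For part~(2), the plan is to combine $v = d_A u + a$ with the Coulomb condition to derive an elliptic system: on the $0$-form side, $d_A^{*,W} d_A u = d_A^{*,W} v$, and on the $1$-form side, $d_A a = d_A v - [F_A, u]$ together with $d_A^{*,W} a = 0$. Standard interior $L^2_k$-elliptic estimates for the twisted weighted Laplacian applied interval-by-interval---with the commutators between the covariant derivatives and $F_A$ controlled by $D$---let me bootstrap on the derivative order and reduce the $k$-th estimate to the $(k-1)$-th and ultimately to part~(1). The constants depend on $k$ and on $A$ only through $D$ (and the order $k$), consistent with the $\lesssim_{k,D}$ in the statement.

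The main obstacle is ensuring the Young-inequality absorption in part~(1) actually closes cleanly: the cross-term coefficient $C_D$ is uncontrolled as a function of $D$, so a crude absorption fails for large $D$. The remedy is to keep the $L^2$ control of $u$ strictly local---only on $\supp d\chi_j$, which sits inside a single good interval---so that the cross term acquires exactly one factor of $C_D$ multiplying a single product $\nnorm{d_A u}\,\nnorm{a}$, rather than multiple $D$-dependent factors on each side. With this asymmetric bookkeeping the resulting quadratic inequality on each interval becomes absorbable by a Young's inequality with an appropriate splitting of the weights.
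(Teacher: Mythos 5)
Your proof correctly sets up the integration by parts against $W^2\chi_j^2$ and correctly identifies the cross-term estimate
\[
\Bigl|\int_X \chi_j^2 W^2 \langle d_A u, a\rangle\, d\vol\Bigr| \lesssim \norm{Wu\,d\chi_j}_{L^2}\,\nnorm{a} \lesssim_D \nnorm{d_A u}\,\nnorm{a},
\]
using (\ref{eq: non-degeneracy estimate}) on the good intervals. You also correctly flag the danger: the resulting inequality is of the form
\[
\nnorm{d_A u}^2 + \nnorm{a}^2 \;\lesssim\; \nnorm{d_A u + a}^2 + C_D\,\nnorm{d_A u}\,\nnorm{a},
\]
and if $C_D$ is large this cannot be closed. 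The trouble is that your proposed remedy does not fix this. ``Asymmetric bookkeeping'' does nothing here: the quadratic form $x^2 + y^2 - C_D xy$ is indefinite whenever $C_D>2$, regardless of how Young's inequality is split. Concretely, with $v=d_A u+a=0$, $\nnorm{d_A u}=\nnorm{a}=1$, the displayed inequality is satisfied (as soon as $C_D\geq 2$) yet gives no control whatsoever. The constant $C_D$ comes from Claim~\ref{claim: linear non-degeneracy estimate}, which blows up as the curvature bound on good intervals is relaxed, so there is no reason for $C_D<2$; in fact the lemma is only interesting precisely when this local constant is not small.

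The mechanism the paper uses, and which your argument is missing, is a smallness parameter that can be tuned \emph{against} $C_D$. Instead of cutting off interval by interval, the paper groups $N=N(D)$ consecutive good points into blocks $I_0,\dots,I_q$ and chooses cutoffs $\varphi_k$ whose transition is spread across $N$ good intervals, so that $|d\varphi_k|\lesssim 1/N$ while $\supp(d\varphi_k)$ still sits inside good intervals (so that the local non-degeneracy estimate applies). This produces an extra factor $1/N$ in front of the cross term, yielding $R^2\leq C(D)(S+R/N)R$; taking $N$ large enough that $C(D)/N<1/2$ then lets the cross term be absorbed. Your single-interval cutoff has $|d\chi_j|\lesssim 1$, so there is no free parameter to beat $C(D)$, and the absorption genuinely fails. (Part~(2) you reduce to part~(1), so it inherits the same gap; the paper's own reduction of (2) to (1), via elliptic regularity of $d_A^{*,W}+d_A^+$ and $(d_A^{*,W}+d_A^+)d_A u = d_A^{*,W}(d_Au+a)$, is cleaner than the commutator bootstrap you sketch, but both are fine once (1) is in place.)
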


\begin{proof}
(1) We can suppose that $u$ is smooth and compactly supported.
Let $N=N(D)>0$ be a sufficiently large integer which will be fixed later.
It is very important that several implicit constants below do not depend on $N$.
Recall $G'(A) = \{n_1<\dots<n_G\}$. Let $G=qN+r$ with $0\leq r<N$.
We decompose $\mathbb{R}$ as follows:
\[ \mathbb{R} = (-\infty,n_N]\cup [n_N,n_{2N}]\cup \dots \cup [n_{(q-1)N},n_{qN}]\cup [n_{qN},\infty).\]
We call these intervals $I_0, I_1,\dots,I_q$ respectively.
If $q=0$, then we simply set $I_0=\mathbb{R}$.
We set $I_{-1}=I_{q+1}=\emptyset$.
For $0\leq k\leq q$ we take a cut-off function $\varphi_k:\mathbb{R}\to [0,1]$ such that 
$\varphi_k=1$ on $I_k$, $\supp(\varphi_k) \subset I_{k-1}\cup I_k\cup I_{k+1} =:J_k$ and 
\begin{equation} \label{eq: support of cut-off}
 \supp(d\varphi_k) \subset \bigcup_{n\in G'(A)} (n,n+1), \quad |d\varphi_k|\lesssim \frac{1}{N}.
\end{equation}

From $d_A^{*,W}a=0$
\[ \int_X W^2\langle d_A u+a,d_A(\varphi_k u)\rangle d\vol 
   = \int_X W^2\langle d_A u,d_A(\varphi_k u)\rangle d\vol.\]
\begin{equation*}
   \begin{split}
    \biggl| \int_X W^2&\langle d_A u,d_A(\varphi_k u)\rangle d\vol \biggr| \gtrsim
   \int_{I_k}W^2|d_A u|^2 d\vol -\frac{1}{N} \int_{\supp(d\varphi_k)}W^2|d_A u| |u| d\vol \\
   &\geq \int_{I_k}W^2|d_A u|^2 d\vol - \frac{1}{N}\sqrt{\int_{J_k}W^2|d_A u|^2 d\vol}
   \sqrt{\int_{\supp(d\varphi_k)}W^2 |u|^2d\vol}.
   \end{split}
\end{equation*}
\begin{equation*}
   \begin{split}
   \biggl|\int_X W^2 &\langle d_A u+a,d_A(\varphi_k u)\rangle d\vol \biggr| \leq 
   \sqrt{\int_{J_k}W^2|d_A u+a|^2d\vol}\sqrt{\int_X W^2|d_A(\varphi_k u)|^2d\vol} \\
   &\lesssim \sqrt{\int_{J_k}W^2|d_A u+a|^2d\vol} \sqrt{\int_{\supp(d\varphi_k)}W^2 |u|^2d\vol 
   + \int_{J_k} W^2|d_A u|^2d\vol}.
   \end{split}
\end{equation*}
From (\ref{eq: non-degeneracy estimate}) in the proof of Lemma \ref{lemma: estimate on Omega^0} and the above 
(\ref{eq: support of cut-off}),
\[ \int_{\supp(d\varphi_k)}W^2|u|^2d\vol \lesssim 
   \int_{\supp(d\varphi_k)}|u|^2d\vol \lesssim_D \int_{J_k}|d_A u|^2 d\vol 
   \lesssim \int_{J_k} W^2 |d_A u|^2 d\vol.\]
Combining these estimates,
\begin{equation*}
   \begin{split}
    \int_{I_k}& W^2 |d_A u|^2 d\vol \\
   &\lesssim_D \sqrt{\int_{J_k}W^2 |d_A u|^2 d\vol}
   \left(\sqrt{\int_{J_k}W^2 |d_A u+a|^2d\vol} + \frac{1}{N}\sqrt{\int_{J_k}W^2|d_A u|^2 d\vol}\right).
   \end{split}
\end{equation*}
Set 
\[ R = \max_k \sqrt{\int_{I_k} W^2 |d_A u|^2 d\vol}, \quad 
   S = \max_k \sqrt{\int_{I_k}W^2 |d_A u+a|^2d\vol}.\]
Then we get 
\[ R^2 \leq C(D) \left(S+\frac{R}{N}\right)R, \text{ i.e. } R\leq C(D)S + \frac{C(D)}{N}R.\]
We choose $N=N(D)$ so that $C(D)/N<1/2$. Then 
$R\leq 2C(D)S$. 
We have $\nnorm{d_A u}\leq R$ and $S\lesssim_D \nnorm{d_A u+a}$.
Thus $\nnorm{d_A u} \lesssim_D \nnorm{d_A u + a}$.
Then $\nnorm{a} \lesssim_D \nnorm{d_A u+a}$.

(2) Let $k\geq 1$.
By the elliptic regularity of the operator $d_A^{*,W}+d_A^+$, 
\[ \nnorm{d_A u}_{k} \lesssim_{k,D}\nnorm{d_A u} + \nnorm{(d_A^{*,W} + d_A^+)d_Au}_{k-1}.\]
We have $(d_A^{*,W} + d_A^+)d_A u = d_A^{*,W}d_A u = d_A^{*,W}(d_A u + a)$.
Hence by (1)
\[ \nnorm{d_A u}_{k} \lesssim_{k,D} \nnorm{d_A u} + \nnorm{d_A u + a}_{k} 
   \lesssim_D \nnorm{d_A u+a}_{k}.\]
\end{proof}

Recall the Weitzenb\"{o}ck formula (\ref{eq: Weitzenbock formula}):
\[ d_A^+d_A^* \phi = \frac{1}{2}(\nabla_A^*\nabla_A + 2) \phi \quad (\phi\in \Omega^+(\ad E)).\]
Here $d_A^*$ and $\nabla_A^*$ are the formal adjoints of $d_A$ and $\nabla_A$
with respect to the standard (non-weighted) inner products.
For any smooth $\eta\in \Omega^+(\ad E)$ with $\norm{\eta}_{L^\infty(X)}<\infty$ there uniquely exists 
a smooth $\phi\in \Omega^+(\ad E)$ satisfying $\norm{\phi}_{L^\infty(X)}<\infty$ and 
$d_A^+d_A^*\phi = \eta$. We denote this $\phi$ by $(d_A^+d_A^*)^{-1}\eta$.
(See Section \ref{section: instanton approximation} and \cite[Appendix]{Matsuo--Tsukamoto}.)
We need to study the behavior of $(d_A^+ d_A^*)^{-1}$ under the weighted norms.

\begin{lemma}  \label{lemma: construction of P_A}
For any $k\geq 0$ and any compactly supported smooth $\eta\in \Omega^+(\ad E)$
\[ \nnorm{(d_A^+d_A^*)^{-1}\eta}_{k+2} \lesssim_{k,D} \nnorm{\eta}_{k}.\]
So we can uniquely extend the operator $(d_A^+ d_A^*)^{-1}$ to a bounded linear map from 
$L^{2,W}_{k}(\Omega^+(\ad E))$ to $L^{2,W}_{k+2}(\Omega^+(\ad E))$.
We set 
\[ P_A := d_A^*(d_A^+ d_A^*)^{-1}: L^{2,W}_{k}(\Omega^+(\ad E))\to L^{2,W}_{k+1}(\Omega^1(\ad E)).\]
This satisfies $\norm{P_A \eta}_{k+1}\lesssim_{k,D} \nnorm{\eta}_{k}$.
\end{lemma}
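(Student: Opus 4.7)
My strategy is to first reduce to the scalar Laplacian-plus-mass via the Weitzenb\"ock formula, then exploit the Green kernel representation combined with the exponential weight $W=W_A$ to get the base case $\nnorm{\phi}\lesssim_D\nnorm{\eta}$, and finally bootstrap to higher derivatives by interior elliptic regularity.

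Since $A$ is ASD we have $F_A^+=0$, so (\ref{eq: Weitzenbock formula}) reduces to $d_A^+d_A^*\phi=\tfrac{1}{2}(\nabla_A^*\nabla_A+2)\phi$. Hence $\phi := (d_A^+d_A^*)^{-1}\eta$ satisfies $(\nabla_A^*\nabla_A+2)\phi=2\eta$, and Kato's inequality combined with the maximum principle for $\Delta+2$ yields the pointwise bound $|\phi(x)|\lesssim \int_X g(x,y)|\eta(y)|\,d\vol(y)$. Since $W$ is a smoothing of $e^{\alpha|t-G'(A)|}$ with $\alpha<1$, we have $W(x)\lesssim e^{\alpha\dist(x,y)}W(y)$, and combined with (\ref{eq: exponential decay of Green kernel}) this gives
\[
  W(x)|\phi(x)| \lesssim \int_X K(x,y)\,W(y)|\eta(y)|\,d\vol(y), \qquad K(x,y):=e^{\alpha\dist(x,y)}g(x,y),
\]
where $K$ has the usual $\dist(x,y)^{-2}$ singularity on the diagonal and decays like $e^{-(\sqrt{2}-\alpha)\dist(x,y)}$ at infinity.

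The crucial step is to show convolution with $K$ is bounded in the triple-bar norm. Setting $(Tf)(x):=\int K(x,y)f(y)\,d\vol(y)$ and $f:=W|\eta|$, Cauchy-Schwarz in the form $(Tf)(x)^2\le\bigl(\int K(x,y)\,d\vol(y)\bigr)\bigl(\int K(x,y)f(y)^2\,d\vol(y)\bigr)$ plus Fubini gives
\[
  \norm{Tf}_{L^2(I_j)}^2 \lesssim \int_X \kappa_j(y)\,f(y)^2\,d\vol(y),\qquad \kappa_j(y):=\int_{I_j}K(x,y)\,d\vol(x),
\]
where $I_j := (n_j,n_{j+1})\times S^3$. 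The function $\kappa_j$ is uniformly bounded in $y$, and for $y$ with $\dist(y,I_j)\ge 1$ the exponential decay of $K$ gives $\kappa_j(y)\lesssim e^{-(\sqrt{2}-\alpha)\dist(y,I_j)}$. Decomposing $\int_X\kappa_j f^2=\sum_i\int_{I_i}\kappa_j f^2$, we have $\sup_{y\in I_i}\kappa_j(y)\lesssim e^{-c(|i-j|-1)_+}$ because $G'(A)\subset\mathbb{Z}$ forces every intermediate interval to have length $\ge 1$. The resulting geometric series in $|i-j|$ sums to a constant independent of $G=|G'(A)|$, so $\norm{Tf}_{L^2(I_j)}^2\lesssim_D \nnorm{\eta}^2$ uniformly in $j$, giving $\nnorm{\phi}\lesssim_D \nnorm{\eta}$.

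With this base estimate in hand, the higher-order bound follows from interior elliptic regularity: on any ball $B\subset X$ of radius $\sim 1$, the operator $\nabla_A^*\nabla_A+2$ satisfies $\norm{\phi}_{L^2_{k+2,A}(B)}\lesssim_{k,D}\norm{\phi}_{L^2(2B)}+\norm{\eta}_{L^2_{k,A}(2B)}$, with constants depending only on $k$ and $D$ (via an exponential-gauge trivialization that controls the connection matrix by $\norm{F_A}_{C^k_A}\lesssim_k D$). Since $|dW|\lesssim W$, the weight is essentially constant on unit balls and can be pulled through the estimate; summing over a bounded cover of $I_j$ and absorbing a harmless enlargement into adjacent intervals yields $\nnorm{\phi}_{k+2}\lesssim_{k,D}\nnorm{\phi}+\nnorm{\eta}_k\lesssim_{k,D}\nnorm{\eta}_k$. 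The bound for $P_A=d_A^*\circ(d_A^+d_A^*)^{-1}$ then follows since $d_A^*$ is first-order, and the extension to general $\eta\in L^{2,W}_k$ is by density. The main obstacle is the Schur-type local-to-local step in the preceding paragraph: the naive $L^2(X)\to L^2(X)$ boundedness of $T$ would introduce a factor $\sqrt{G}$ that cannot be controlled in terms of $D$ alone, so the explicit decomposition over intervals combined with the decay of $K$ and the discreteness of $G'(A)$ are exactly what is needed to avoid this degradation.
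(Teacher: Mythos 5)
Your proof is correct and follows essentially the same route as the paper's: Green kernel representation via the Weitzenb\"ock formula, conjugation by the weight $W$ to form the kernel $h(x,y)=W(x)W(y)^{-1}g(x,y)$ (your $K$), the Cauchy--Schwarz step $W^2|\phi|^2\lesssim\int h\,W^2|\eta|^2$, and then a decomposition over the intervals $I_j$ using the exponential decay of the kernel to sum a geometric series that is uniform in $j$. The only substantive additions are your explicit remark about the $\sqrt{G}$ degradation one would incur from a global $L^2$ Schur bound --- a useful motivation the paper leaves implicit --- and spelling out the elliptic bootstrapping to higher $k$, which the paper compresses into ``it is enough to prove $\nnorm{\phi}\lesssim\nnorm{\eta}$.''
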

\begin{proof}
Set $\phi = (d_A^+ d_A^*)^{-1}\eta$. It is enough to prove 
$\nnorm{\phi}\lesssim \nnorm{\eta}$. 
By the Green kernel estimate (\ref{eq: Green kernel estimate})
\[ |\phi(x)|\lesssim \int_X g(x,y)|\eta(y)|d\vol(y).\]
We have $\int_X g(x,y)d\vol(y)\lesssim 1$ (uniformly in $x$) and 
$g(x,y)\lesssim e^{-\sqrt{2}\, \dist(x,y)}$ for $\dist(x,y)>1$.
Set $h(x,y) = W(x)W(y)^{-1}g(x,y)$. 
\[ W(x)|\phi(x)|\lesssim \int_X h(x,y)W(y)|\eta(y)|d\vol(y).\]
Since $e^{\alpha|t-G'(A)|}\lesssim W(t)\lesssim e^{\alpha|t-G'(A)|}$ 
\[ W(x)W(y)^{-1} \lesssim e^{\alpha\, \dist(x,y)}.\]
Hence (noting $\alpha<1<\sqrt{2}$)
\[ \int_X h(x,y)d\vol(y) \lesssim 1 \> (\text{uniformly in $x$}), \quad 
   h(x,y) \lesssim e^{(\alpha-\sqrt{2})\dist(x,y)} \quad (\dist(x,y)>1).\]
From the former condition 
\[ W(x)^2|\phi(x)|^2 \lesssim \int_X h(x,y)W(y)^2|\eta(y)|^2d\vol(y).\]

We denote by $t$ and $s$ the $\mathbb{R}$-coordinates of $x, y\in \mathbb{R}\times S^3$ respectively.
\begin{equation*}
    \begin{split}
   \int_{n_i<t<n_{i+1}} & W(x)^2 |\phi(x)|^2 d\vol(x) 
   \lesssim \int_X \left(\int_{n_i< t <n_{i+1}}h(x,y)d\vol(x)\right) W(y)^2|\eta(y)|^2 d\vol(y) \\
   =& \underbrace{\int_{n_i-1\leq s\leq n_{i+1}+1}\left(\int_{n_i<t<n_{i+1}}h(x,y)d\vol(x)\right)W(y)^2|\eta(y)|^2 d\vol(y)}_{(I)} \\
   &+ \underbrace{\int_{\{s< n_i-1\}\cup \{s> n_{i+1}+1\}}
    \left(\int_{n_i<t<n_{i+1}}h(x,y)d\vol(x)\right)W(y)^2|\eta(y)|^2 d\vol(y)}_{(II)}.
   \end{split}
\end{equation*}
We have $\int_{n_i<t<n_{i+1}}h(x,y)d\vol(x)\lesssim 1$. 
So the term $(I)$ is $\lesssim \nnorm{\eta}$. When $s < n_i-1$ or $s > n_{i+1}+1$, 
\[  \int_{n_i<t<n_{i+1}}h(x,y)d\vol(x) \lesssim \int_{n_i}^{n_{i+1}}e^{(\alpha-\sqrt{2})|t-s|}dt
    \lesssim \max\left(e^{(\alpha-\sqrt{2})|s-n_i|}, e^{(\alpha-\sqrt{2})|s-n_{i+1}|}\right). \]
Then the term $(II)$ is also $\lesssim \nnorm{\eta}$.
Thus we conclude $\nnorm{\phi}\lesssim \nnorm{\eta}$.
\end{proof}

We define $H^{1,W}_A$ as the space of $a\in \Omega^1(\ad E)$ satisfying 
$d_A^{*,W}a=d_A^+a=0$ and $\nnorm{a}<\infty$.
All the norms $\nnorm{\cdot}_{k,A}$ $(k\geq 0)$ are equivalent over $H^{1,W}_A$ by the elliptic regularity.

\begin{lemma} \label{lemma: index formula}
\[ \dim H^{1,W}_A = 8c_2(A)+3, \quad c_2(A) := \frac{1}{8\pi^2}\int_X |F_A|^2 d\vol.\]
\end{lemma}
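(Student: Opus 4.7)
The strategy is to interpret $H^{1,W}_A$ as the kernel of the rolled-up elliptic operator
\[ \mathcal{D}_A := d_A^+ \oplus d_A^{*,W} : L^{2,W}_1(\Omega^1(\ad E)) \to L^{2,W}(\Omega^+(\ad E)) \oplus L^{2,W}(\Omega^0(\ad E)), \]
to show that $\mathcal{D}_A$ is Fredholm with vanishing cokernel, and to compute its index by transferring the Atiyah--Hitchin--Singer formula from $S^4$.

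First I would verify that $\mathcal{D}_A$ is Fredholm in these weighted spaces. Since $A$ is a non-flat instanton of bounded $L^\infty$ curvature, Donaldson's exponential decay theorem \cite[Theorem 4.2]{Donaldson} implies that $A$ converges exponentially to the unique trivial flat connection at each end of $X = \mathbb{R}\times S^3$. Conjugating $\mathcal{D}_A$ by $W_A$ yields an asymptotically translation-invariant elliptic operator whose cross-sectional operator on $S^3$, at the trivial flat connection, has kernel precisely the three-dimensional space of parallel $\ad E$-valued $0$-forms and nonzero spectrum bounded below by $\sqrt{2}$. Since $\alpha\in(0,1)$ lies strictly between $0$ and $\sqrt{2}$, standard Lockhart--McOwen Fredholm theory applies, and a weighted Hodge decomposition identifies $H^{1,W}_A$ with $\ker \mathcal{D}_A$.

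Next I would eliminate the other two cohomologies. For $H^{0,W}_A$: any weighted-$L^2$ section $u$ with $d_A u = 0$ would force $A$ to be reducible, contradicting Lemma \ref{lemma: non-flat implies irreducible}. For $H^{2,W}_A$: take $\phi \in \Omega^+(\ad E)$ with $\nnorm{\phi}<\infty$ and $(d_A^+)^{*,W}\phi = 0$. Testing against $d_A^*\phi$ in the weighted inner product gives $\int W^2 \langle d_A^+d_A^*\phi, \phi\rangle d\vol = 0$, and the Weitzenb\"ock identity \eqref{eq: Weitzenbock formula} together with $F_A^+=0$, $W^+(X) = 0$ and $S(X) = 6$ reduces this to
\[ \int_X W^2|\nabla_A\phi|^2 d\vol + 2\int_X W\langle \nabla_A\phi, dW\otimes\phi\rangle d\vol + 2\int_X W^2|\phi|^2 d\vol = 0. \]
The cross term is bounded by $\int W^2|\nabla_A\phi|^2 + c\alpha^2\int W^2|\phi|^2$ via $|dW|\lesssim\alpha W$ and Cauchy--Schwarz, and is absorbed by the positive terms as long as $\alpha < 1$, forcing $\phi\equiv 0$.

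Finally, the index. Through the conformal equivalence $X\cong S^4\setminus\{N,S\}$, Uhlenbeck's removable singularities theorem extends $A$ to a smooth irreducible ASD connection $\tilde A$ on $S^4$ with the same $c_2$. On $S^4$ the Atiyah--Hitchin--Singer formula \cite[Theorem 6.1]{Atiyah--Hitchin--Singer} gives $\mathrm{ind}(d_{\tilde A}^+\oplus d_{\tilde A}^*) = 8c_2(A) - 3$; passing to the weighted $L^2$ theory on the cylinder contributes a boundary correction equal to the sum, over the two ends, of the dimension of the cross-sectional kernel, namely $2\cdot\dim\mathfrak{su}(2) = 6$. This yields $\mathrm{ind}(\mathcal{D}_A) = 8c_2(A) + 3$, and combined with the vanishing of $H^0$ and $H^2$ we conclude $\dim H^{1,W}_A = 8c_2(A) + 3$. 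The main obstacle is this $+6$ accounting: because $W_A$ is only a smoothing of $e^{\alpha|t-G'(A)|}$, one must first homotope $W_A$ at infinity to the clean exponential $e^{\alpha|t|}$ while keeping $\mathcal{D}_A$ Fredholm (so the index is preserved), and then invoke the Lockhart--McOwen/APS index-jump formula. Geometrically the six extra dimensions correspond to the asymptotic $SU(2)$-gauge freedom at the two ends, generated by $\mathfrak{su}(2)$-valued $0$-forms which are bounded at infinity but fail to lie in $L^{2,W}$.
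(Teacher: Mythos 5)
Your broad strategy agrees with the paper's: $H^{1,W}_A$ is identified with $\ker(\mathcal{D}_A)$ for $\mathcal{D}_A = d_A^{*,W}\oplus d_A^+$, the cokernel is shown to vanish, and $\dim H^{1,W}_A$ is read off as the Fredholm index. But the execution differs in two significant ways, and one of them has a real gap.

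First, the vanishing of the cokernel. You treat $H^0$ and $H^2$ as separate pieces: $H^0=0$ from irreducibility, $H^2=0$ from the Weitzenb\"ock identity with $F_A^+=0$, $W^+=0$, $S=6$, absorbing the $dW$-cross term using $|dW|\lesssim\alpha W$ and $\alpha<1$. This is a valid plan, but note that the cokernel equation is $d_A u + (d_A^+)^{*,W}\phi=0$, not two decoupled equations; you need to first observe that applying $d_A^{*,W}$ kills the $\phi$-term (since $d_A^{*,W}(d_A^+)^{*,W}=((d_A^+)d_A)^{*,W}$ and $F_A^+=0$), giving $d_A^{*,W}d_A u=0$ and hence $u=0$, before the $\phi$-argument applies. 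Also, your integration-by-parts on the non-compact cylinder needs a justification that the boundary terms vanish for $\phi$ with finite weighted $L^2$ norm. The paper avoids all of this: it shows surjectivity of $\mathcal{D}_A$ \emph{constructively}, solving $\mathcal{D}_A(d_A v+P_A\eta)=(u,\eta)$ by first inverting $d_A^{*,W}d_A$ (which is an isomorphism by Lemma~\ref{lemma: estimate on Omega^0}(2)) and using $d_A^+P_A=1$. This is shorter and re-uses the operator $P_A$ that was already built in Lemma~\ref{lemma: construction of P_A}.

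Second, and this is the genuine gap: the index computation. The paper simply quotes Donaldson, \emph{Floer homology groups in Yang--Mills theory}, Proposition~3.19, which supplies $\mathrm{index}(\mathcal{D}_A)=8c_2(A)+3$ for exactly this weighted cylinder setup. You instead try to re-derive it by conformal compactification to $S^4$, Atiyah--Hitchin--Singer giving $8c_2-3$ there, and a claimed ``$+6$'' boundary correction from $2\cdot\dim\mathfrak{su}(2)$. This is a plausible story to tell at the blackboard, but the index-jump bookkeeping is not free: the passage from the $S^4$ index (which uses removable-singularity extension) to the weighted-cylinder index involves a comparison of the conformal weight picking up on $S^4\setminus\{N,S\}$ with the weight $e^{\alpha|t|}$, a verification that the slice operator at the trivial flat connection has a 3-dimensional kernel with a suitable spectral gap exceeding $\alpha$ (your claim of $\sqrt{2}$ for the gap is not the right quantity here --- $\sqrt{2}$ comes from the scalar operator $\Delta+2$, not from the gauge-theoretic slice operator), and a homotopy of the weight $W_A$ to a model weight that keeps the operator Fredholm. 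None of these is carried out, and the sign/count of the jump is exactly the kind of thing that goes wrong without doing the spectral-flow accounting carefully. The paper sidesteps this entirely by citation. If you want a self-contained index argument you would need to actually invoke the relative index / excision theorems (or Lockhart--McOwen with a computed spectrum of the slice operator), not just assert the $+6$.

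In short: your overall architecture matches, your cokernel-vanishing route is a more laborious but workable alternative to the paper's constructive surjectivity, and your index derivation is a sketch with a real gap where the paper has a clean citation.
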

\begin{proof}
We set $\mathcal{D}_A =d_A^{*,W}+d_A^+: L^{2,W}_{1}(\Omega^1(\ad E))\to L^{2,W}(\Omega^0(\ad E)\oplus \Omega^+(\ad E))$.
$H^{1,W}_A$ is the kernel of $\mathcal{D}_A$. 
We will show that $\mathcal{D}_A$ is surjective.
The map 
\[ d_A^{*,W}d_A:L^{2,W}_{2}(\Omega^0(\ad E))\to L^{2,W}(\Omega^0(\ad E)) \]
is injective and has a closed range by Lemma \ref{lemma: estimate on Omega^0} (2).
So it is an isomorphism by the principle of orthogonal projection.
(See the proof of Lemma \ref{lemma: Hodge decomposition} (2) below.)
Let $(u,\eta)\in L^{2,W}(\Omega^0(\ad E)\oplus \Omega^+(\ad E))$.
We can find $v\in L^{2,W}_{2}(\Omega^0(\ad E))$ satisfying $d_A^{*,W}d_A v=u-d_A^{*,W}P_A\eta$.
By $d_A^+P_A=1$
\[ \mathcal{D}_A(d_A v+P_A\eta) = (d_A^{*,W}d_A v + d_A^{*,W}P_A\eta, \eta) = (u,\eta).\]
Thus $\mathcal{D}_A$ is surjective. 
Therefore $\dim H^{1,W}_A = \dim \mathrm{Ker}(\mathcal{D}_A)$ is equal to the index of $\mathcal{D}_A$.
The calculation of $\mathrm{index}(\mathcal{D}_A)$ is standard, and we get 
$\mathrm{index}(\mathcal{D}_A) = 8c_2(A)+3$ by 
Donaldson \cite[Proposition 3.19]{Donaldson}.
\end{proof}

\begin{lemma} \label{lemma: Hodge decomposition}
(1) Let $k\geq 1$. For any $u\in L^{2,W}_{k+1}(\Omega^0(\ad E))$, $a\in H^{1,W}_A$ and $\eta\in L^{2,W}_{k-1}(\Omega^+(\ad E))$
\[ \nnorm{d_A u}_{k} + \nnorm{a} + \nnorm{\eta}_{k-1} \lesssim_{k,D} \nnorm{d_A u +a + P_A\eta}_{k}.\]

\noindent 
(2) Let $k\geq 1$. We define a map 
\[ \Phi: L^{2,W}_{k+1}(\Omega^0(\ad E))\oplus H^{1,W}_A\oplus L^{2,W}_{k-1}(\Omega^+(\ad E))
   \to L^{2,W}_{k}(\Omega^1(\ad E)) \]
by $\Phi(u,a,\eta) = -d_A u + a + P_A\eta$. 
Then $\Phi$ is an isomorphism.
\end{lemma}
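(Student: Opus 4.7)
The plan is to prove (1) directly from Lemma \ref{lemma: Coulomb gauge} and Lemma \ref{lemma: construction of P_A}, then deduce injectivity of $\Phi$ from (1), and finally establish surjectivity via a weighted Hodge decomposition that uses the invertibility of $d_A^{*,W} d_A$ already implicit in the proof of Lemma \ref{lemma: index formula}.

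For (1), write $\beta := d_A u + a + P_A \eta$ (the sign in the statement is irrelevant: replace $u$ by $-u$). Applying $d_A^+$ kills both $d_A u$ (since $d_A^+ d_A = F_A^+ \cdot = 0$ because $A$ is ASD) and $a$ (by harmonicity), while $d_A^+ P_A = 1$ by construction of $P_A$. Hence $d_A^+ \beta = \eta$, giving $\nnorm{\eta}_{k-1} \lesssim_{k,D} \nnorm{\beta}_k$. Lemma \ref{lemma: construction of P_A} then yields $\nnorm{P_A \eta}_k \lesssim_{k,D} \nnorm{\eta}_{k-1}$, so $\nnorm{d_A u + a}_k \lesssim_{k,D} \nnorm{\beta}_k$. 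Since $d_A^{*,W} a = 0$, Lemma \ref{lemma: Coulomb gauge} (2) applies to produce $\nnorm{d_A u}_k + \nnorm{a}_k \lesssim_{k,D} \nnorm{d_A u + a}_k$, and the trivial bound $\nnorm{a} \leq \nnorm{a}_k$ finishes (1).

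Injectivity of $\Phi$ is then immediate: if $\Phi(u, a, \eta) = 0$, part (1) forces $\eta = 0$, $a = 0$, and $d_A u = 0$, whereupon the estimate (\ref{eq: estimate on Omega^0}) from the proof of Lemma \ref{lemma: estimate on Omega^0} forces $u = 0$. For surjectivity, given $\beta \in L^{2,W}_{k}(\Omega^1(\ad E))$, set $\eta := d_A^+ \beta \in L^{2,W}_{k-1}$ and $\gamma := \beta - P_A \eta \in L^{2,W}_{k}$, noting that $d_A^+ \gamma = \eta - \eta = 0$. It now suffices to split $\gamma = -d_A u + a$ with $d_A^{*,W} a = 0$, which reduces to solving the weighted Poisson equation
\begin{equation*}
  d_A^{*,W} d_A u = d_A^{*,W} \gamma
\end{equation*}
and setting $a := \gamma + d_A u$. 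Then $d_A^{*,W} a = 0$ by construction, and $d_A^+ a = d_A^+ \gamma + F_A^+ \cdot u = 0$ since $A$ is ASD, so $a \in H^{1,W}_A$, while a direct check gives $\Phi(-u, a, \eta) = \beta$.

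The main technical point, and the only place a nontrivial argument is needed, is the solvability of this Poisson equation in the weighted scale. This is precisely the invertibility of $d_A^{*,W} d_A : L^{2,W}_{2}(\Omega^0(\ad E)) \to L^{2,W}(\Omega^0(\ad E))$ that was already exploited inside the proof of Lemma \ref{lemma: index formula}: Lemma \ref{lemma: estimate on Omega^0} (2) delivers injectivity and closed range, and self-adjointness of $d_A^{*,W} d_A$ with respect to the weighted inner product $\langle \cdot,\cdot \rangle_{L^{2,W}} := \int_X W^2 \langle \cdot,\cdot \rangle\, d\vol$ then forces surjectivity by the principle of orthogonal projection. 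Upgrading $u \in L^{2,W}_2$ to $u \in L^{2,W}_{k+1}$ when the right-hand side lies in $L^{2,W}_{k-1}$ is a routine elliptic bootstrap of the same flavor as in Lemma \ref{lemma: Coulomb gauge} (2), so no essentially new estimates beyond those already established are required.
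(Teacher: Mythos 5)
Your argument is correct and essentially coincides with the paper's: part (1) uses $d_A^+$ to isolate $\eta$, then Lemma \ref{lemma: construction of P_A} and Lemma \ref{lemma: Coulomb gauge} (2); for (2), your weighted Poisson equation is exactly the orthogonal decomposition the paper performs directly, with closedness of $d_A(L^{2,W}_{1}(\Omega^0(\ad E)))$ in $L^{2,W}$ supplied by Lemma \ref{lemma: estimate on Omega^0} (2), followed by the same elliptic bootstrap to reach $L^{2,W}_{k+1}$. (One harmless sign slip: with the Poisson equation $d_A^{*,W}d_A u = d_A^{*,W}\gamma$ as written you should set $a := \gamma - d_A u$, not $\gamma + d_A u$, so that $d_A^{*,W}a = 0$ and $\Phi(-u,a,\eta) = d_A u + a + P_A\eta = \gamma + P_A\eta = \beta$.)
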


\begin{proof}
(1) Set $b=d_A u+a+P_A\eta$. $d_A^+ b=\eta$. 
So $\nnorm{\eta}_{k-1}\lesssim_{k,D}\nnorm{b}_{k}$.
By Lemma \ref{lemma: Coulomb gauge} (2)
\begin{equation*}
  \nnorm{d_A u}_{k}+\nnorm{a} \lesssim_{k,D} \nnorm{d_A u+a}_{k} \leq \nnorm{b}_{k} + \nnorm{P_A\eta}_{k}
  \lesssim_{k,D} \nnorm{b}_{k} + \nnorm{\eta}_{k-1} \lesssim_{k,D} \nnorm{b}_{k}.  
\end{equation*}

(2) It is enough to prove that $\Phi$ is surjective. 
Take $b\in L^{2,W}_{k}(\Omega^1(\ad E))$. Set $\eta = d_A^+b$ and $b'=b-P_A\eta$. 
This satisfies $d_A^+b'=0$.
By Lemma \ref{lemma: estimate on Omega^0} (2), the space $d_A(L^{2,W}_{1}(\Omega^0(\ad E)))$ is closed 
in $L^{2,W}(\Omega^1(\ad E))$. 
So let $b'=-d_A u + a$ ($u\in L^{2,W}_{1}(\Omega^0(\ad E))$)
be the orthogonal decomposition with respect to the weighted inner product:
\[ \int_X W^2\langle d_A v, a\rangle d\vol = 0 \quad (\forall v\in L^{2,W}_{1}(\Omega^0(\ad E))).\]
Then $d_A^{*,W}a=0$. Moreover 
$d_A^+a=d_A^+(b'+d_A u) = 0$. Hence $a\in H^{1,W}_A$.
We have $d_A u = a-b'\in L^{2,W}_{k}$. So $u\in L^{2,W}_{k+1}$.
$b=-d_A u+a+P_A\eta = \Phi(u,a,\eta)$.
Thus $\Phi$ is surjective. 
\end{proof}

Let $a\in H^{1,W}_A$. The connection $A+a$ is an approximate solution of the ASD equation.
In the next lemma, we perturb it and construct a genuine solution.

\begin{lemma} \label{lemma: solving ASD equation}
We can choose $r_1=r_1(D)>0$ so that the following statements hold.

\noindent 
(1) For any $a\in H^{1,W}_A$ with $\nnorm{a}\leq r_1$ there uniquely exists $\eta\in L^{2,W}_{1}(\Omega^+(\ad E))$
satisfying 
\[  \quad F^+(A+a+P_A\eta)=0, \quad  \nnorm{\eta}_{1}\leq r_1.\]
We denote this $\eta$ by $\eta_a$ and set $\tilde{a} = a+P_A\eta_a$.

\noindent 
(2) For any $a,b\in H^{1,W}_A$ with $\nnorm{a}, \nnorm{b}\leq r_1$ 
\[ \norm{\tilde{a}-\tilde{b}}_{L^\infty(X)} \lesssim_D \nnorm{a-b}.\]
\end{lemma}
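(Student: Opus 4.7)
The ASD equation $F^+(A+a+P_A\eta)=0$ unfolds, using $F_A^+=0$ (since $A$ is an instanton), $d_A^+ a=0$ (since $a\in H^{1,W}_A$), and $d_A^+ P_A\eta=\eta$, into the fixed-point equation
\[ \eta \;=\; -\bigl((a+P_A\eta)\wedge (a+P_A\eta)\bigr)^+ \;=:\; \Psi_a(\eta). \]
My plan is to apply Banach's fixed point theorem to $\Psi_a$ on a small ball in $L^{2,W}_{1}(\Omega^+(\ad E))$ to obtain $\eta_a$, and then deduce part~(2) by differencing the equations for $a$ and $b$ and estimating via the contraction constant.

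The analytic crux is a multiplication estimate in the weighted norms, namely
\[ \nnorm{u\wedge v}_{1} \;\lesssim_D\; \nnorm{u}_{2}\,\nnorm{v}_{2} \qquad \bigl(u,v\in L^{2,W}_{2}(\Omega^\bullet(\ad E))\bigr). \]
This follows from the pointwise bound $W|u\wedge v|\leq (W|u|)\,|v|$ (valid because $W\geq 1$), the Sobolev embedding $L^2_{2,A}\hookrightarrow L^\infty$ on each unit cylinder $(t,t+1)\times S^3$ with constants uniform in $t$, and the Leibniz rule combined with $|W^{(k)}|\lesssim_k W$ for the derivative terms. Combining it with Lemma~\ref{lemma: construction of P_A} ($\nnorm{P_A\eta}_{2}\lesssim_D\nnorm{\eta}_{1}$) and the elliptic bound $\nnorm{a}_{2}\lesssim_D\nnorm{a}$ on $H^{1,W}_A$ (immediate from $(d_A^{*,W}+d_A^+)a=0$ and Lemma~\ref{lemma: Coulomb gauge}), one gets
\[ \nnorm{\Psi_a(\eta)}_{1} \;\lesssim_D\; (\nnorm{a}+\nnorm{\eta}_{1})^2, \qquad \nnorm{\Psi_a(\eta)-\Psi_a(\eta')}_{1} \;\lesssim_D\; (\nnorm{a}+\nnorm{\eta}_{1}+\nnorm{\eta'}_{1})\,\nnorm{\eta-\eta'}_{1}. \]
Choosing $r_1=r_1(D)>0$ so small that the implicit constants times $r_1$ are less than $1/2$, the map $\Psi_a$ contracts the closed $r_1$-ball of $L^{2,W}_{1}(\Omega^+(\ad E))$ into itself whenever $\nnorm{a}\leq r_1$, producing a unique fixed point $\eta_a$; a standard elliptic bootstrap then shows $\eta_a$ is smooth.

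For part~(2), subtract the two fixed-point equations:
\[ \eta_a-\eta_b \;=\; \Psi_a(\eta_a)-\Psi_b(\eta_b), \]
and expand the quadratic difference. The same multiplication bound yields
\[ \nnorm{\eta_a-\eta_b}_{1} \;\leq\; C(D)\,r_1\,\bigl(\nnorm{a-b}+\nnorm{\eta_a-\eta_b}_{1}\bigr), \]
whereupon absorbing the last term on the left gives $\nnorm{\eta_a-\eta_b}_{1}\lesssim_D\nnorm{a-b}$. One more pass through the fixed-point equation upgrades this to $\nnorm{\eta_a-\eta_b}_{2}\lesssim_D\nnorm{a-b}$, and then $\nnorm{P_A(\eta_a-\eta_b)}_{3}\lesssim_D\nnorm{a-b}$ by Lemma~\ref{lemma: construction of P_A}. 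Since $W\geq 1$, the Sobolev embedding on unit cylinders gives $\norm{\cdot}_{L^\infty(X)}\lesssim \nnorm{\cdot}_{3}$, and on the finite-dimensional space $H^{1,W}_A$ the elliptic regularity of $d_A^{*,W}+d_A^+$ yields $\norm{a-b}_{L^\infty(X)}\lesssim_D\nnorm{a-b}$, so
\[ \norm{\tilde a-\tilde b}_{L^\infty(X)} \;\leq\; \norm{a-b}_{L^\infty(X)}+\norm{P_A(\eta_a-\eta_b)}_{L^\infty(X)} \;\lesssim_D\; \nnorm{a-b}. \]

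The main obstacle is bookkeeping: the intervals $(n_j,n_{j+1})$ in the definition of $\nnorm{\cdot}$ can be arbitrarily long and numerous (their number grows with $c_2(A)$), so every multiplication, Sobolev, and elliptic estimate must be secured with constants depending only on $D$, never on $A$ or on $G'(A)$. The pointwise device $W|u\wedge v|\leq (Wu)|v|$ together with $|W^{(k)}|\lesssim_k W$ is what reduces the weighted analysis back to uniform unweighted Sobolev calculus on unit cylinders, and is the reason the constants depend only on $D$.
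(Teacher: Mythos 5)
Your approach is the same as the paper's: rewrite $F^+(A+a+P_A\eta)=0$ as the fixed-point equation $\eta=-\{(a+P_A\eta)^2\}^+$ using $F_A^+=0$, $d_A^+a=0$, $d_A^+P_A=\mathrm{id}$, run Banach's contraction principle on a small ball in $L^{2,W}_{1}$, and obtain part (2) by subtracting the two fixed-point identities and absorbing. The paper's proof invokes the bilinear map $L^{2,W}_{2}\times L^{2,W}_{2}\to L^{2,W}_{1}$ as a black box; you try to justify it, and that is where there is a genuine gap.

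The gap: you claim the multiplication bound follows from ``the Sobolev embedding $L^2_{2,A}\hookrightarrow L^\infty$ on each unit cylinder $(t,t+1)\times S^3$.'' This embedding is \emph{false}. The unit cylinder is $4$-dimensional, and $W^{2,2}\hookrightarrow L^\infty$ requires $kp>n$, i.e.\ $4>4$; at the borderline $kp=n$ one only lands in $\mathrm{BMO}$/exponential Orlicz, not $L^\infty$. (Indeed the paper elsewhere is careful to use three derivatives, e.g.\ $L^2_3\hookrightarrow L^\infty$ in Lemma \ref{lemma: estimate on Omega^0}, and as many as ten in the definition of $U_\delta(A,T)$.) Your pointwise device $W|u\wedge v|\leq(W|u|)\,|v|$ therefore does not let you close the estimate, because $|v|$ cannot be bounded in $L^\infty$ by $\nnorm{v}_2$. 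The bilinear estimate $\nnorm{u\wedge v}_1\lesssim\nnorm{u}_2\nnorm{v}_2$ is nevertheless true, but the correct mechanism is H\"older plus the subcritical embeddings: on each unit cylinder $L^2_1\hookrightarrow L^4$ and $L^2_2\hookrightarrow L^q$ for every finite $q$, so that (say) $\nabla u\cdot v\in L^2$ via $L^4\cdot L^4$, and similarly for the other Leibniz terms, with the weight handled by $W|u\wedge v|\leq W|u|\cdot|v|$ and $|W^{(k)}|\lesssim_k W$ exactly as you intended. Once this is repaired, the rest of your argument (contraction constant $\lesssim_D r_1$, differencing to get $\nnorm{\eta_a-\eta_b}_1\lesssim_D\nnorm{a-b}$, one bootstrap pass, and then $L^2_3\hookrightarrow L^\infty$ plus $W\gtrsim1$ for the $L^\infty$ conclusion) matches the paper's ``the rest is bootstrapping'' step and is fine. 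A smaller imprecision: the bound $\nnorm{a}_2\lesssim_D\nnorm{a}$ on $H^{1,W}_A$ is not ``immediate from Lemma \ref{lemma: Coulomb gauge}'' (setting $u=0$ there gives a tautology); it comes from interior elliptic regularity for $d_A^{*,W}+d_A^+$ with constants controlled by $D$, which the paper states just before Lemma \ref{lemma: index formula}.
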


\begin{proof}
(1) $F^+(A+a+P_A\eta)=\eta+\{(a+P_A\eta)^2\}^+$.
Set $Q(\eta) = -\{(a+P_A\eta)^2\}^+$ for $\eta\in L^{2,W}_{1}(\Omega^+(\ad E))$.
If $\eta_1,\eta_2\in  L^{2,W}_{1}(\Omega^+(\ad E))$ satisfy 
$\nnorm{\eta_1}_{1}, \nnorm{\eta_2}_{1}\leq r_1$, then 
\[ \nnorm{Q(\eta_1)}_{1} \lesssim_D r_1^2,\quad  \nnorm{Q(\eta_1)-Q(\eta_2)}_{1} \lesssim_D r_1 \nnorm{\eta_1-\eta_2}_{1}.\]
Here we have used $L^{2,W}_{2}\times L^{2,W}_{2} \rightarrow L^{2,W}_{1}$.
So if we choose $r_1>0$ sufficiently small, then $Q$ becomes a contraction map 
over $\{\eta\in L^{2,W}_{1}(\Omega^+(\ad E))|\nnorm{\eta}_{1}\leq r_1\}$.
Thus the statement (1) follows.

(2) We have $\eta_a = -\{(a+P_A\eta_a)^2\}^+$ and $\eta_b = -\{(b+P_A\eta_b)^2\}^+$. Hence 
\begin{equation*}
    \nnorm{\eta_a-\eta_b}_{1} \lesssim_D r_1\left(\nnorm{a-b}+\nnorm{\eta_a-\eta_b}_{1}\right).
\end{equation*}
If $r_1$ is sufficiently small, then $\nnorm{\eta_a-\eta_b}_{1}\lesssim_D \nnorm{a-b}$.
The rest of the argument is a bootstrapping.
\end{proof}

The next lemma is a conclusion of analytic arguments in this section.
This is a non-linear version of Lemma \ref{lemma: Hodge decomposition}.

\begin{lemma} \label{lemma: continuity method}
We can choose $r_0=r_0(D)>0$ so that the following statement holds.
For any connection $B$ on $E$ with $\nnorm{B-A}_{2}\leq r_0$ there exists 
$(u,a,\eta)\in L^{2,W}_{3}(\Omega^0(\ad E))\oplus H^{1,W}_A\oplus L^{2,W}_{1}(\Omega^+(\ad E))$ satisfying 
\[ B=e^u(A+a+P_A\eta), \quad \nnorm{d_A u}_{2}+\nnorm{a}+\nnorm{\eta}_{1} < r_1.\]
Here $r_1$ is the positive constant introduced in Lemma \ref{lemma: solving ASD equation}.
\end{lemma}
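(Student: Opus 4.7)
The plan is to set this up as an implicit function theorem problem whose linearization at the origin is precisely the isomorphism $\Phi$ supplied by Lemma~\ref{lemma: Hodge decomposition}(2). Concretely, I define a nonlinear map
\[
  F: L^{2,W}_{3}(\Omega^0(\ad E))\oplus H^{1,W}_A\oplus L^{2,W}_{1}(\Omega^+(\ad E))\to L^{2,W}_{2}(\Omega^1(\ad E))
\]
by $F(u,a,\eta):=e^{-u}(A+a+P_A\eta)-A$, where $e^{-u}$ acts as a gauge transformation. Lemma~\ref{lemma: estimate on Omega^0}(1) gives a pointwise bound $\norm{u}_{L^\infty(X)}\lesssim_D \nnorm{d_A u}_{2}$ that does not involve the weight at all, so $e^{-u}$ is well-defined as a bounded $SU(2)$-valued function as soon as $\nnorm{d_A u}_{2}$ is small. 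Expanding the gauge action yields
\[
  F(u,a,\eta)=-d_A u+a+P_A\eta+Q(u,a,\eta)=\Phi(u,a,\eta)+Q(u,a,\eta),
\]
where $Q$ collects all terms at least quadratic in $(u,a,\eta)$; schematically $Q$ is a sum of products of the form $u\cdot d_A u$, $u\cdot(a+P_A\eta)$, together with the higher-order tail of the exponential series.

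The next step is to prove a quadratic (and Lipschitz) estimate for $Q$ in the weighted norms, namely an inequality of the form
\[
  \nnorm{Q(u,a,\eta)}_{2}\lesssim_{D}\bigl(\nnorm{d_A u}_{2}+\nnorm{a}+\nnorm{\eta}_{1}\bigr)^{2}
\]
together with the analogous bound for the difference $Q(u_1,a_1,\eta_1)-Q(u_2,a_2,\eta_2)$. The point is that the factor coming from $u$ is controlled in $L^\infty$ without any weight, so every product that appears is of the form ``$L^\infty$ times $L^{2,W}_{k}$'' (or $L^{2,W}_{3}$ times $L^{2,W}_{2}$ via the Sobolev embedding $L^{2,W}_{3}\hookrightarrow L^\infty$), and the weight $W$ therefore enters on at most one factor at a time. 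These multiplications can be verified slice by slice on the unit cylinders $(n,n+1)\times S^3$ via ordinary Sobolev embeddings, and then summed according to the weight decomposition (\ref{eq: defintion of weighted norm}).

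Once $Q$ is controlled, I rewrite the equation $F(u,a,\eta)=B-A$ as the fixed-point problem
\[
  (u,a,\eta)=\Phi^{-1}\bigl((B-A)-Q(u,a,\eta)\bigr)
\]
on the closed ball of radius $r_1$ in $L^{2,W}_{3}(\Omega^0)\oplus H^{1,W}_A\oplus L^{2,W}_{1}(\Omega^+)$. Lemma~\ref{lemma: Hodge decomposition} bounds the operator norm of $\Phi^{-1}$ by a constant depending only on $D$; combined with the quadratic estimate on $Q$, choosing $r_0=r_0(D)$ sufficiently small makes the right-hand side a contraction on this ball, and the Banach fixed-point theorem supplies the unique small solution $(u,a,\eta)$ satisfying $\nnorm{d_A u}_{2}+\nnorm{a}+\nnorm{\eta}_{1}<r_1$.

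The main obstacle is to ensure that every constant in the chain of estimates depends only on $D$, and not on $A$ itself or on $c_2(A)$, since the lemma must later be applied uniformly over instantons $A$ with unbounded second Chern number. This is precisely why we must use the sharp Lemma~\ref{lemma: estimate on Omega^0}(1) rather than the weaker part (2), and why Lemmas~\ref{lemma: Coulomb gauge}, \ref{lemma: construction of P_A}, and \ref{lemma: Hodge decomposition}(1) were stated uniformly in $A$. The quantitative quadratic estimate on $Q$ in the weighted norms is the delicate point: one must pull the $u$-factor out in $L^\infty$ in every nonlinear term so that a product of two genuinely weighted quantities never appears, which is what lets the weight survive on the other factor without squaring.
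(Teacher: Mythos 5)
Your proof is correct, but it takes a genuinely different route from the paper. The paper uses a continuity method: it interpolates $A+tb$ for $t\in[0,1]$, shows the set $\mathcal T$ of solvable parameters is closed via an a priori bound together with weak compactness, and open via the implicit function theorem applied to the map $f(u,a,\eta)=e^u(A+a+P_A\eta)-A$, whose derivative at $(0,a_t,\eta_t)$ is a small perturbation of the isomorphism $\Phi$ from Lemma \ref{lemma: Hodge decomposition}(2). You instead recast $f(u,a,\eta)=B-A$ directly as a fixed-point equation $(u,a,\eta)=\Phi^{-1}\big((B-A)-Q(u,a,\eta)\big)$ and apply the Banach contraction principle. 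Both arguments hinge on exactly the same two ingredients: the invertibility of $\Phi$ with operator norm controlled by $D$ alone (Lemma \ref{lemma: Hodge decomposition}), and the crucial unweighted $L^\infty$ bound $\norm{u}_{L^\infty}\lesssim_D\nnorm{d_A u}_2$ from Lemma \ref{lemma: estimate on Omega^0}(1), which is what lets one pull the $u$-factor out of every nonlinear term so that the weight $W$ never appears on two factors simultaneously. Your route is more elementary in that it avoids the implicit function theorem and the weak-compactness step, at the cost of having to verify a full quadratic-plus-Lipschitz estimate for the remainder $Q$ on the entire ball rather than only an a priori bound on the solution set; in practice the Leibniz/Sobolev computations needed are essentially identical to those the paper tacitly uses in its Step 1. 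Two minor remarks: (i) with your gauge convention $g=e^{-u}$, the linearization of the gauge action on $A$ comes out as $+d_A u$, not $-d_A u$, so you should take $g=e^{u}$ to match $\Phi$ (or relabel $u\mapsto -u$ at the end); (ii) the contraction should be run in the norm $\nnorm{d_A u}_2+\nnorm{a}+\nnorm{\eta}_1$, which is only $A$-dependently equivalent to the ambient norm $\nnorm{u}_3+\nnorm{a}+\nnorm{\eta}_1$ (Lemma \ref{lemma: estimate on Omega^0}(2)); this is harmless for completeness and for the conclusion, since the lemma's bound is stated precisely in terms of $\nnorm{d_A u}_2$, but it is worth saying explicitly that the $A$-dependence is being confined to the change-of-norm and never enters the contraction constant.
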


\begin{proof}
Let $r_0=r_0(D)$ and $r_2=r_2(D)$ be two positive numbers which will be fixed later.
They will satisfy $0<r_0\ll r_2 < r_1$
We use a continuity method.
The crucial point is that by Lemma \ref{lemma: estimate on Omega^0} (1) 
\begin{equation} \label{eq: key estimate in continuity method}
 \norm{u}_{L^\infty(X)} \lesssim_D \nnorm{d_A u}_2 \quad (u\in L^{2,W}_3(\Omega^0(\ad E))).
\end{equation}

Set $B=A+b$ with $\nnorm{b}_{2}\leq r_0$.
We define $\mathcal{T} \subset [0,1]$ as the set of $0\leq t\leq 1$ such that there exists 
$(u_t,a_t,\eta_t)\in L^{2,W}_{3}(\Omega^0(\ad E))\oplus H^{1,W}_A\oplus L^{2,W}_{1}(\Omega^+(\ad E))$
satisfying 
\begin{equation} \label{eq: condition in continuity method}
 A+ tb = e^{u_t}(A+a_t+P_A(\eta_t)), \quad \nnorm{d_A u_t}_{2} + \nnorm{a_t}+\nnorm{\eta_t}_{1} < r_2.
\end{equation}
The origin $0$ is contained in $\mathcal{T}$. 
We will shows that $\mathcal{T}$ is closed and open.
Then $1\in \mathcal{T}$ and the proof is completed.

\textbf{Step 1}. We show that $\mathcal{T}$ is closed. Take $t\in \mathcal{T}$ and $(u_t,a_t,\eta_t)$ satisfying 
the above (\ref{eq: condition in continuity method}). We want to derive a priori bound.
\begin{equation*}
   \begin{split} 
   tb =& -d_A u_t+a_t+P_A\eta_t -(d_A e^{u_t})(e^{-u_t}-1) -d_A(e^{u_t}-1-u_t) \\
   &+ (e^{u_t}-1)(a_t+P_A\eta_t)e^{-u_t} + (a_t+P_A\eta_t)(e^{-u_t}-1).
   \end{split}
\end{equation*}
By (\ref{eq: key estimate in continuity method}) we get 
$\nnorm{-d_A u_t + a_t + P_A\eta_t}_{2} \lesssim_D r_0+r_2^2$.
By Lemma \ref{lemma: Hodge decomposition} (1), we can choose $r_0$ and $r_2$ so that 
\begin{equation} \label{eq: a priori bound}
 \nnorm{d_A u_t}_{2} + \nnorm{a_t} + \nnorm{\eta_t}_{1} \leq \frac{r_2}{2}.
\end{equation}
Then the rest of the argument is standard.
Suppose $\{t_i\}\subset \mathcal{T}$ is a sequence converging to $t_\infty\in [0,1]$.
Then by Lemma \ref{lemma: estimate on Omega^0} (2) the sequence $(u_{t_i}, a_{t_i}, \eta_{t_i})$ is bounded in 
$L^{2,W}_{3}\oplus H^{1,W}_A\oplus L^{2,W}_{1}$. So we can assume that it weakly converges to some 
$(u_{t_\infty}, a_{t_\infty},\eta_{t_\infty})$. From the above bound (\ref{eq: a priori bound}) we get 
\[ \nnorm{d_A u_{t_\infty}}_{2} + \nnorm{a_{t_\infty}} + \nnorm{\eta_{t_\infty}}_{1} \leq \frac{r_2}{2} < r_2.\]
Hence it satisfies (\ref{eq: condition in continuity method}) for $t=t_\infty$. Thus $t_\infty \in \mathcal{T}$.

\textbf{Step 2}. We show that $\mathcal{T}$ is open in $[0,1]$.
Take $t\in \mathcal{T}$. We want to show that $t$ is an inner point. Consider the map 
\begin{equation} \label{eq: implicit function theorem in continuity method}
   f:L^{2,W}_{3}(\Omega^0(\ad E))\oplus H^{1,W}_A\oplus L^{2,W}_{1}(\Omega^+(\ad E))
   \to L^{2,W}_{2}(\Omega^1(\ad E)) 
\end{equation}
defined by $f(u,a,\eta) = e^u(A+a+P_A\eta)-A$.
It is enough to prove that the derivative $(df)_{(0,a_t,\eta_t)}$ is an isomorphism.
\[ (df)_{(0,a_t,\eta_t)}(u,a,\eta) = -d_A u+a+P_A\eta - [a_t+P_A\eta_t,u].\]
Here it is convenient to consider that the left-hand-side of (\ref{eq: implicit function theorem in continuity method})
is endowed with the norm $\nnorm{d_A u}_2+\nnorm{a}+\nnorm{\eta}_1$.
By Lemma \ref{lemma: Hodge decomposition} the map $\Phi(u,a,\eta) := -d_A u+ a+P_A\eta$ is an isomorphism from 
$L^{2,W}_{3}\oplus H^{1,W}_A\oplus L^{2,W}_{1}$ to $L^{2,W}_{2}$ with
$\nnorm{d_A u}_2+\nnorm{a}+\nnorm{\eta}_1 \lesssim_D \nnorm{\Phi(u,a,\eta)}_2$.
By (\ref{eq: key estimate in continuity method}) and (\ref{eq: condition in continuity method}) 
\[ \nnorm{[a_t+P_A\eta_t,u]}_{2} \lesssim_D r_2 \nnorm{d_A u}_{2}.\]
So if $r_2$ is chosen sufficiently small, then the derivative $(df)_{(0,a_t,\eta_t)}$ is isomorphic.
\end{proof}

Then we can prove Proposition \ref{prop: quantitative deformation theory}.
Recall that for connections $B_1$ and $B_2$ on $E$ we defined 
\[ \dist_{L^\infty}([B_1],[B_2]) = \inf_{g:E\to E}\norm{g(B_1)-B_2}_{L^\infty(X)}.\]

\begin{proposition}[$=$ Proposition \ref{prop: quantitative deformation theory}]
There exists $C_0=C_0(D)>0$ such that for any $0<\varepsilon<1$
\[ \#_{\mathrm{sep}}(V_{r_0}(A), \dist_{L^\infty}, \varepsilon) \leq (C_0/\varepsilon)^{8c_2(A)+3}.\]
Here $r_0=r_0(D)$ is the positive constant introduced in Lemma \ref{lemma: continuity method}.
\end{proposition}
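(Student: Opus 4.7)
The plan is to parametrize $V_{r_0}(A)$ (up to a non-continuous choice of gauge representative) by the finite-dimensional Banach ball $\{a \in H^{1,W}_A \mid \nnorm{a} \leq r_1\}$, and then invoke the volume-counting bound of Example \ref{example: separated set of Banach ball} together with the index formula of Lemma \ref{lemma: index formula}. All the hard analytic work has already been packaged into Lemmas \ref{lemma: solving ASD equation}, \ref{lemma: index formula}, and \ref{lemma: continuity method}; the proof just needs to assemble them.

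More precisely, given $[B]\in V_{r_0}(A)$, choose a representative $B$ with $\nnorm{B-A}_{2}\leq r_0$. Lemma \ref{lemma: continuity method} supplies $(u,a,\eta)$ with $B = e^{u}(A+a+P_A\eta)$ and $\nnorm{a}+\nnorm{\eta}_{1}<r_{1}$. Since $B$ is ASD and $e^{u}$ is a gauge transformation, $A+a+P_{A}\eta$ is also ASD, so $F^{+}(A+a+P_{A}\eta)=0$. The uniqueness clause of Lemma \ref{lemma: solving ASD equation} (1) then forces $\eta=\eta_{a}$, and therefore $[B]=[A+\tilde{a}]$ with $\tilde a = a + P_A\eta_a$ and $a\in H^{1,W}_A$, $\nnorm{a}\leq r_{1}$. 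Choosing one such $a$ for each gauge class defines a (non-continuous) map
\[
 f: V_{r_0}(A)\longrightarrow \{a\in H^{1,W}_{A}\mid \nnorm{a}\leq r_{1}\}, \quad [B]\mapsto a.
\]

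By Lemma \ref{lemma: solving ASD equation} (2), for any $[B_{1}],[B_{2}]\in V_{r_{0}}(A)$,
\[
 \dist_{L^\infty}([B_{1}],[B_{2}])\leq \norm{\tilde{a}_{1}-\tilde{a}_{2}}_{L^{\infty}(X)}\leq C(D)\nnorm{f([B_{1}])-f([B_{2}])},
\]
so the hypothesis of Lemma \ref{lemma: separated set} is satisfied with $\delta = \varepsilon/C(D)$. This yields
\[
 \#_{\mathrm{sep}}(V_{r_{0}}(A),\dist_{L^\infty},\varepsilon)\leq \#_{\mathrm{sep}}\bigl(B_{r_{1}}(H^{1,W}_{A}),\nnorm{\cdot},\varepsilon/C(D)\bigr).
\]
Applying Example \ref{example: separated set of Banach ball} to the $(8c_{2}(A)+3)$-dimensional Banach space $H^{1,W}_{A}$ (using Lemma \ref{lemma: index formula} for the dimension), the right-hand side is at most
\[
 \left(\frac{\varepsilon+2C(D)r_{1}}{\varepsilon}\right)^{8c_{2}(A)+3}\leq \left(\frac{1+2C(D)r_{1}}{\varepsilon}\right)^{8c_{2}(A)+3}
\]
for $0<\varepsilon<1$, giving the desired bound with $C_{0}:=1+2C(D)r_{1}$.

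There is no real obstacle in this final step, since the heavy lifting (weighted Hodge theory, gauge fixing by the continuity method, quadratic solvability of the ASD equation, and the index computation) has already been done. The only thing to verify carefully is that $f$ is well-defined on all of $V_{r_{0}}(A)$ and that the Lipschitz constant in Lemma \ref{lemma: solving ASD equation} (2) depends only on $D$, both of which are direct from the cited lemmas.
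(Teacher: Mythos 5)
Your proposal is correct and follows essentially the same route as the paper: it uses Lemma \ref{lemma: continuity method} together with the uniqueness clause of Lemma \ref{lemma: solving ASD equation} (1) to build the map $f$ into $B_{r_1}(H^{1,W}_A)$, the Lipschitz bound of Lemma \ref{lemma: solving ASD equation} (2) to transfer separated sets via Lemma \ref{lemma: separated set}, and then Example \ref{example: separated set of Banach ball} with the index formula of Lemma \ref{lemma: index formula}. The only cosmetic difference is that the paper isolates steps (2)--(4) into an explicit Claim, while you inline them.
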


\begin{proof}
Set $B_{r_1}(H^{1,W}_A) = \{a\in H^{1,W}_A|\nnorm{a}\leq r_1\}$.

\begin{claim}
There exist $C_2=C_2(D)>0$ and a map $f:V_{r_0}(A)\to B_{r_1}(H^{1,W}_A)$ such that for any 
$[B_1],[B_2]\in V_{r_0}(A)$ 
\[ \dist_{L^\infty}([B_1],[B_2]) \leq C_2 \nnorm{f([B_1])-f([B_2])}.\]
\end{claim}
\begin{proof}
Take $[B]\in V_{r_0}(A)$. By Lemma \ref{lemma: continuity method} we can find 
$(a,\eta) \in H^{1,W}_A\oplus L^{2,W}_{1}(\Omega^+(\ad E))$ satisfying 
\[ [B]=[A+a+P_A\eta], \quad \nnorm{a} + \nnorm{\eta}_{1} < r_1.\]
Since $B$ is ASD, $F^+(A+a+P_A\eta)=0$. 
Then by Lemma \ref{lemma: solving ASD equation} (1) we have $\eta=\eta_a$ and 
$[B]=[A+\tilde{a}]$. We set $f([B])=a$.

Take $[B_1],[B_2]\in V_{r_0}(A)$ and set $a_1=f([B_1])$ and $a_2=f([B_2])$.
We have $[B_1]=[A+\tilde{a}_1]$ and $[B_2]=[A+\tilde{a}_2]$.
By Lemma \ref{lemma: solving ASD equation} (2) 
\[ \dist_{L^\infty}([B_1],[B_2]) \leq \norm{\tilde{a}_1-\tilde{a}_2}_{L^\infty(X)} 
   \lesssim_D \nnorm{a_1-a_2}.\]
\end{proof}

By Lemma \ref{lemma: separated set} and Example \ref{example: separated set of Banach ball}
\[ \#_{\mathrm{sep}}(V_{r_0}(A),\dist_{L^\infty},\varepsilon) \leq 
   \#_{\mathrm{sep}}(B_{r_1}(H^{1,W}_A),\nnorm{\cdot},\varepsilon/C_2)
   \leq \left(\frac{1+2r_1C_2}{\varepsilon}\right)^{\dim H^{1,W}_A}.\]
By Lemma \ref{lemma: index formula}, $\dim H^{1,W}_A = 8c_2(A)+3$.
Thus we get the conclusion.
\end{proof}

We have completed all the proofs of Theorem \ref{thm: main theorem}.

\begin{remark} \label{remark: more general case}
By the same argument we can prove the following more general result:
Let $\moduli\subset \moduli_d$ be an $\mathbb{R}$-invariant closed subset.
Then 
\[ \dim(\moduli:\mathbb{R}) \leq 8\sup_{[A]\in \moduli} \rho(A).\]
But we don't have any reasonable lower bound on the mean dimension for general $\moduli$.
\end{remark}

\vspace{10mm}

\address{ Masaki Tsukamoto \endgraf
Department of Mathematics, Kyoto University, Kyoto 606-8502, Japan}

\textit{E-mail address}: \texttt{tukamoto@math.kyoto-u.ac.jp}

\end{document}